\pdfoutput=1
\documentclass[11pt]{article}

\usepackage[T1]{fontenc}
\usepackage{lmodern}
\usepackage{amsmath,amssymb,amsthm}
\usepackage{mathtools}
\usepackage{microtype}
\usepackage[margin=3.05cm]{geometry}
\usepackage{booktabs,array}
\usepackage[hypcap=false]{caption}
\usepackage{tikz}
\usepackage[shortlabels]{enumitem}
\usepackage{xcolor}
\usepackage{xurl}
\usepackage[
  unicode=true,
  colorlinks=true,
  linkcolor=blue!50!black,
  citecolor=blue!50!black,
  urlcolor=blue!50!black
]{hyperref}
\hypersetup{
  pdftitle={A finiteness theorem for geodesic Leech wheels},
  pdfauthor={Junyeop Yim},
  pdfsubject={Geodesic Leech labelings of wheel graphs},
  pdfkeywords={geodesic Leech labeling; geodesic path number; non-geodesic Leech graph; wheel graph; Sidon set; additive basis; generating-function method}
}
\allowdisplaybreaks

\newtheorem{theorem}{Theorem}[section]
\newtheorem{proposition}[theorem]{Proposition}
\newtheorem{lemma}[theorem]{Lemma}
\newtheorem{corollary}[theorem]{Corollary}
\theoremstyle{definition}
\newtheorem{definition}[theorem]{Definition}
\theoremstyle{remark}
\newtheorem{remark}[theorem]{Remark}
\numberwithin{equation}{section}

\newcommand{\N}{\mathbb{N}}
\newcommand{\Z}{\mathbb{Z}}
\newcommand{\R}{\mathbb{R}}
\newcommand{\calE}{\mathcal{E}}
\newcommand{\wt}{\operatorname{wt}}
\newcommand{\dd}{\mathbin{\uplus}}
\DeclarePairedDelimiter{\set}{\{}{\}}
\DeclarePairedDelimiter{\abs}{\lvert}{\rvert}
\DeclarePairedDelimiter{\floor}{\lfloor}{\rfloor}
\DeclarePairedDelimiter{\ceil}{\lceil}{\rceil}
\newcommand{\cV}{\mathcal V}
\newcommand{\cX}{\mathcal X}
\newcommand{\cY}{\mathcal Y}
\newcommand{\Db}{\overline D}
\newcommand{\e}{\mathrm{e}}

\title{A finiteness theorem for geodesic Leech wheels}
\author{Junyeop Yim\\
\small Department of Applied Mathematics, Kongju National University,\\
\small Gongju, Republic of Korea\\
\small \texttt{junyeobe0315@smail.kongju.ac.kr}}
\date{}

\begin{document}
\maketitle

\begin{abstract}
Let $f$ be a labeling of the edges of a finite graph $G$ by positive integers, and let the weight of a path be the sum of the labels of its edges. The labeling is a \emph{geodesic Leech labeling} if the weights of the geodesics are exactly $1,2,\ldots,t_{gp}(G)$, each occurring once, where $t_{gp}(G)$ is the geodesic path number of $G$. Let $W_n$ be the wheel on $n$ vertices, a hub joined to an $(n-1)$-cycle.

Our main result is an upper bound: if $n\ge5$ and $W_n$ is geodesic Leech, then $n\le40$. The proof quantifies, via a finite Fourier kernel, the Sidon-type structure of the spoke labels, in which only the cyclically adjacent pairs are allowed as defects, and closes the last three cases with a six-variable Parseval argument. In the other direction, explicit labelings of $W_7,\ldots,W_{13}$, found by a computer search, answer in the negative a problem of Lakshmanan S. and Manattu, who had found labelings of $W_5$ and $W_6$ and expected every $W_n$ with $n\ge7$ to be a non-geodesic Leech graph. Writing $\calE$ for the set of $n\ge5$ for which $W_n$ is geodesic Leech, we obtain $\set{5,6,\ldots,13}\subseteq\calE\subseteq\set{5,6,\ldots,40}$.
\end{abstract}

\section{Introduction}\label{sec:intro}

Leech proposed the problem of labeling the edges of a tree so that the weights of all paths form an initial segment of the positive integers~\cite{Leech1975}. Only five Leech trees are known~\cite{OzenWangYalman2016}, and Taylor showed that a Leech tree on $N$ vertices can exist only if $N=k^2$ or $N=k^2+2$~\cite{Taylor1977}. Further nonexistence results are due to Sz\'ekely, Wang, and Zhang~\cite{SzekelyWangZhang2005}, who also conjectured that there are only finitely many Leech trees, to Varghese, Lakshmanan S., and Arumugam~\cite{VargheseEtAl2020}, and, more recently, to Luo and Yu~\cite{LuoYu2024}; see also Leach~\cite{Leach2014} for a modular variant. Varghese, Lakshmanan S., and Arumugam later extended the problem to general graphs and introduced geodesic Leech labelings, which consider only shortest paths instead of all paths~\cite{VargheseEtAl2022,VargheseLeechGraphs2024,VargheseEtAl2024}; see also Lakshmanan S. and Eldho~\cite{LakshmananEldho2024}. Recently, Lakshmanan S. and Manattu investigated several graph families, exhibited geodesic Leech labelings of the wheel graphs $W_5$ and $W_6$, and stated as an open problem their belief that every wheel $W_n$ with $n\ge7$ is a non-geodesic Leech graph~\cite{LakshmananManattu2025}. They noted the first open case: for $W_7$ they gave an \emph{almost} geodesic Leech labeling~\cite[Figure~4]{LakshmananManattu2025} and left open whether $W_7$ is geodesic Leech.

The main result of this paper is an explicit upper bound on the orders $n$ for
which $W_n$ can be geodesic Leech, a restriction on the order in the spirit of
Taylor's for Leech trees.
Throughout, $W_n$ denotes the wheel on $n$ vertices, a hub joined to a cycle of
length $n-1$; this is the convention of~\cite{LakshmananManattu2025}. It differs
from the convention $W_n=C_n+K_1$, on $n+1$ vertices, used in Gallian's
survey~\cite{Gallian}, in~\cite{LakshmananEldho2024}, and in much of the
labeling literature: the wheel called
$W_{40}$ here has $40$ vertices and a rim cycle of length $39$.

\begin{theorem}[Main Theorem]\label{thm:main}
If $n\ge5$ and $W_n$ is geodesic Leech, then $n\le40$.
\end{theorem}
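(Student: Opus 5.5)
We will first pin down the geodesic structure of $W_n$ for $n\ge5$. Write $m=n-1\ge4$ for the rim length, $s_1,\dots,s_m$ for the spoke labels and $r_1,\dots,r_m$ for the rim labels, indexed cyclically. Every pair of vertices is at distance at most $2$. The geodesics are of three kinds: the $2m$ single edges; the rim paths of length two through a rim vertex, whose weight is $r_{i-1}+r_i$; and the spoke-to-spoke paths $v_i$--hub--$v_j$ with $j\neq i\pm1$, whose weight is $s_i+s_j$. When $m=4$, antipodal rim vertices are also joined by two-edge rim paths, so these small cases need some bookkeeping, but they lie inside the allowed range anyway. The geodesic path number is then $t_{gp}=2m+m+\bigl(\binom m2-m\bigr)=\binom m2+2m$ for $m\ge5$. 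A geodesic Leech labeling therefore asks that all of the following be distinct and together fill $\{1,\dots,t_{gp}\}$: the spoke labels, the rim labels, the adjacent rim sums, and the nonadjacent spoke sums $s_i+s_j$.

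The key consequence concerns the spoke set $S=\{s_1,\dots,s_m\}$. All sums $s_i+s_j$ with $i,j$ nonadjacent are distinct and lie in $[1,T]$ with $T=t_{gp}\approx m^2/2$. So $S$ is a Sidon set except possibly for the $m$ cyclically adjacent pairs, which are allowed as defects. There is also an upper constraint: the pairwise sums $s_i+s_j$ with $i\ne j$ number $\binom m2$ and must, apart from the $m$ defects, be at most $T$. A genuine Sidon set of size $m$ in $[1,N]$ requires $N\gtrsim m^2/2$ for its sums to be distinct in $[2,2N]$. Here, though, nearly all $\binom m2$ sums, which are distinct, must lie in $[1,T]$ with $T=\binom m2+2m$. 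Up to $O(m)$ slack, the sumset $S\hat+S$ is thus almost a perfect difference/sum basis of an initial interval. The labels must be packed into $[1,T]$ with every integer covered exactly once, so the counts match, and rigidity becomes a problem of exact covering.

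To make this rigid, we use the generating function $F(x)=\sum_i x^{s_i}$ together with $R(x)$ for the rim labels and $Q(x)$ for the adjacent rim sums. The exact cover condition gives
\[
\tfrac12\bigl(F(x)^2-F(x^2)\bigr)-A(x)+F(x)+R(x)+Q(x)=\sum_{k=1}^{T}x^k,
\]
where $A(x)=\sum_i x^{s_i+s_{i+1}}$ is the defect term. We evaluate this on roots of unity $x=e^{2\pi i h/q}$ with a finite Fourier kernel, for instance a Fejér kernel of small order. The right side is then tiny. This forces $|F(x)|^2$ to be small at nonzero frequencies, with an error of size $O(m)$ coming from $F(x^2)$, $A$, $R$ and $Q$. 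Meanwhile $F(1)=m$, and all the $s_i$ lie in $[1,T]$ with $T\sim m^2/2$. A Parseval/large-sieve comparison then gives a contradiction: $\sum_h|F(e(h/q))|^2=q\cdot\#\{$pairs congruent mod $q\}\ge m^2$, whereas the identity bounds the same sum by roughly $q\cdot O(m)$. Optimizing the kernel yields an explicit inequality in $m$ that fails for all large $m$, and we will tune the constants until it fails for $m\ge 44$ or so.

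The main obstacle is pushing the threshold down to exactly $n\le40$. A single kernel bound will likely leave a gap of a few cases, say $n=41,42,43$. For these we plan a sharper finite argument. We track the six low-order quantities $F(\pm1)$, $F(\pm i)$ and the analogous values for $R$ and $Q$, and combine the exact identity at $x=-1$ and $x=\pm i$ with Parseval on $\mathbb Z/4$ and with parity counts of the labels. This gives a finite integer system, which we will show has no solution. This step is delicate, and we expect most of the casework to live there.
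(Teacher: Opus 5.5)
Your contradiction step does not work. Take $q$ larger than every label (say $q=T+1$). Then the spoke labels are distinct modulo $q$, and Parseval gives exactly $\sum_{h=0}^{q-1}\abs{F(\e^{2\pi ih/q})}^2=qm$. The term $h=0$ is $m^2$, and the remaining $qm-m^2$, spread over $q-1$ frequencies, averages about $m$. That is entirely consistent with the pointwise bound $\abs{F}^2=O(m)$ your identity yields (the paper gets $\abs{F_t}\le\sqrt{7m+3}$). Comparing $m^2$ with $q\cdot O(m)$ is no contradiction either: for $q\sim m^2/2$ the right side is of order $m^3$, and for small $q$ the term $h=0$ alone is $m^2$, not $O(m)$. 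Parseval sees only how $S$ is distributed cyclically, so it cannot produce a contradiction here. The obstruction is positional, and your proposal never uses it.

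The missing idea is this. Every nonadjacent spoke sum is at most $N$, so at most two spoke labels exceed $N/2$, and if two do they are rim-adjacent (Lemma~\ref{lem:large-spokes}). With $M=N+1$, all but at most two of the phases $2\pi a_i/M$ therefore lie in the half-circle $[0,\pi]$. The paper detects this concentration with Moser's kernel $K_R(x)=\frac\pi2\sin x+\sum_{r\le R}c_r\cos(2rx)$, which has mean $0$ but is at least $\rho_R\approx1$ on $[0,\pi]$. Summed over $\set{0}\cup S$, it gives $\Lambda\ge\rho_R(m+1)$ minus the phase losses of the large spokes, which is linear in $m$. Its Fourier expansion bounds $\Lambda$ by $(\frac\pi2+\rho_R)$ times the largest nontrivial Fourier coefficient, i.e.\ by $O(\sqrt m)$. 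What is needed is a kernel whose minimum on the half-circle exceeds its mean; a Fej\'er kernel has no such property and cannot play this role.

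Even with this mechanism, reaching $40$ is not a matter of tuning constants in one inequality. With the paper's kernel and the bound $\sqrt{7m+3}$, the bare comparison $\rho_R(m+1)-\pi h>(\frac\pi2+\rho_R)\sqrt{7m+3}$ excludes only $m\ge45$ when $h=0$ and only $m\ge57$ when $h=2$. That leaves many more than a few cases. The paper closes this range with three further ingredients:
\begin{enumerate}[(a)]
\item the phase-pair inequality (Lemma~\ref{lem:phase-pair}), which couples the first two frequencies through $F_1^2-F_2=2Q(\zeta)+2(C(\zeta)-1)$;
\item the localization of the large spokes (Lemmas~\ref{lem:two-large-position} and~\ref{lem:one-large-position}), which uses interval-occupancy bounds for sets that are Sidon apart from path edges to keep the phase losses well below the trivial $\pi$ per large spoke;
\item for the leftover cases $(m,h)=(40,1),(40,2),(41,2)$, Parseval applied not to the spokes but to the residual rim set $T$ modulo the odd $M\in\set{861,903}$. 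This limits how large $q_1,q_2,q_4,q_6,q_8,q_{12}$ and the tail can be simultaneously, and it feeds a concave majorant bounded by explicit tangent planes.
\end{enumerate}
Your endgame evaluates the identity only at $x=\pm1,\pm i$, so it sees nothing but the residues of the labels modulo $4$. These are parity-type conditions like those of Appendix~\ref{app:identities}. They are necessary conditions useful for pruning a search, but they carry no information about the size of the labels relative to $N$, which is where the obstruction lies. You give no reason why that finite system should be inconsistent at $m=40,41,42$, and none is apparent.
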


\noindent
Theorem~\ref{thm:main} is proved in Section~\ref{sec:boundary}, at the end of the
argument begun in Section~\ref{sec:strategy}.

The starting point of the proof is the observation that almost all two-element sums of the spoke labels must be pairwise distinct and must remain within a permitted interval. This is a Sidon-type structure in which only the cyclically adjacent pairs are allowed as defects. We develop a finitary version of the generating-function method of Moser~\cite{Moser1960} and of Moser, Pounder, and Riddell~\cite[Lemma~1]{MoserPounderRiddell1969}, in the form used by Pikhurko~\cite[\S4]{Pikhurko2006}, and combine it with the Parseval identity for the residual rim weights and the phase information of the adjacent sums. The essential steps are to handle the infinite range with three uniform inequalities and to close the remaining three boundary cases with a single six-variable Parseval argument. The final numerical comparisons are made between explicit rational numbers and square roots, together with the alternating Taylor inequality for the sine.

In the other direction, $W_n$ is geodesic Leech for $7\le n\le13$, contrary to the expectation expressed in~\cite{LakshmananManattu2025}: Section~\ref{sec:certificates} lists explicit labelings of $W_7,\ldots,W_{13}$, found by a computer search, whose verification is a finite computation (Theorem~\ref{thm:explicit}). Combining these with the main theorem, we obtain the following.

\begin{corollary}\label{cor:range}
Let
\[
  \calE=\set{n\ge5:W_n\text{ is geodesic Leech}}.
\]
Then
\[
  \set{5,6,\ldots,13}\subseteq\calE
  \subseteq\set{5,6,\ldots,40}.
\]
\end{corollary}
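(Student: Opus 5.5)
The corollary is a direct combination of two results, so the proof has two independent halves that are simply put side by side.

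For the upper inclusion $\calE\subseteq\set{5,6,\ldots,40}$, I take $n\in\calE$. By definition $n\ge5$ and $W_n$ is geodesic Leech. Theorem~\ref{thm:main} then gives $n\le40$. Since $n$ is an integer with $5\le n\le40$, it lies in $\set{5,\ldots,40}$.

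For the lower inclusion $\set{5,\ldots,13}\subseteq\calE$, I split into two ranges. For $n=5,6$, I invoke the geodesic Leech labelings of $W_5$ and $W_6$ exhibited by Lakshmanan S. and Manattu~\cite{LakshmananManattu2025}, under the same convention that $W_n$ has $n$ vertices and a rim of length $n-1$. For $7\le n\le13$, I invoke Theorem~\ref{thm:explicit} of Section~\ref{sec:certificates}, which lists explicit labelings of $W_7,\ldots,W_{13}$ and asserts that each is geodesic Leech.

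Checking that a listed labeling is geodesic Leech is a finite computation. One enumerates all geodesics of $W_n$: the edges, the length-$2$ paths between rim vertices at rim distance $\ge2$ (through the hub, or along the rim when the rim distance is exactly $2$), and so on. Then one confirms that their weights are exactly $1,\ldots,t_{gp}(W_n)$, each occurring once. In the paper this verification sits inside Theorem~\ref{thm:explicit}, so here I only cite it.

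The only real obstacle is bookkeeping. I must make sure the cited labelings of $W_5$ and $W_6$ use the paper's vertex-count convention rather than $C_n+K_1$, so that the indices match. With that checked, the two inclusions together give the displayed chain.
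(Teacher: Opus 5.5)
Your proof is correct and matches the paper's, which likewise combines Theorem~\ref{thm:main} for the upper inclusion with Theorem~\ref{thm:explicit} for the lower one. The only difference is that Theorem~\ref{thm:explicit} already covers $n=5,6$: Table~\ref{tab:certificates} transcribes the labelings of~\cite{LakshmananManattu2025} into the paper's convention. Your separate citation and convention check for those two cases is therefore already handled there.
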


\noindent
In particular $\calE$ is finite. This is the analogue for wheels of the
finiteness conjectured by Sz\'ekely, Wang, and Zhang~\cite{SzekelyWangZhang2005}
for Leech trees; whereas Taylor's restriction~\cite{Taylor1977} is a congruence
condition on the order, Corollary~\ref{cor:range} is an upper bound.

\begin{remark}\label{rem:not-sharp}
The value $40$ is the threshold of a single comparison, the one made in the case
of no large spoke (Section~\ref{ssec:h0}); the other estimates of the proof do
not enter it, and their constants have not been optimized. We do not claim that
$40$ is best possible. What the method does and does not give, and why lowering
the bound by the present method requires more than sharpening any one of the
estimates used here, is discussed in Section~\ref{sec:conclusion}.
\end{remark}

No monotonicity is known by which nonexistence at one order would imply nonexistence at all larger orders. The final classification problem is therefore the finite but nonmonotone problem of deciding each of $W_{14},\ldots,W_{40}$.

The paper is organized as follows. Section~\ref{sec:wheels} classifies the geodesics of wheels, and Section~\ref{sec:frame} proves a criterion that decomposes the problem into the choice of an admissible spoke frame and a residual cyclic completion. Sections~\ref{sec:strategy}--\ref{sec:boundary} form the proof of the upper bound: the proof strategy and the three cases according to the number of large spokes (Section~\ref{sec:strategy}); the finite Fourier kernel and the cyclic-defect interval-occupancy bound (Section~\ref{sec:fourier}); the support polynomial and the localization of the large spokes (Section~\ref{sec:support-localization}); the uniform inequalities eliminating the infinite range (Section~\ref{sec:uniform-fourier}); and the six-variable Parseval argument closing the remaining three boundary cases (Section~\ref{sec:boundary}). Section~\ref{sec:certificates} presents the explicit constructions for $W_7,\ldots,W_{13}$ and completes the proof of Corollary~\ref{cor:range}. Appendix~\ref{app:partitions} contains the three geodesic weight classes of each explicit construction, Appendix~\ref{app:identities} records the arithmetic constraints that underlie the search of Section~\ref{sec:certificates} and reduce the search space for the remaining cases $W_{14},\ldots,W_{40}$, Appendix~\ref{app:kappa-iteration} exhibits the fixed-point iteration behind Table~\ref{tab:kappa-maxima}, and Appendix~\ref{app:tangent} records the data behind the tangent planes of Section~\ref{sec:boundary}.

\section{Geodesics of wheel graphs}\label{sec:wheels}

Throughout, all graphs are finite, simple, undirected, and connected. For a positive integer $r$ we write $[r]=\{1,\ldots,r\}$. A shortest path between two distinct vertices is called a \emph{geodesic}. A path and its reverse are regarded as the same, but shortest paths with the same endpoints and different interior vertices or edges are counted separately. Let $t_{gp}(G)$ denote the number of geodesics of $G$; this quantity is the \emph{geodesic path number} of $G$~\cite{VargheseEtAl2022,LakshmananManattu2025}. The same parameter has recently been studied by Knor, Sedlar, \v{S}krekovski, and Zhang~\cite{KnorEtAl2026}, under the name geodesic subpath number and the notation $\mathrm{gpn}(G)$.

For an edge labeling $f:E(G)\to\N$ and a path $P$, we call
\[
  \wt_f(P)=\sum_{e\in E(P)}f(e)
\]
the weight of $P$. If the multiset of all geodesic weights is exactly $[t_{gp}(G)]$, then $f$ is called a \emph{geodesic Leech labeling}, and a graph admitting one is called a \emph{geodesic Leech graph}~\cite{VargheseEtAl2022}; we also say simply that $G$ \emph{is geodesic Leech}. A graph admitting no such labeling is called a \emph{non-geodesic Leech graph}~\cite{LakshmananManattu2025}.

Following~\cite{LakshmananManattu2025}, we define $W_n$ to be the wheel graph on $n$ vertices. Thus $W_{m+1}$ has a hub $u$ and rim vertices
\[
  v_0,v_1,\ldots,v_{m-1}
\]
in cyclic order, with all indices read in $\Z/m\Z$; we write $C_m$ for the rim cycle. We assume throughout that $m\ge4$, that is, $n\ge5$. In the rest of the paper we write
\[
  m=n-1
\]
for the length of the rim, so that $W_n=W_{m+1}$; Sections~\ref{sec:wheels}--\ref{sec:certificates} are carried out in the variable $m$, while the statements of Sections~\ref{sec:intro} and~\ref{sec:conclusion} are in $n$. For a labeling $f$ we set
\[
  a_i=f(uv_i),\qquad b_i=f(v_iv_{i+1}),
\]
and call these the spoke labels and the rim labels, respectively.

\begin{proposition}\label{prop:wheel-geodesics}
Every geodesic of $W_{m+1}$ belongs to exactly one of the following four classes.
\begin{align*}
&uv_i, &&\text{weight }a_i;\\
&v_iv_{i+1}, &&\text{weight }b_i;\\
&v_iv_{i+1}v_{i+2}, &&\text{weight }b_i+b_{i+1};\\
&v_i u v_j\quad(0\le i<j<m,\ \{i,j\}\notin E(C_m)),
&&\text{weight }a_i+a_j.
\end{align*}
Consequently
\[
  t_{gp}(W_{m+1})=\frac{m(m+3)}2.
\]
Equivalently, for $n\ge5$,
\[
  t_{gp}(W_n)=\frac{(n-1)(n+2)}2.
\]
\end{proposition}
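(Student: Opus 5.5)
The plan is to classify geodesics by their endpoints, using the distance structure of $W_{m+1}$. Since the hub $u$ is adjacent to every rim vertex, the diameter is at most $2$. So every geodesic has length $1$ or $2$, and it suffices to understand the unordered pairs of distinct vertices at distance $1$ and at distance $2$. For each such pair we list all shortest paths joining it.

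First, the pairs at distance $1$ are exactly the edges. Each adjacent pair has exactly one geodesic, namely the edge itself. This gives the $m$ spokes $uv_i$ (weight $a_i$) and the $m$ rim edges $v_iv_{i+1}$ (weight $b_i$); here $m\ge4$ (indeed $m\ge3$) guarantees that the rim edges are distinct and the graph is simple.

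Second, the pairs at distance $2$ are the nonadjacent pairs of rim vertices $\{v_i,v_j\}$, of which there are $\binom m2-m$. For each, I will enumerate the common neighbours, since the geodesics are exactly the paths $v_i w v_j$ with $w$ a common neighbour.
\begin{itemize}[nosep]
\item The hub $u$ is always a common neighbour, giving the path $v_iuv_j$ of weight $a_i+a_j$.
\item A rim vertex $v_k$ is a common neighbour iff $k\pm1$ both hit $\{i,j\}$, i.e. $j=i\pm2$ and $v_k$ is the vertex between them. If $m=4$, the pair $\{v_i,v_{i+2}\}$ has two rim common neighbours, $v_{i+1}$ and $v_{i+3}$.
\end{itemize}
To handle the paths $v_iv_{i+1}v_{i+2}$ uniformly, I index them by the middle vertex $v_{i+1}$, $i\in\Z/m\Z$. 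For $m\ge4$ these $m$ paths are pairwise distinct as paths. They are also distinct from the paths through $u$, so each geodesic lies in exactly one class. For $m=4$ the two paths between $v_0$ and $v_2$ are indexed by different middle vertices, so the indexing by $i$ still counts each path once.

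Counting then gives
\[
t_{gp}=m+m+m+\Bigl(\binom m2-m\Bigr)=\frac{m(m+3)}2,
\]
and substituting $m=n-1$ yields $(n-1)(n+2)/2$.

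The only delicate step is the case $m=4$. There, rim paths of length two between the same endpoints must be counted separately, and one must check that no two of them coincide, using the paper's convention that a path and its reverse are identified. I would verify this directly by listing the four paths $v_iv_{i+1}v_{i+2}$ for $i\in\Z/4\Z$ and noting that their middle vertices differ.
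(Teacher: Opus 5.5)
Your proposal is correct and takes essentially the same route as the paper. Both arguments use the hub to get diameter $2$, run a finite case analysis of the geodesics of length $1$ and $2$, and count $3m+\binom m2-m=m(m+3)/2$. The only difference is bookkeeping: you enumerate length-$2$ geodesics by endpoint pair and common neighbour, whereas the paper enumerates length-$2$ paths by shape and rules out the mixed spoke--rim paths directly. Your version makes the two rim geodesics between $v_i$ and $v_{i+2}$ at $m=4$ explicit, and it excludes the mixed paths automatically, since every pair containing $u$ is at distance $1$.
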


\begin{proof}
The count $t_{gp}(W_n)=(n-1)(n+2)/2$ is due to Lakshmanan S. and
Manattu~\cite[Theorem~3.1]{LakshmananManattu2025}; we reproduce the argument
because the classification of the geodesics into the four classes above, and not
only their number, is used throughout what follows.
The wheel has diameter 2. The geodesics of length 1 are the $m$ spokes and the $m$ rim edges. A path of length 2 formed by two consecutive rim edges is a geodesic, since its two endpoints are nonadjacent; this is where the standing hypothesis $m\ge4$ is used, for it guarantees that $v_i$ and $v_{i+2}$ are distinct and nonadjacent, whereas at $m=3$ the wheel is $K_4$ and has diameter 1. A path of length 2 through the hub is a geodesic only when its two rim endpoints are nonadjacent. A mixed path traversing one spoke and one rim edge in succession is not a geodesic, since its endpoints are already adjacent via a spoke. This exhausts all cases.

The first three classes contribute $m$ geodesics each, and the last contributes
\[
  \binom m2-m=\frac{m(m-3)}2.
\]
Summing gives the stated formula.
\end{proof}

We call the geodesics of the fourth class, the two-edge geodesics through the hub, \emph{hub-type} geodesics. Now set
\[
  N=\frac{m(m+3)}2.
\]
By Proposition~\ref{prop:wheel-geodesics}, the geodesic Leech condition is equivalent to
\begin{equation}\label{eq:wheel-master}
 \set{a_i}
 \dd \set{b_i}
 \dd \set{b_i+b_{i+1}}
 \dd \set{a_i+a_j:0\le i<j<m,\ \{i,j\}\notin E(C_m)}
 = [N],
\end{equation}
where each brace denotes the multiset of the indexed values and $\dd$ is multiset union; since the right-hand side is a set of $N$ distinct elements, \eqref{eq:wheel-master} asserts in particular that all the listed values are pairwise distinct.

\section{Spoke frames and residual cyclic completion}\label{sec:frame}

For a cyclic sequence $A=(a_0,\ldots,a_{m-1})$ define
\[
  \Phi(A)=
  \set{a_i:0\le i<m}
  \cup
  \set{a_i+a_j:0\le i<j<m,\ \{i,j\}\notin E(C_m)}.
\]

\begin{definition}
We call $A$ an \emph{admissible spoke frame}, or simply an \emph{admissible frame}, if all the indexed values
\[
  a_i,
  \qquad
  a_i+a_j\quad(0\le i<j<m,\ \{i,j\}\notin E(C_m))
\]
lie in $[N]$ and are pairwise distinct.
\end{definition}

An admissible frame satisfies
\[
  \abs{\Phi(A)}=m+\frac{m(m-3)}2=\frac{m(m-1)}2.
\]
Hence the residual set
\[
  T(A)=[N]\setminus\Phi(A)
\]
has size exactly $2m$.

For an admissible spoke frame $A$, a cyclic sequence $B=(b_0,\ldots,b_{m-1})$ of positive integers is called a \emph{residual cyclic completion} of $A$ if
\begin{equation}\label{eq:completion}
  T(A)=\set{b_i:i\in\Z/m\Z}
  \dd
  \set{b_i+b_{i+1}:i\in\Z/m\Z},
\end{equation}
where, as in \eqref{eq:wheel-master}, $\dd$ denotes multiset union. Since $T(A)$ is a set of $2m$ distinct integers, \eqref{eq:completion} asserts in particular that the $m$ values $b_i$ and the $m$ values $b_i+b_{i+1}$ are pairwise distinct; the distinctness of the $b_i$ is therefore automatic and is not imposed as a hypothesis.

\begin{proposition}[Frame-completion criterion]\label{thm:frame-completion}
The wheel $W_{m+1}$ is geodesic Leech if and only if some admissible spoke frame admits a residual cyclic completion.
\end{proposition}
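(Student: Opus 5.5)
The plan is to prove both directions directly from the reformulation \eqref{eq:wheel-master} of the geodesic Leech condition, which rests on Proposition~\ref{prop:wheel-geodesics}. Throughout, every multiset is read as a multiset of indexed values. The key bookkeeping fact is that the four multisets in \eqref{eq:wheel-master} split into two groups. The spoke part $\set{a_i}\dd\set{a_i+a_j}$ has exactly the indexed values that define admissibility and $\Phi(A)$. The rim part $\set{b_i}\dd\set{b_i+b_{i+1}}$ is exactly the right-hand side of \eqref{eq:completion}.

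\emph{Forward direction.} Let $f$ be a geodesic Leech labeling, and set $A=(a_i)$ and $B=(b_i)$. By \eqref{eq:wheel-master}, the multiset union of all $N$ listed values equals the set $[N]$. Hence every listed value lies in $[N]$, and the values are pairwise distinct. In particular the spoke values $a_i$ and $a_i+a_j$ (for nonadjacent $i,j$) are in $[N]$ and pairwise distinct, so $A$ is admissible. Their underlying set is $\Phi(A)$, of size $m(m-1)/2$. Removing these values from both sides of \eqref{eq:wheel-master} leaves the multiset $\set{b_i}\dd\set{b_i+b_{i+1}}$ equal to $[N]\setminus\Phi(A)=T(A)$. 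This is \eqref{eq:completion}, and the $b_i$ are positive integers because $f$ takes values in $\N$. So $B$ is a residual cyclic completion of $A$.

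\emph{Reverse direction.} Let $A$ be admissible and let $B$ be a residual cyclic completion. Define $f(uv_i)=a_i$ and $f(v_iv_{i+1})=b_i$, which gives positive integer labels. Admissibility says that the spoke multiset is a set, namely $\Phi(A)\subseteq[N]$, with no repetitions. Equation \eqref{eq:completion} says that the rim multiset equals $T(A)=[N]\setminus\Phi(A)$, again without repetition. These two sets are disjoint and their union is $[N]$, so the multiset union in \eqref{eq:wheel-master} equals $[N]$. By Proposition~\ref{prop:wheel-geodesics}, $f$ is therefore a geodesic Leech labeling.

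There is no real obstacle here; the argument is pure bookkeeping. The only point needing care is to treat each side as a multiset of indexed values, not as a set. That way, distinctness is inherited from the fact that $[N]$ has no repetitions, and a count such as $\abs{\Phi(A)}=m(m-1)/2$ is never tacitly assumed in the wrong direction. One should also note that admissibility requires $a_i\ge1$, which holds because $f$ maps into $\N$.
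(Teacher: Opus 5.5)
Your proof is correct and takes the same approach as the paper. In both directions you split the four geodesic classes of Proposition~\ref{prop:wheel-geodesics}, via \eqref{eq:wheel-master}, into the spoke part $\Phi(A)$ and the rim part $T(A)$; you simply spell out the multiset bookkeeping that the paper's short proof leaves implicit.
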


\begin{proof}
Let $A$ be an admissible spoke frame and $B$ a residual cyclic completion of $A$. The set $\Phi(A)$ consists exactly of the weights of the spokes themselves and of the length-2 geodesics through the hub, and \eqref{eq:completion} says that the remaining $2m$ numbers are partitioned into the rim edge labels and the sums of consecutive pairs of rim edges. These four classes agree exactly with the four classes of Proposition~\ref{prop:wheel-geodesics}, so the labeling with spoke labels $A$ and rim labels $B$ is geodesic Leech. Conversely, extracting the spoke labels from any geodesic Leech wheel yields an admissible frame, and the rim labels form a residual cyclic completion of it.
\end{proof}

Proposition~\ref{thm:frame-completion} decomposes the problem into two successive steps: the choice of an admissible spoke frame, and the choice of a residual cyclic completion of it. Every geodesic Leech labeling of $W_{m+1}$ arises in this way. This decomposition organizes both the proof of the upper bound, which begins in the next section, and the search behind the constructions of Section~\ref{sec:certificates}.

\section{Strategy of the upper-bound proof and large spokes}\label{sec:strategy}

The strategy of the proof of Theorem~\ref{thm:main} is summarized as follows. By \eqref{eq:wheel-master}, the sums of spoke pairs nonadjacent on the rim must all be distinct, so the set of spoke labels lies in the interval $[1,N]$ as a Sidon-type set in which only the $m$ cyclically adjacent pairs are allowed as defects. Recall that a \emph{Sidon set} is a set of integers whose pairwise sums are all distinct. A Sidon subset of $[N]$ has at most $\sqrt N+O(N^{1/4})$ elements by Erd\H os and Tur\'an~\cite{ErdosTuran1941}, and at most $\sqrt N+N^{1/4}+1$ elements for every $N$ by Lindstr\"om~\cite{Lindstrom1969}; see Balogh, F\"uredi, and Roy~\cite{BaloghFurediRoy2023} for a recent improvement and O'Bryant~\cite{OBryant2004} for a survey. What is needed below is a quantitative bound of this kind for sets with $m$ prescribed defects, valid at each finite $N$. The finite Fourier kernel of Section~\ref{sec:fourier} produces a gap for such sets, giving a lower bound linear in the number of elements and an upper bound of order $\sqrt m$ for the Fourier coefficients; the cyclic-defect interval-occupancy bound of the same section, together with the geodesic support polynomial of Section~\ref{sec:support-localization}, localizes the large spokes, showing that each exceeds $N/2$ by at most $3m/2$, and thereby extends this gap to all cases. The uniform inequalities of Section~\ref{sec:uniform-fourier} eliminate every $m\ge40$ except three boundary cases, and the six-variable Parseval argument of Section~\ref{sec:boundary} closes the remaining three.

Every numerical comparison in this paper reduces to inequalities between
explicit rational numbers and square roots, verified by squaring both sides,
together with rational Taylor bounds for the sine and rational brackets for
$\pi$. Two notations for decimals are used, with different meanings. A decimal
followed by $\ldots$ denotes the exact value truncated at the last digit shown,
so that $u_{40}=16.8226\ldots$ asserts $16.8226\le u_{40}<16.8227$. A decimal
carrying an inequality sign is an exact rational bound, so that
$\mathcal U_0(40)<39.943$ asserts exactly that. Where a displayed value is
instead rounded, as in Tables~\ref{tab:app-values}
and~\ref{tab:app-gradients}, this is stated at that point.

The first step is an elementary observation limiting the number of large spokes.

\begin{lemma}[Large spokes]\label{lem:large-spokes}
At most two spoke labels exceed $\floor{N/2}$; if two do, then the corresponding spokes have adjacent rim endpoints.
\end{lemma}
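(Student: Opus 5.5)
The argument uses only the master condition \eqref{eq:wheel-master}: every hub-type weight $a_i+a_j$ with $\{i,j\}\notin E(C_m)$ lies in $[N]$. If two spoke labels $a_i,a_j$ with $i\ne j$ both exceed $\floor{N/2}$, then each is at least $\floor{N/2}+1$. Hence
\[
a_i+a_j\ge 2\floor{N/2}+2\ge N+1 ,
\]
using $2\floor{N/2}\ge N-1$. So $a_i+a_j\notin[N]$, and the pair $\{i,j\}$ cannot index a hub-type geodesic. Since $m\ge4$ and $i\ne j$, the only way the pair escapes the fourth class of Proposition~\ref{prop:wheel-geodesics} is $\{i,j\}\in E(C_m)$. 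In other words, any two large spokes have adjacent rim endpoints.

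From this pairwise statement the count follows. Let $L$ be the set of indices $i$ with $a_i>\floor{N/2}$. Any two elements of $L$ are cyclically adjacent in $\Z/m\Z$. Suppose $\abs{L}\ge3$. Then $L$ contains three indices that are pairwise adjacent, so they span a triangle in $C_m$. But $C_m$ with $m\ge4$ is triangle-free: if $j=i\pm1$ and $k=i\pm1$ with $k\ne j$, then $k=j\pm2$, and $j,k$ are nonadjacent because $m\ge4$. This contradiction gives $\abs{L}\le2$. If $\abs{L}=2$, the two large spokes have adjacent rim endpoints by the first paragraph.

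There is no real obstacle. The only points needing care are the floor inequality $2\floor{N/2}+2>N$, which holds for both parities of $N$, and the use of $m\ge4$, which guarantees that the rim has no triangles and that nonadjacent pairs genuinely give geodesics.
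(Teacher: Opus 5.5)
Your proof is correct and follows essentially the same route as the paper: two spokes exceeding $\floor{N/2}$ would have sum at least $2\floor{N/2}+2>N$, so they must be rim-adjacent, and three pairwise-adjacent rim vertices cannot exist because $C_m$ is triangle-free for $m\ge4$. The only difference is the order: you prove the pairwise statement first and then use triangle-freeness, whereas the paper takes three large spokes, picks a nonadjacent pair among them, and then applies the same sum argument to the two-spoke case.
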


\begin{proof}
Among any three vertices of a cycle with $m\ge4$ there are two nonadjacent ones. If three spokes exceeded $\floor{N/2}$, then two nonadjacent ones among them would have sum at least
\[
  2\bigl(\floor{N/2}+1\bigr)>N,
\]
exceeding the permitted range of hub-type geodesics; the inequality holds for both parities of $N$, giving $N+2>N$ when $N$ is even and $N+1>N$ when $N$ is odd. The same argument shows that two spokes exceeding $\floor{N/2}$ cannot be nonadjacent.
\end{proof}

A spoke label exceeding $\floor{N/2}$ will be called a \emph{large spoke}.
Since the labels are integers, $a_i>\floor{N/2}$ is the same condition as
$a_i>N/2$ for both parities of $N$, and we use the two interchangeably. From now
on we write $h$ for the number of large spokes; by
Lemma~\ref{lem:large-spokes}, $h\in\{0,1,2\}$. The proof splits into these three
cases, and the way they are distributed over Sections~\ref{sec:uniform-fourier}
and~\ref{sec:boundary} is recorded in Table~\ref{tab:case-structure}.

\begin{table}[ht]
\centering
\caption{The structure of the proof of Theorem~\ref{thm:main}. Each row is one
value of $h$; the third column names the comparison that eliminates the
infinite range, and the last column the finitely many cases it
leaves.}\label{tab:case-structure}
\begin{tabular}{@{}clll@{}}
\toprule
$h$ & uniform range excluded & governing comparison & boundary cases left\\
\midrule
$0$ & $m\ge40$ (\S\ref{ssec:h0})
    & $\mathcal L_0>\mathcal U_0$
    & ---\\
$1$ & $m\ge41$ (\S\ref{ssec:h1})
    & $\mathcal L_1>\mathcal U_0$
    & $(40,1)$\\
$2$ & $m\ge42$ (\S\ref{ssec:h2})
    & $\mathcal L_2>\mathcal U_2$
    & $(40,2)$, $(41,2)$\\
\bottomrule
\end{tabular}
\end{table}

The three boundary cases $(m,h)$ in the last column are closed in
Section~\ref{sec:boundary}, and together with
Proposition~\ref{prop:uniform-ranges} this leaves $m\le39$, that is, $n\le40$.
The lower bounds $\mathcal L_1$ and $\mathcal L_2$, and the upper bound
$\mathcal U_2$, are the ones that require the localization of the large spokes
in Section~\ref{sec:support-localization}; the case $h=0$ uses only the kernel
of Section~\ref{sec:fourier} and the phase-pair inequality of
Section~\ref{sec:uniform-fourier}.

\section{The finite Fourier kernel and cyclic-defect interval occupancy}\label{sec:fourier}

In this section we prepare a finite-length version of the kernel underlying the generating-function method of Moser~\cite{Moser1960} and of Moser, Pounder, and Riddell~\cite[Lemma~1]{MoserPounderRiddell1969}, as used by Pikhurko~\cite[\S4]{Pikhurko2006}, and apply it to the situation where only sums of distinct element pairs are counted, deriving a cyclic-defect interval-occupancy upper bound. This bound expresses quantitatively the fact that the only defects permitted among the spoke sums are the adjacent pairs of a single cycle.

\subsection{The finite Fourier kernel}

For $r\ge1$ let
\[
  c_r=\frac{2}{4r^2-1}
  =\frac1{2r-1}-\frac1{2r+1}.
\]
From the Fourier series of $\abs{\sin x}$, which appears in this form in~\cite[p.~400]{MoserPounderRiddell1969} and is used in this way by Pikhurko~\cite[\S4]{Pikhurko2006}, we obtain
\begin{equation}\label{eq:fourier-identity}
  \frac\pi2\sin x+
  \sum_{r=1}^{\infty}c_r\cos(2rx)
  =
  \begin{cases}
    1,&0\le x\le\pi,\\
    1+\pi\sin x,&\pi\le x\le2\pi,
  \end{cases}
\end{equation}
where the right-hand side is extended $2\pi$-periodically. For an integer $R\ge1$ define the finite kernel
\[
  K_R(x)=\frac\pi2\sin x+\sum_{r=1}^{R}c_r\cos(2rx).
\]
Telescoping gives
\begin{equation}\label{eq:coeff-sums}
  \rho_R:=\sum_{r=1}^{R}c_r=1-\frac1{2R+1},
  \qquad
  \sum_{r=R+1}^{\infty}c_r=\frac1{2R+1}.
\end{equation}

\begin{lemma}[Kernel lower bound]\label{lem:kernel}
For all $R\ge1$,
\[
  K_R(x)\ge\rho_R\quad(0\le x\le\pi),
  \qquad
  K_R(x)\ge\rho_R+\pi\sin x\quad(\pi\le x\le2\pi).
\]
\end{lemma}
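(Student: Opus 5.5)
The plan is to write $K_R$ as the full series in \eqref{eq:fourier-identity} minus its tail, and to bound the tail using the sizes of the coefficients. Set
\[
  F(x)=\frac\pi2\sin x+\sum_{r=1}^{\infty}c_r\cos(2rx),
\]
which converges absolutely and uniformly because $c_r=O(r^{-2})$. By \eqref{eq:fourier-identity} we have
\[
  K_R(x)=F(x)-\sum_{r=R+1}^{\infty}c_r\cos(2rx).
\]
Every $c_r$ is positive and $\cos(2rx)\le1$. Together with the tail sum in \eqref{eq:coeff-sums}, this gives
\[
  \sum_{r=R+1}^{\infty}c_r\cos(2rx)\le\sum_{r=R+1}^{\infty}c_r=\frac1{2R+1},
\]
and hence
\[
  K_R(x)\ge F(x)-\frac1{2R+1}
\]
for every real $x$.

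Next we substitute the closed form of $F$. On $[0,\pi]$ we have $F(x)=1$, so $K_R(x)\ge 1-\frac1{2R+1}=\rho_R$ by the first identity in \eqref{eq:coeff-sums}. On $[\pi,2\pi]$ we have $F(x)=1+\pi\sin x$, so $K_R(x)\ge\rho_R+\pi\sin x$. These are exactly the two inequalities in the statement.

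The only step that needs real care is the identity \eqref{eq:fourier-identity}, and in particular the claim that the series converges to the stated function at every point rather than just in $L^2$; the paper cites it from~\cite{MoserPounderRiddell1969}. To make the argument self-contained, I would derive it from the classical expansion
\[
  \abs{\sin x}=\frac2\pi-\frac4\pi\sum_{r\ge1}\frac{\cos 2rx}{4r^2-1}.
\]
This series converges absolutely, and since $\abs{\sin x}$ is continuous and piecewise smooth it converges pointwise to $\abs{\sin x}$. Multiplying by $\pi/2$ gives
\[
  \sum_{r\ge1}c_r\cos 2rx=1-\frac\pi2\abs{\sin x}.
\]
Adding $\frac\pi2\sin x$ yields $1$ where $\sin x\ge0$, that is on $[0,\pi]$, and $1+\pi\sin x$ where $\sin x\le0$, that is on $[\pi,2\pi]$. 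With this identity available, the rest of the argument is the one-line tail estimate above.
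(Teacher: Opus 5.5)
Your proof is correct and matches the paper's argument: subtract the tail $\sum_{r>R}c_r\cos(2rx)$ from the values $1$ and $1+\pi\sin x$ of the full series on the two intervals, and bound that tail by $\sum_{r>R}c_r=1/(2R+1)=1-\rho_R$. Your derivation of \eqref{eq:fourier-identity} from the classical expansion of $\abs{\sin x}$ is a correct self-contained supplement; the paper only cites that identity.
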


\begin{proof}
By \eqref{eq:coeff-sums}, the tail $\sum_{r>R}c_r\cos(2rx)$ has absolute value at most $1-\rho_R$. Subtracting this tail from the values $1$ and $1+\pi\sin x$ of the infinite series on the two intervals of \eqref{eq:fourier-identity} yields the stated lower bounds.
\end{proof}

When the kernel is summed over the phases of a set, the resulting sum is bounded below linearly in the number of elements and above by a common bound on the Fourier coefficients used. The following is the form used in this paper.

\begin{lemma}[Kernel sum inequality]\label{lem:kernel-sum}
Let $M\ge3$, $R=\floor{(M-1)/2}$, and $\rho=\rho_R$, so that $R\ge1$ and the kernel $K_R$ of Lemma~\ref{lem:kernel} is defined. Suppose $S\subseteq\{0,1,\ldots,M-1\}$ has $\abs S$ elements, of which $h$ exceed $R$. With $\zeta=\e^{2\pi i/M}$ and $F(z)=\sum_{a\in S}z^a$, if some $U>0$ satisfies $\abs{F(\zeta^t)}\le U$ for all $t\in\{1\}\cup\{2,4,6,\ldots,2R\}$, then
\begin{equation}\label{eq:finite-sum-lower}
  \left(\frac\pi2+\rho\right)U
  \ge
  \sum_{a\in S}K_R\!\left(\frac{2\pi a}{M}\right)
  \ge\rho\abs S-\pi h.
\end{equation}
\end{lemma}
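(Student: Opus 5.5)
We prove the two inequalities of \eqref{eq:finite-sum-lower} separately. The right-hand inequality comes pointwise from Lemma~\ref{lem:kernel}. The left-hand inequality comes from expanding $K_R$ into exponentials and summing over $S$.

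\emph{Lower bound.} Fix $a\in S$ and put $x=2\pi a/M\in[0,2\pi)$.

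If $a\le M/2$, then $x\in[0,\pi]$, and Lemma~\ref{lem:kernel} gives $K_R(x)\ge\rho$.

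If $a>M/2$, then $x\in(\pi,2\pi)$, and Lemma~\ref{lem:kernel} gives $K_R(x)\ge\rho+\pi\sin x\ge\rho-\pi$.

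It remains to relate the threshold $M/2$ to $R=\floor{(M-1)/2}$. Every integer $a>M/2$ satisfies $a\ge\floor{M/2}+1\ge R+1$, so $a>R$. Hence the elements with $x\in(\pi,2\pi)$ are among the $h$ elements exceeding $R$.

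There is one boundary case. When $M$ is even, $a=M/2=R+1$ exceeds $R$ but has $x=\pi$, where $\sin x=0$. For this element the bound $K_R\ge\rho\ge\rho-\pi$ is still fine.

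So each element exceeding $R$ contributes at least $\rho-\pi$, and every other element contributes at least $\rho$. Summing gives $\sum_{a\in S}K_R(2\pi a/M)\ge\rho\abs S-\pi h$. This is the careful step: we are only allowed to subtract $\pi$ for the $h$ large elements, and the case analysis above shows that is enough.

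\emph{Upper bound.} Write
\[
\sin x=\operatorname{Im}\e^{ix},\qquad \cos(2rx)=\operatorname{Re}\e^{2irx}.
\]
Then
\[
\sum_{a\in S}\sin(2\pi a/M)=\operatorname{Im}F(\zeta),\qquad \sum_{a\in S}\cos(4\pi ra/M)=\operatorname{Re}F(\zeta^{2r}).
\]
Bounding each real or imaginary part by the modulus gives
\[
\sum_{a\in S}K_R\!\left(\frac{2\pi a}{M}\right)\le\frac\pi2\abs{F(\zeta)}+\sum_{r=1}^R c_r\abs{F(\zeta^{2r})}.
\]
Each $c_r>0$, and the hypothesis bounds the modulus at $t=1$ and at $t=2r$ for $1\le r\le R$ by $U$. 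Therefore the right-hand side is at most $(\pi/2+\sum_{r\le R} c_r)U=(\pi/2+\rho)U$ by \eqref{eq:coeff-sums}.

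The hypotheses do not exclude a root $\zeta^{2r}=1$, in which case the corresponding $\abs{F}$ equals $\abs S$. Since $2R\le M-1$, no $2r$ with $r\le R$ is divisible by $M$, so this cannot happen and no further care is needed.
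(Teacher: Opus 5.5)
Your proof is correct and is essentially the paper's argument. For the upper bound you write the kernel sum as $\frac\pi2\operatorname{Im}F(\zeta)+\sum_r c_r\operatorname{Re}F(\zeta^{2r})$ and use $c_r>0$; for the lower bound you apply Lemma~\ref{lem:kernel} pointwise. The only cosmetic difference is that you split at $M/2$ and then compare with $R$, whereas the paper splits directly at $R$. It notes that $a>R$ already forces $x_a\in[\pi,2\pi)$, so the second bound of Lemma~\ref{lem:kernel} applies there, with $\sin\pi=0$ absorbing your even-$M$ boundary case.
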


\begin{proof}
For each $a\in S$ set $x_a=2\pi a/M$. The frequencies $1,2,4,\ldots,2R$ are all nonzero modulo $M$. Since $\sum_a\sin x_a=\operatorname{Im}F(\zeta)$ and $\sum_a\cos(2rx_a)=\operatorname{Re}F(\zeta^{2r})$, the left inequality follows from $c_r>0$ and the hypothesis. The right inequality follows from Lemma~\ref{lem:kernel}: if $a\le R$ then $0\le x_a\le\pi$, so $K_R(x_a)\ge\rho$; and if $a>R$ then $\pi\le x_a<2\pi$, so $K_R(x_a)\ge\rho+\pi\sin x_a\ge\rho-\pi$.
\end{proof}

\subsection{A finite Fourier inequality for distinct pair sums}

Fix an integer $w\ge2$ and a set $S\subseteq\{0,1,\ldots,w-1\}$, and let $\ell=|S|$. Let
\[
  s^{\times}(S)=
  \abs*{\set{x+y:x,y\in S,\ x<y}}
\]
denote the number of sums of distinct element pairs. Also write
\[
  M_w=2w-1,\qquad
  L_w(\ell)=\frac{2\rho_{w-1}\ell}{\pi+2\rho_{w-1}},
\]
where $\rho_{w-1}=1-1/M_w$ is the coefficient sum of \eqref{eq:coeff-sums}.

\begin{lemma}[Distinct-pair-sum upper bound]\label{lem:distinct-pair}
If $L_w(\ell)\ge\tfrac12$, then
\begin{equation}\label{eq:distinct-pair}
  s^{\times}(S)
  \le
  \frac{M_w+\binom{\ell}{2}}{2}
  -\frac{L_w(\ell)^2-L_w(\ell)}4.
\end{equation}
\end{lemma}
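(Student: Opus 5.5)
We follow the generating-function method of Moser: work modulo $M=M_w=2w-1$, bound $\abs{F}$ at nontrivial $M$-th roots of unity from above using the structure of the pair sums, and compare with the lower bound that Lemma~\ref{lem:kernel-sum} gives on the same Fourier coefficients.

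\textbf{Setup.} Put $\zeta=\e^{2\pi i/M}$, $F(z)=\sum_{a\in S}z^a$ and $C=\binom{\ell}{2}$. Since $S\subseteq\{0,\ldots,w-1\}$, every distinct-pair sum $x+y$ with $x<y$ lies in $\{1,\ldots,2w-3\}\subseteq\{0,\ldots,M-1\}$. Hence distinct sums stay distinct modulo $M$. Let $D$ be the set of these sums, $s=\abs D=s^{\times}(S)$, and $r(n)=\#\{(x,y):x<y,\ x+y=n\}$. Then
\[
  G(z)=\sum_n r(n)z^n=\tfrac12\bigl(F(z)^2-F(z^2)\bigr),
\]
and we decompose $G=D(z)+X(z)$, where $D(z)=\sum_{n\in D}z^n$ and $X$ has nonnegative coefficients with total mass $C-s$.

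\textbf{Upper bound on Fourier coefficients.} For $t\not\equiv0\pmod M$ we have $\sum_{n=0}^{M-1}\zeta^{tn}=0$. Therefore $D(\zeta^t)$ equals minus the sum over the complement of $D$ in $\Z/M\Z$, so $\abs{D(\zeta^t)}\le M-s$. Also $\abs{X(\zeta^t)}\le C-s$. From $F(\zeta^t)^2=2G(\zeta^t)+F(\zeta^{2t})$ we get
\[
  \abs{F(\zeta^t)}^2\le 2(M+C-2s)+\abs{F(\zeta^{2t})}.
\]
Let $U=\max_{t\not\equiv0}\abs{F(\zeta^t)}$. Since $M$ is odd, $2t\not\equiv0$ whenever $t\not\equiv0$, so taking the maximum gives
\[
  U^2-U\le 2(M+C-2s).
\]

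\textbf{Lower bound from the kernel.} Apply Lemma~\ref{lem:kernel-sum} with this $M$. Then $R=\floor{(M-1)/2}=w-1$, so $\rho=\rho_{w-1}$. No element of $S$ exceeds $R$, so the $h$ of that lemma is $0$. The bound $U$ dominates $\abs{F(\zeta^t)}$ at all required frequencies, hence $(\pi/2+\rho)U\ge\rho\ell$, that is, $U\ge \rho\ell/(\pi/2+\rho)=L_w(\ell)$. Since $u\mapsto u^2-u$ is increasing for $u\ge\tfrac12$ and $L_w(\ell)\ge\tfrac12$ by hypothesis, we get $U^2-U\ge L_w(\ell)^2-L_w(\ell)$. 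Combining with the previous display gives $2(M+C-2s)\ge L_w(\ell)^2-L_w(\ell)$, which rearranges to \eqref{eq:distinct-pair}.

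\textbf{Main point to check.} The argument is routine once the modulus is chosen. The step needing care is the justification that working modulo $M=2w-1$ loses nothing: pair sums must not wrap around, and $M$ must be odd so that $\zeta^{2t}\ne1$. Also, the maximum defining $U$ must be taken over all nonzero frequencies, not only the set $\{1,2,4,\ldots,2R\}$, so that the term $\abs{F(\zeta^{2t})}$ is controlled by the same $U$.
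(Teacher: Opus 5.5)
Your proof is correct and is essentially the paper's argument. Your split $G=D+X$ is the same coefficient count as the paper's comparison of $G_{\times}$ with $1+z+\cdots+z^{M_w-1}$, and it gives the same bound $Z_{\times}=\binom{\ell}{2}+M_w-2s^{\times}(S)$. The one hypothesis of Lemma~\ref{lem:kernel-sum} you leave unchecked is $U>0$. The paper obtains it by noting that the nonzero polynomial $F$, of degree at most $w-1<M_w-1$, cannot vanish at all $M_w-1$ nontrivial $M_w$-th roots of unity.
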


\begin{proof}
Set
\[
  F(z)=\sum_{b\in S}z^b,
  \qquad
  G_{\times}(z)=\frac{F(z)^2-F(z^2)}2.
\]
The coefficients of $G_{\times}$ are the representation numbers by unordered pairs of distinct elements, and all its exponents lie in $\{0,1,\ldots,M_w-1\}$. Let $\xi=\e^{2\pi i/M_w}$. The polynomial $1+z+\cdots+z^{M_w-1}$ vanishes at the nontrivial $M_w$-th roots of unity, so $G_{\times}(\xi^t)$ is the value at $\xi^t$ of $G_{\times}(z)-(1+z+\cdots+z^{M_w-1})$, and the sum of the absolute values of the coefficients of this difference gives
\begin{equation}\label{eq:distinct-Z}
  \abs{G_{\times}(\xi^t)}
  \le Z_{\times}:=\binom{\ell}{2}+M_w-2s^{\times}(S)
  \qquad(t\ne0).
\end{equation}
Moreover, since
\[
  F(z)^2=2G_{\times}(z)+F(z^2),
\]
we obtain
\[
  \abs{F(\xi^t)}^2
  \le2Z_{\times}+\abs{F(\xi^{2t})}.
\]
Since $M_w$ is odd, the doubling map permutes the nontrivial frequencies. Hence, setting
\[
  U=\max_{t\ne0}\abs{F(\xi^t)},
\]
we obtain
\begin{equation}\label{eq:distinct-U}
  U^2\le2Z_{\times}+U.
\end{equation}

Apply Lemma~\ref{lem:kernel-sum} to the set $S$ with $M=M_w=2w-1\ge3$, so that $R=w-1$ and the coefficient sum is $\rho_{w-1}$; since every $b\in S$ is at most $w-1$, there are no elements in the upper half-interval, and $U$ bounds $\abs F$ at all nontrivial frequencies. The hypothesis $U>0$ of Lemma~\ref{lem:kernel-sum} holds, for $U=0$ would force the nonzero polynomial $F$, of degree at most $w-1<M_w-1$, to vanish at all $M_w-1$ nontrivial $M_w$-th roots of unity. Therefore \eqref{eq:finite-sum-lower} gives
\[
  \rho_{w-1}\ell\le\left(\frac\pi2+\rho_{w-1}\right)U,
\]
that is, $U\ge L_w(\ell)$. The function $u\mapsto u^2-u$ is increasing for $u\ge\tfrac12$, so \eqref{eq:distinct-U} yields
\[
  2Z_{\times}\ge L_w(\ell)^2-L_w(\ell).
\]
Substituting this into \eqref{eq:distinct-Z} gives \eqref{eq:distinct-pair}.
\end{proof}

\subsection{Cyclic-defect occupancy numbers}

Throughout, a pair of labeled vertices that is an edge of the underlying path or cycle is called a \emph{defect pair}, and any other pair is called a \emph{non-defect pair}. The terminology records the fact established in Section~\ref{sec:strategy}: the spoke labels of a geodesic Leech wheel would form a Sidon set were it not for the $m$ pairs that are edges of the rim cycle $C_m$, whose sums are exempt from the distinctness requirement because they are not weights of hub-type geodesics. Only the defect pairs are allowed to repeat a sum. Sets of this kind are quasi-Sidon in the sense of Erd\H os and Freud~\cite{ErdosFreud1991}, and the condition that all sums over non-defect pairs be distinct is that of an edge-sum distinguishing labeling, in the sense of Tuza~\cite{Tuza2017} and Bok and Jedli\v{c}kov\'a~\cite{BokJedlickova2021}, of the complement of the path or cycle, with labels drawn from an interval.

Suppose distinct integers $x_1,\ldots,x_s$ are attached to the vertices of a path on $s$ vertices, and suppose the sums over all non-defect pairs are pairwise distinct. Let $\mu_s(w)$ denote the maximum number of labels that can lie in an interval of $w$ consecutive integers, and set
\[
  \mu(w)=\sup_{s\ge1}\mu_s(w),
\]
which is finite because $\mu_s(w)\le w$ for every $s$, the labels being distinct integers of the interval. All the bounds below are bounds on $\mu(w)$, hence hold uniformly in $s$.

Suppose $\ell$ labels lie in such an interval. Among the $\ell$ selected vertices there are at most $\ell-1$ path edges, so at least
\[
  \binom{\ell}{2}-(\ell-1)=\frac{(\ell-1)(\ell-2)}2
\]
pairs are non-defect pairs, and their sums must all be distinct. By an elementary count, at most $\max(2w-3,0)$ distinct sums are possible, and Lemma~\ref{lem:distinct-pair} applies as well.

Two consequences of this count hold for every $w$, with no
appeal to the Fourier input, and we record them for use at small arguments.
First, the $\ell$ labels are distinct integers of the interval, so $\ell\le w$;
second, the $(\ell-1)(\ell-2)/2$ non-defect sums are distinct elements of an interval
of $2w-3$ integers, a constraint that is vacuous for $\ell\le2$, since the guaranteed
count $(\ell-1)(\ell-2)/2$ vanishes there. Hence
\begin{equation}\label{eq:rho-elementary}
  \mu(w)\le\varepsilon(w):=
  \max\set*{\ell\le w:\ \tfrac{(\ell-1)(\ell-2)}2\le\max(2w-3,0)},
\end{equation}
and in particular
\begin{equation}\label{eq:rho-small}
  \bigl(\varepsilon(1),\ldots,\varepsilon(9)\bigr)=(1,2,3,4,5,5,6,6,7).
\end{equation}
Finally, an interval of $w$ consecutive integers is contained in one of $w'\ge w$
consecutive integers, so
\begin{equation}\label{eq:rho-monotone}
  \mu(w)\le\mu(w')\qquad(w\le w').
\end{equation}

\begin{proposition}[Interval-occupancy bounds]\label{lem:rho-closed}
Let $w\ge10$ and set
\[
  \lambda_w=\frac{14(w-1)}{36w-25}.
\]
Then $\ell=\mu(w)$ satisfies
\begin{equation}\label{eq:rho-sharp}
  (1+\lambda_w^2)\ell^2-(5+\lambda_w)\ell+6-4w\le0.
\end{equation}
Relaxing $\lambda_w\ge\tfrac38$ in \eqref{eq:rho-sharp} gives
\begin{equation}\label{eq:rho-quadratic}
  73\ell^2-344\ell+384-256w\le0,
\end{equation}
and hence the closed form
\begin{equation}\label{eq:rho-closed}
  \mu(w)<\frac{15}{8}\sqrt w+\frac{12}{5}.
\end{equation}
Moreover \eqref{eq:rho-closed} holds for every $w\ge1$.
\end{proposition}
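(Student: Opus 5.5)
The plan is to feed the elementary count of non-defect sums into Lemma~\ref{lem:distinct-pair} and then replace the Fourier lower bound $L_w(\ell)$ by the linear quantity $\lambda_w\ell$. Let $w\ge10$ and let $\ell=\mu(w)$. Fix a configuration attaining it: $\ell$ labels $S$ lying in an interval of $w$ consecutive integers, which we translate to $\{0,\ldots,w-1\}$. Translation preserves which sums coincide, so the $(\ell-1)(\ell-2)/2$ non-defect sums are still pairwise distinct, and they are among the sums of distinct pairs of $S$. Hence
\[
s^\times(S)\ge\frac{(\ell-1)(\ell-2)}{2}.
\]

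\textbf{Step 1: a linear lower bound for $L_w(\ell)$.} We have $\rho_{w-1}=(2w-2)/(2w-1)$. Using $\pi<22/7$,
\[
\frac{2\rho_{w-1}}{\pi+2\rho_{w-1}}>\frac{14\rho_{w-1}}{22+14\rho_{w-1}}=\frac{28(w-1)}{22(2w-1)+28(w-1)}=\lambda_w .
\]
So $L_w(\ell)\ge\lambda_w\ell$. The map $w\mapsto\lambda_w$ is increasing, with $\lambda_{10}=126/335>3/8$. Therefore $\lambda_w\ge3/8$ for all $w\ge10$.

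\textbf{Step 2: the case $\ell\ge2$.} Here $\lambda_w\ell\ge3/4\ge\tfrac12$, so $L_w(\ell)\ge\tfrac12$ and Lemma~\ref{lem:distinct-pair} applies. Combine it with the lower bound on $s^\times(S)$ and multiply by $4$:
\[
2(\ell^2-3\ell+2)\le 2(2w-1)+\ell^2-\ell-\bigl(L^2-L\bigr),\qquad L=L_w(\ell).
\]
Rearranging gives $\ell^2-5\ell+6-4w+(L^2-L)\le0$. Since $u\mapsto u^2-u$ is increasing on $[\tfrac12,\infty)$ and $L\ge\lambda_w\ell\ge\tfrac12$, we may replace $L^2-L$ by $\lambda_w^2\ell^2-\lambda_w\ell$. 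This yields \eqref{eq:rho-sharp}.

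\textbf{Step 3: the case $\ell\le1$.} Here \eqref{eq:rho-sharp} holds directly, because $6-4w\le-34$ dominates the remaining terms.

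\textbf{Step 4: relaxing to \eqref{eq:rho-quadratic}.} The expression $\lambda^2\ell^2-\lambda\ell$ is increasing in $\lambda$ wherever $\lambda\ell\ge\tfrac12$. So we may lower $\lambda_w$ to $3/8$, which again satisfies $\tfrac38\ell\ge\tfrac12$ once $\ell\ge2$ (and $\ell\le1$ is trivial). Multiplying by $64$ gives $73\ell^2-344\ell+384-256w\le0$.

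\textbf{Step 5: the closed form for $w\ge10$.} Solving the quadratic gives
\[
\ell\le\frac{344+\sqrt{6208+74752w}}{146}.
\]
Here $344/146<2.357$, so it suffices to show
\[
\sqrt{6208+74752w}<146\cdot\tfrac{15}{8}\sqrt w+146\cdot0.043 .
\]
To check this, square the right-hand side. Its leading term is $273.75^2w>74939w$, which beats $74752w$. The cross term, roughly $2\cdot273.75\cdot6.2\sqrt w$, is already larger than $6208$ for $w\ge10$. The argument only needs exact rational comparisons of this kind, done after squaring.

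\textbf{Step 6: small $w$.} For $1\le w\le9$ use $\mu(w)\le\varepsilon(w)$ from \eqref{eq:rho-elementary} and the table \eqref{eq:rho-small}. Each entry must be checked against $\tfrac{15}{8}\sqrt w+\tfrac{12}{5}$. The tightest case is $w=9$, where $7<5.625+2.4$. The others, such as $w=6$ with $5<\tfrac{15}{8}\sqrt6+2.4$, follow from elementary square-root bounds.

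The main obstacle is bookkeeping rather than ideas. One point is the hypothesis $L_w(\ell)\ge\tfrac12$ of Lemma~\ref{lem:distinct-pair}, handled by splitting off $\ell\le1$. The other is the monotone replacement of $L$ by $\lambda\ell$. The numeric closure in Step 5 is routine once the squaring is done carefully.
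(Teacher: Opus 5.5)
Your proposal is correct and follows essentially the same route as the paper: combine Lemma~\ref{lem:distinct-pair} with $s^{\times}(S)\ge(\ell-1)(\ell-2)/2$, get $L_w(\ell)\ge\lambda_w\ell$ from $\pi<22/7$, use that $u\mapsto u^2-u$ is increasing on $[\tfrac12,\infty)$ twice, and fall back on the table of $\varepsilon(w)$ for $w\le9$. The only difference is cosmetic, in Step~5: you solve the quadratic explicitly, whereas the paper substitutes $\ell_0=\tfrac{15}{8}\sqrt w+\tfrac{12}{5}$ into \eqref{eq:rho-quadratic}, obtains $\tfrac{41}{64}w+12\sqrt w-\tfrac{528}{25}>0$, and notes that $\ell_0$ lies to the right of the vertex; the two checks are equivalent.
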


\begin{proof}
Let $\ell\le\mu_s(w)$; we may assume $\ell\ge2$, for if $\ell\le1$ then, since
$0<\lambda_w<1$, the left-hand side of \eqref{eq:rho-sharp} is at most
$6-4w<0$ and there is nothing to prove. Combine
Lemma~\ref{lem:distinct-pair}, applied to the set $S$ of the labels lying in the
interval (translated to start at $0$), with
\[
  \frac{(\ell-1)(\ell-2)}2\le s^{\times}(S).
\]
Clearing the denominators of \eqref{eq:distinct-pair}, this reads
\[
  2(\ell-1)(\ell-2)\le2M_w+\ell(\ell-1)-\bigl(L_w(\ell)^2-L_w(\ell)\bigr),
\]
that is, $\ell^2-5\ell+6+L_w(\ell)^2-L_w(\ell)\le4w$. Using $\pi<22/7$,
\[
  \frac{2\rho_{w-1}}{\pi+2\rho_{w-1}}
  \ge \frac{14(w-1)}{36w-25}=\lambda_w,
\]
so $L_w(\ell)\ge\lambda_w\ell\ge\tfrac12$; since $u\mapsto u^2-u$ is increasing for
$u\ge\tfrac12$, substituting $L_w(\ell)\ge\lambda_w\ell$ gives \eqref{eq:rho-sharp}.

For $w\ge10$ we have $\lambda_w\ge\tfrac38$. The two terms of
\eqref{eq:rho-sharp} that involve $\lambda_w$ group as
$(\lambda_w\ell)^2-\lambda_w\ell$, so that the substitution of $\tfrac38$ for
$\lambda_w$ is not termwise, the term $-(5+\lambda_w)\ell$ increasing when
$\lambda_w$ is lowered; but $\ell\ge2$ gives $\tfrac38\ell\ge\tfrac34>\tfrac12$,
and $u\mapsto u^2-u$ is increasing for $u\ge\tfrac12$, so
$(\lambda_w\ell)^2-\lambda_w\ell\ge\bigl(\tfrac38\ell\bigr)^2-\tfrac38\ell$. Hence
$\ell^2-5\ell+6+L_w(\ell)^2-L_w(\ell)\le4w$ also gives
$\bigl(1+\tfrac9{64}\bigr)\ell^2-\bigl(5+\tfrac38\bigr)\ell+6-4w\le0$, which upon
clearing denominators is \eqref{eq:rho-quadratic}. Finally, substituting
\[
  \ell_0=\frac{15}{8}\sqrt w+\frac{12}{5}
\]
into the left-hand side of \eqref{eq:rho-quadratic} yields
\[
  73\ell_0^2-344\ell_0+384-256w
  =\frac{41}{64}w+12\sqrt w-\frac{528}{25},
\]
which is increasing in $w$, negative at $w=1$ and $w=2$, and positive for every
$w\ge3$, hence for $w\ge10$. Since $\ell_0\ge\tfrac{15}8+\tfrac{12}5>\tfrac{172}{73}$,
the point $\ell_0$ lies to the right of the vertex of the parabola in
\eqref{eq:rho-quadratic}, so $\ell<\ell_0$, which is \eqref{eq:rho-closed} for
$w\ge10$. For $1\le w\le9$ the same
bound follows from \eqref{eq:rho-small}, since
$\bigl(\varepsilon(w)-\tfrac{12}5\bigr)^2<\tfrac{225}{64}w$ implies
$\varepsilon(w)-\tfrac{12}5<\tfrac{15}8\sqrt w$ whatever the sign of
$\varepsilon(w)-\tfrac{12}5$, a quantity that is negative exactly at $w=1$ and
$w=2$; the nine pairs
\[
  \Bigl(\varepsilon(w)-\tfrac{12}5\Bigr)^2
  \quad\text{versus}\quad
  \tfrac{225}{64}w
\]
compare as $1.96<3.51$, $0.16<7.03$, $0.36<10.54$, $2.56<14.06$, $6.76<17.57$,
$6.76<21.09$, $12.96<24.60$, $12.96<28.12$, $21.16<31.64$.
\end{proof}

The relaxation of \eqref{eq:rho-sharp} to \eqref{eq:rho-quadratic} is of no consequence in
the asymptotic range, but it is too weak near $w=110$, where it is exactly
the difference between $\mu(w)\le21$ and $\mu(w)\le22$. The boundary
values $m=40,41,42$ in Lemma~\ref{lem:two-large-position} and $m=40,41$ in
Lemma~\ref{lem:one-large-position} therefore use \eqref{eq:rho-sharp} directly;
only the ranges $m\ge43$ in the former and $m\ge42$ in the latter use
\eqref{eq:rho-closed}.

\begin{remark}\label{rem:pikhurko}
Proposition~\ref{lem:rho-closed} is a finite and fully explicit interval-occupancy bound
for sets that are Sidon apart from the edges of a path. It may be compared with the
asymptotic bound
\[
  \abs A\le\Bigl(\bigl(\tfrac14+(\pi+2)^{-2}\bigr)^{-1/2}+o(1)\Bigr)W^{1/2}
  =\bigl(1.863\ldots+o(1)\bigr)W^{1/2}
\]
of Pikhurko~\cite[Theorem~3]{Pikhurko2006} for quasi-Sidon subsets of an interval
of length $W$, in the sense of Erd\H os and Freud~\cite{ErdosFreud1991}. (We write
$W$ for the interval length, which plays the role of our $w$; the variable is
called $n$ in~\cite{Pikhurko2006}, which here denotes the order of the wheel.)

The sharp form \eqref{eq:rho-sharp} of the present bound has Pikhurko's constant
as its leading coefficient; the hypothesis of his theorem is satisfied by the sets
considered here, since at most $\ell-1=o(\ell^2)$ pairs are exempt from the
distinctness requirement. Its asymptotic content is $\ell\le\bigl(2/\sqrt{1+\lambda^2}\bigr)
\sqrt w\,(1+o(1))$, and letting $\rho_{w-1}\to1$ gives $\lambda\to2/(\pi+2)$ and
\[
  \frac2{\sqrt{1+4(\pi+2)^{-2}}}
  =\Bigl(\tfrac14+(\pi+2)^{-2}\Bigr)^{-1/2}
  =1.8639491169\ldots,
\]
which is Pikhurko's constant. The larger constant $\tfrac{15}8=1.875$ of
\eqref{eq:rho-closed} is the cost of two further relaxations: replacing
$\lambda_w$ by $\tfrac38$ in the passage from \eqref{eq:rho-sharp} to
\eqref{eq:rho-quadratic} costs $16/\sqrt{73}-1.86394\ldots=0.0087\ldots$, and
rounding $16/\sqrt{73}=1.87265\ldots$ up to the value $\tfrac{15}8$
costs a further $0.0023\ldots$; the estimate $\pi<22/7$ used to define $\lambda_w$
costs only $6.1\cdot10^{-5}$. What is needed below is not the size of the constant
but the validity of \eqref{eq:rho-closed} at every $w\ge1$, with no error term.
\end{remark}

\section{The geodesic support polynomial and the localization of large spokes}\label{sec:support-localization}

From this section on we assume that $W_{m+1}$ is geodesic Leech and set
\[
  N=\frac{m(m+3)}2,
  \qquad
  M=N+1=\frac{(m+1)(m+2)}2.
\]
The spoke labels in cyclic order are $a_0,\ldots,a_{m-1}$.

\subsection{The geodesic support polynomial}

Define
\[
  P(z)=\sum_i z^{a_i},
  \qquad
  F(z)=1+P(z),
\]
\[
  C(z)=\sum_i z^{a_i+a_{i+1}},
\]
\[
  Q(z)=1+P(z)+
  \sum_{\{i,j\}\notin E(C_m)}z^{a_i+a_j}.
\]
We call $C$ the \emph{adjacent-sum polynomial} and $Q$ the \emph{geodesic support polynomial}: its support consists of the weight $0$, all spoke labels, and all nonadjacent spoke sums, that is, of $0$ together with the weights of the two geodesic classes of Proposition~\ref{prop:wheel-geodesics} that involve the hub. These are pairwise distinct and lie in $\{0,1,\ldots,N\}$, so
\[
  |\operatorname{supp}Q|
  =1+m+\frac{m(m-3)}2=M-2m.
\]
These polynomials are related by the following identity.

\begin{lemma}[Geodesic support identity]\label{lem:support-identity}
We have
\begin{equation}\label{eq:support-identity}
  F(z)^2=2Q(z)+F(z^2)+2C(z)-2.
\end{equation}
\end{lemma}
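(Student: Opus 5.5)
The identity is a direct polynomial computation: expand $F(z)^2=(1+P(z))^2$ and sort the resulting monomials according to the pairs of indices they come from. I would write
\[
  F(z)^2=1+2P(z)+P(z)^2,
  \qquad
  P(z)^2=\sum_i z^{2a_i}+2\sum_{0\le i<j<m}z^{a_i+a_j},
\]
so that the cross terms run over unordered pairs $\{i,j\}$ of distinct indices. Then I would split the sum over unordered pairs according to whether $\{i,j\}$ is an edge of $C_m$. Since $m\ge4$, the $m$ rim edges $\{i,i+1\}$, $i\in\Z/m\Z$, are pairwise distinct unordered pairs. Hence the edge part is exactly $\sum_i z^{a_i+a_{i+1}}=C(z)$, and the remaining part is $\sum_{\{i,j\}\notin E(C_m)}z^{a_i+a_j}=Q(z)-1-P(z)$ by the definition of $Q$.

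Substituting these pieces, I get
\[
  F(z)^2=1+2P(z)+\sum_i z^{2a_i}+2C(z)+2\bigl(Q(z)-1-P(z)\bigr).
\]
The diagonal term is handled by $F(z^2)=1+\sum_i z^{2a_i}$, which gives $\sum_i z^{2a_i}=F(z^2)-1$. Collecting terms, the $2P(z)$ contributions cancel and the constants combine as $1-1-2=-2$. This yields $F(z)^2=2Q(z)+F(z^2)+2C(z)-2$, which is \eqref{eq:support-identity}.

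The only point needing care is bookkeeping rather than any real obstacle. The identity is purely formal, so I would not invoke distinctness of the labels or the admissibility of the frame; distinctness of the $a_i$ plays no role. What does matter is that each rim edge is counted once in $C$: this is where $m\ge4$ (indeed $m\ge3$) is used, and it is why the cross terms produce the coefficient $2$ on $C(z)$.
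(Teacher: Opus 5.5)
Your proposal is correct and follows essentially the paper's own proof: expand $F(z)^2=1+2P(z)+P(z)^2$, split the cross terms of $P(z)^2$ into rim-adjacent pairs (giving $C(z)$) and nonadjacent pairs (giving $Q(z)-1-P(z)$), then substitute $P(z^2)=F(z^2)-1$. Your added remarks are accurate. The identity is purely formal, and the only combinatorial input is that the $m$ rim edges are distinct unordered pairs, so that $C(z)$ counts each adjacent pair exactly once.
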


\begin{proof}
Expanding, $F(z)^2=1+2P(z)+P(z)^2$ and
$P(z)^2=P(z^2)+2\sum_{i<j}z^{a_i+a_j}$. Splitting the pairs $\{i,j\}$ into
rim-adjacent and rim-nonadjacent ones writes the last sum as
$C(z)+\bigl(Q(z)-1-P(z)\bigr)$, and substituting $P(z^2)=F(z^2)-1$ gives
\eqref{eq:support-identity}.
\end{proof}

Write $\zeta=\e^{2\pi i/M}$ and
\[
  F_t=F(\zeta^t),\qquad
  q_t=|Q(\zeta^t)|,\qquad
  D_t=|C(\zeta^t)-1|.
\]
At nontrivial frequencies, \eqref{eq:support-identity} gives
\begin{equation}\label{eq:F-recursion-main}
  |F_t|^2\le2q_t+2D_t+|F_{2t}|.
\end{equation}

By Proposition~\ref{thm:frame-completion}, the values missing from $Q$ are exactly the residual set of rim weights
\[
  T=\set{b_i}\dd\set{b_i+b_{i+1}},
  \qquad |T|=2m.
\]
Since $Q(z)+\sum_{y\in T}z^y=1+z+\cdots+z^N$, at the nontrivial $M$-th roots of unity
\begin{equation}\label{eq:q-basic}
  q_t=\abs*{\sum_{y\in T}\zeta^{ty}}\le2m.
\end{equation}
Moreover, $D_t\le m+1$ always holds.

\subsection{The case of two large spokes}

Suppose that there are two large spokes; by Lemma~\ref{lem:large-spokes} they are adjacent. Write the two large spoke labels as $H,H-d$ and set
\[
  H=N-Y,\qquad
  \kappa=N-2Y,\qquad
  p=\kappa-d-1.
\]
Since both labels exceed $N/2$,
\[
  1\le d<\frac\kappa2.
\]
Moreover,
\begin{equation}\label{eq:phase-index-p}
  2H-d\equiv p\pmod M.
\end{equation}
Write the cycle as
\[
  H,\ H-d,\ y_0,\ y_1,\ldots,y_{m-4},\ L.
\]
Consider the following two paths.
\[
  P^+=(y_0,y_1,\ldots,y_{m-4}),
  \qquad
  P^-=(y_1,\ldots,y_{m-4},L).
\]
Each path has $s=m-3$ vertices. On the rim, $H$ is adjacent only to $H-d$ and $L$, and no label of $P^+$ is one of these two, so $H+y$ is the weight of a hub-type geodesic and therefore $H+y\le N$ for every label $y$ of $P^+$, that is, $y\le N-H=Y$. Likewise $H-d$ is adjacent only to $H$ and $y_0$, and no label of $P^-$ is one of these two, so $(H-d)+y\le N$ for every label $y$ of $P^-$, that is, $y\le Y+d$.

First, counting only the labels of $P^+$ and the non-defect pair sums internal to it gives
\[
  2Y\ge (m-3)+\frac{(m-4)(m-5)}2,
\]
whence $\kappa\le5m-7$. Since $N-3(5m-7)=\tfrac12(m^2-27m+42)\ge0$ for every $m\ge26$, this bound gives $3\kappa\le N$, which is exactly $H=N-Y\le2Y$, and also $2Y=N-\kappa\ge2\kappa$, so that $d<\kappa/2<\kappa\le Y$ and hence $L\le Y+d\le2Y$. Since moreover $H-d<H$ and the labels of $P^+$ are at most $Y$, all $m$ spoke labels lie in $[1,2Y]$, which is what is counted in (i) below. Consequently, when $m\ge40$ the occupancy terms appearing below are all smaller than $s$, since $d<\kappa\le5m-7$ and \eqref{eq:rho-monotone} and \eqref{eq:rho-closed} bound them by $\tfrac{15}8\sqrt{5m-7}+\tfrac{12}5<m-3=s$; so the counts in (iii)--(v) below are nonnegative as written and no truncation at zero is needed.

The following pairwise distinct geodesic weights are forced into $[1,2Y]$.
\begin{enumerate}[(i)]
\item all $m$ spoke labels;
\item the $(m-4)(m-5)/2$ non-defect pair sums internal to $P^+$;
\item at least $s-1-\mu(d)$ of the hub-type sums between $L$ and $P^+$;
\item at least $s-\mu(\kappa)$ of the sums between $H$ and $P^+$;
\item at least $s-\mu(\kappa)$ of the sums between $H-d$ and $P^-$;
\item at least $\lceil m/2\rceil$ of the rim labels.
\end{enumerate}
In (iii), $L\le Y+d$, and the labels of $P^+$ exceeding $Y-d$ lie in an interval of length $d$. In (iv), $H+x\le2Y$ is equivalent to $x\le3Y-N$, and the upper tail of $P^+$ has length $N-2Y=\kappa$. In (v), the condition $(H-d)+x\le2Y$ is instead equivalent to $x\le3Y-N+d$; since the labels of $P^-$ are bounded by $Y+d$ rather than by $Y$, the excluded tail $(3Y-N+d,\,Y+d]$ again contains exactly $\kappa$ integers, so the count is the same as in (iv). Item (vi) holds because if two rim labels exceeding $2Y$ were adjacent, their sum would exceed $4Y=2N-2\kappa>N$, the last inequality by the bound $\kappa\le5m-7$ above.

Summing (i)--(vi) gives
\[
  m+\frac{(m-4)(m-5)}2+\bigl(m-4-\mu(d)\bigr)+2\bigl(m-3-\mu(\kappa)\bigr)+\ceil*{\frac m2}
  =\frac{m^2}2-\frac m2+\ceil*{\frac m2}-\mu(d)-2\mu(\kappa),
\]
and all these values are pairwise distinct elements of $[1,2Y]$, so the right-hand side is at most $2Y=N-\kappa=\tfrac{m^2}2+\tfrac{3m}2-\kappa$. Rearranging,
\[
  \kappa\le\frac{3m}2-\left(\ceil*{\frac m2}-\frac m2\right)+\mu(d)+2\mu(\kappa)
  =2m-\ceil*{\frac m2}+\mu(d)+2\mu(\kappa)
  =\floor*{\frac{3m}2}+\mu(d)+2\mu(\kappa),
\]
the last equality holding in both parities, since $2m-\ceil{m/2}$ equals $3m/2$ for even $m$ and $(3m-1)/2$ for odd $m$. This yields the following.

\begin{lemma}[Localization of two large spokes]\label{lem:two-large-position}
If $m\ge40$, then
\begin{equation}\label{eq:z-fixedpoint}
  \kappa\le\floor*{\frac{3m}{2}}+\mu(d)+2\mu(\kappa),
  \qquad 1\le d<\frac\kappa2.
\end{equation}
Moreover
\begin{equation}\label{eq:z-3m}
  \kappa\le3m,
\end{equation}
and, setting $\Pi_m=\floor{5m/2}+1$,
\begin{equation}\label{eq:p-Pm}
  p=\kappa-d-1\le \Pi_m.
\end{equation}
\end{lemma}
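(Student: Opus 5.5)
The first inequality \eqref{eq:z-fixedpoint} is exactly what the counting (i)--(vi) just before the statement gives. There $d<\kappa/2$ comes from both large labels exceeding $N/2$, and the nonnegativity of the counts for $m\ge40$ has already been checked. So the work is to turn this implicit inequality in $\kappa$ into the explicit bounds \eqref{eq:z-3m} and \eqref{eq:p-Pm}. I would split the argument into an asymptotic range, handled with the closed form \eqref{eq:rho-closed}, and the boundary values $m=40,41,42$, handled with the sharp form \eqref{eq:rho-sharp}. This is the split announced after Proposition~\ref{lem:rho-closed}.

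For \eqref{eq:z-3m} I would use monotonicity \eqref{eq:rho-monotone}. Since $d\le\lfloor(\kappa-1)/2\rfloor$, we get $\mu(d)\le\mu(\lfloor(\kappa-1)/2\rfloor)$. Inserting \eqref{eq:rho-closed} then gives
\[
  \kappa<\tfrac{3m}{2}+\tfrac{15}{8}\bigl(\sqrt{\kappa/2}+2\sqrt\kappa\bigr)+\tfrac{36}{5}.
\]
This is a quadratic inequality in $\sqrt\kappa$, so it bounds $\kappa$ by $\tfrac{3m}2+O(\sqrt m)$. For $m\ge43$ I would check directly that this upper bound is at most $3m$, by squaring rational quantities in the style used throughout.

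At $m=40,41,42$ the closed form is too weak. A rough evaluation at $m=40$ gives about $124>120$, so there I would run a fixed-point iteration. Start from the a priori bound $\kappa\le5m-7$ proved before the lemma, and bound $\mu(w)$ by the largest integer $\ell$ allowed by \eqref{eq:rho-sharp}. Repeatedly replace $\kappa$ by the integer maximum of $\lfloor3m/2\rfloor+\mu(d)+2\mu(\kappa)$ over the admissible range, until the bound stabilizes at or below $3m$. This is the iteration recorded in Appendix~\ref{app:kappa-iteration} and Table~\ref{tab:kappa-maxima}. Since $\mu$ is monotone and takes integer values, the iteration is a finite check on explicit integers.

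For \eqref{eq:p-Pm} I would rewrite \eqref{eq:z-fixedpoint} as
\[
  p=\kappa-d-1\le\floor*{\tfrac{3m}{2}}-1+\bigl(\mu(d)-d\bigr)+2\mu(\kappa),
\]
which is useful because the gain $-d$ offsets the loss $\mu(d)$. Using $\mu(d)\le\varepsilon(d)\le d$ and, more sharply, the small values \eqref{eq:rho-small}, the term $\mu(d)-d$ is nonpositive and is strictly negative once $d\ge6$. Together with $\kappa\le3m$ this leaves the requirement
\[
  2\mu(3m)+\max_d\bigl(\mu(d)-d\bigr)\le m+2+\bigl(\floor{5m/2}-\floor{3m/2}-m\bigr).
\]
For large $m$ this follows from \eqref{eq:rho-closed}, since $2\cdot\tfrac{15}8\sqrt{3m}$ grows more slowly than $m$.

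The main obstacle is the boundary range for \eqref{eq:p-Pm}. Near $m=40$ the crude estimate $2\mu(3m)\approx44$ exceeds $m+2=42$, so one cannot bound $\mu(d)$ and $\mu(\kappa)$ separately at their worst cases. My first attempt would be to maximize $\kappa-d$ jointly over pairs $(\kappa,d)$ satisfying \eqref{eq:z-fixedpoint}, using the sharp integer values of $\mu$ from \eqref{eq:rho-sharp}. A large $\kappa$ forces the right-hand side to be large, and that is only possible when $\mu(d)$ is large, which in turn requires $d$ to be large. Combining this with $\kappa\le3m$ from the first part, I expect the joint finite maximization to give $p\le\floor{5m/2}+1$ at $m=40,41,42$, with the closed-form estimate covering all larger $m$.
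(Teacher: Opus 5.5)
Your derivation of \eqref{eq:z-fixedpoint}, the fixed-point iteration at $m=40,41,42$, and the closed-form contradiction for \eqref{eq:z-3m} at $m\ge43$ all match the paper. Your joint maximization of $\kappa-d-1$ at the boundary values is equivalent to the paper's per-$d$ iteration. The gap is in \eqref{eq:p-Pm} for $m\ge43$.

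There you bound $\mu(\kappa)\le\mu(3m)$ for every $d$. For $d\le5$ the term $\mu(d)-d$ may equal $0$ as far as \eqref{eq:rho-small} shows, so you need $2\mu(3m)\le m+2$. This is not available at $m=43$ or $m=45$:
\begin{itemize}
\item At $m=43$ you need $\mu(129)\le22$. But \eqref{eq:rho-closed} gives only $\mu(129)<\frac{15}{8}\sqrt{129}+\frac{12}{5}=23.69\ldots$, and even \eqref{eq:rho-sharp} does not exclude $\ell=23$ at $w=129$.
\item At $m=45$ you need $\mu(135)\le23$. But \eqref{eq:rho-closed} gives $24.18\ldots$, and \eqref{eq:rho-sharp} admits $\ell=24$ at $w=135$.
\end{itemize}
Your estimate therefore yields only $p\le\Pi_m+1$ at these two values. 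The claim that the closed form covers every $m\ge43$ is false as stated.

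The missing idea is to feed the small-$d$ case back into $\mu(\kappa)$ rather than keeping $\kappa\le3m$ throughout. The paper first checks, by substituting integers into \eqref{eq:rho-quadratic}, that $2\mu(3m)\le m+4$ and $2\mu(\Pi_m+10)\le m+2$ for all $m\ge42$. Then it splits on $d$:
\begin{itemize}
\item If $d\ge8$, then $\mu(d)\le d-2$ absorbs the extra $2$ and gives $p\le\Pi_m$ directly.
\item If $d\le7$, the crude estimate gives $p\le\Pi_m+2$, hence $\kappa=p+d+1\le\Pi_m+10=\floor{5m/2}+11$, which is well below $3m$. Reinserting this into \eqref{eq:z-fixedpoint} gives $p\le\floor{3m/2}-1+2\mu(\Pi_m+10)\le\Pi_m$.
\end{itemize}
Alternatively, you could extend your finite joint maximization to cover $m=43$ and $m=45$. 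Either way, some second pass of this kind is required.
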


\begin{proof}
Inequality~\eqref{eq:z-fixedpoint} follows directly from the count above.

We first prove \eqref{eq:z-3m}. For the boundary values $m=40,41,42$ we iterate
\eqref{eq:z-fixedpoint} with the difference $d$ held fixed: starting from
$\kappa\le5m-7$, we repeatedly substitute the current bound into the
right-hand side. Every term of the resulting sequence is an upper bound for
$\kappa$, because $\mu$ is nondecreasing by \eqref{eq:rho-monotone}; in every
case the sequence is nonincreasing and becomes constant after at most four
steps, and we write $\kappa(d)$ for its final value. Here $\mu(\kappa)$ is evaluated from
\eqref{eq:rho-sharp}, whose hypothesis $\kappa\ge10$ is satisfied throughout, and
$\mu(d)$ from the elementary bound \eqref{eq:rho-elementary}, which is what
governs the small values of $d$: the values of $d$ at which $\kappa(d)-d-1$ is
maximized below all lie in $1\le d\le9$, where \eqref{eq:rho-small} gives
$\mu(d)\le\varepsilon(d)$. For $d\ge10$ the
smaller of \eqref{eq:rho-elementary} and \eqref{eq:rho-sharp} is used. A value of
$d$ is admissible only while the accompanying
constraint $d<\kappa/2$ of \eqref{eq:z-fixedpoint} is still satisfiable, that is,
while $2d<\kappa(d)$; discarding the rest excludes the large values of $d$.
Taking the maxima over the admissible $d$ gives the values recorded in
Table~\ref{tab:kappa-maxima}.

\begin{table}[ht]
\centering
\caption{Maxima of $\kappa(d)$ and of $\kappa(d)-d-1$ over the admissible
$d$, at the three boundary values $m=40,41,42$.}\label{tab:kappa-maxima}
\begin{tabular}{ccccc}
\toprule
$m$ & $\max_d\kappa(d)$ & $3m$ & $\max_d\bigl(\kappa(d)-d-1\bigr)$ & $\Pi_m$\\
\midrule
$40$ & $120$ & $120$ & $101$ & $101$\\
$41$ & $123$ & $123$ & $102$ & $103$\\
$42$ & $126$ & $126$ & $106$ & $106$\\
\bottomrule
\end{tabular}
\end{table}

Both \eqref{eq:z-3m} and \eqref{eq:p-Pm} therefore hold at these three values
of $m$. The values of $d$ at which the two maxima are attained, and the full
iterate sequences at those values, are recorded in
Table~\ref{tab:kappa-iterates} of Appendix~\ref{app:kappa-iteration}.

For $m\ge43$, assume $\kappa\ge3m+1$. Then \eqref{eq:rho-closed}, which by
Proposition~\ref{lem:rho-closed} is available at every argument and in particular at
the possibly small $d$, together with $d<\kappa/2$, gives
\[
  \kappa<\frac{3m}{2}
  +\frac{15}{8}\left(2+\frac1{\sqrt2}\right)\sqrt \kappa
  +\frac{36}{5}.
\]
Using $1/\sqrt2<71/100$, the coefficient of the square root is less than $813/160$, so that $\kappa-\tfrac{813}{160}\sqrt\kappa<\tfrac{3m}2+\tfrac{36}5$. The map $\kappa\mapsto\kappa-\tfrac{813}{160}\sqrt\kappa$ has derivative $1-\tfrac{813}{320}\kappa^{-1/2}$ and is therefore increasing for $\kappa\ge(813/320)^2=660969/102400$, which holds here since $\kappa\ge3m+1\ge130$; it consequently suffices to substitute $\kappa=3m+1$, which gives $\tfrac{3m}2-\tfrac{31}5<\tfrac{813}{160}\sqrt{3m+1}$, both sides being positive for $m\ge43$. But
\[
  \left(\frac{3m}{2}-\frac{31}{5}\right)^2
  -\left(\frac{813}{160}\right)^2(3m+1)
  =\frac{57600m^2-2459067m+323095}{25600}>0,
\]
which is positive at $m=43$, and the forward difference
\[
  \frac{3(38400m-800489)}{25600}
\]
is also positive. This contradiction proves \eqref{eq:z-3m}.

Recall that $\Pi_m=\floor{5m/2}+1$. Substituting suitable integers into \eqref{eq:rho-quadratic} shows that for $m\ge42$,
\begin{equation}\label{eq:rho-two-aux}
  2\mu(3m)\le m+4,
  \qquad
  2\mu(\Pi_m+10)\le m+2.
\end{equation}
Concretely, writing $m=2k$ or $2k+1$ and substituting $\ell=k+3$ or $k+2$, the required inequalities reduce to the positivity of
\[
  73k^2-1442k+9,
  \quad
  73k^2-1442k-759,
\]
\[
  73k^2-1332k-2828,
  \quad
  73k^2-1332k-3340,
\]
all of which are increasing and positive for $k\ge21$.

If $d\ge8$, then \eqref{eq:rho-elementary} gives $\mu(d)\le\varepsilon(d)\le d-2$, so \eqref{eq:z-fixedpoint}, \eqref{eq:z-3m}, and \eqref{eq:rho-two-aux} give
\[
  p\le\floor*{\frac{3m}{2}}-3+2\mu(3m)\le \Pi_m.
\]
If $d\le7$, then $\mu(d)\le\varepsilon(d)\le d$ by \eqref{eq:rho-small}, so $p\le\floor{3m/2}-1+2\mu(\kappa)\le\Pi_m+2$, hence $\kappa=p+d+1\le \Pi_m+10$. Applying \eqref{eq:rho-two-aux} again gives $p\le \Pi_m$. The boundary values $m=40,41,42$ are covered by Table~\ref{tab:kappa-maxima}.
\end{proof}

\begin{remark}\label{rem:p-tight}
At $m=40$ and $m=42$ the maximum of $\kappa(d)-d-1$ equals $\Pi_m$ exactly, so
the iteration of \eqref{eq:z-fixedpoint} yields nothing stronger than
\eqref{eq:p-Pm} at those two values; only at $m=41$ is there slack. The value at
$m=40$ is used without slack in the case $(m,h)=(40,2)$ of
Proposition~\ref{lem:boundary}, as is made precise at the end of
Appendix~\ref{app:kappa-iteration}.
\end{remark}

\subsection{The case of one large spoke}

Let the unique large spoke be $H=N-Y$ and set
\[
  \kappa=N-2Y,
  \qquad
  e=2H-M=\kappa-1;
\]
we call $e$ the \emph{excess exponent} of $H$. Write the cycle as
\[
  H,\ y_0,\ y_1,\ldots,y_{m-3},\ L.
\]
Counting the core and its two flanking spokes in the same way as before yields the following.

\begin{lemma}[Localization of one large spoke]\label{lem:one-large-position}
If $m\ge40$, then
\begin{equation}\label{eq:one-large-fixedpoint}
  \kappa\le\floor*{\frac{3m}{2}}+1
  +\mu(\kappa)+2\mu(\floor{\kappa/2}).
\end{equation}
In particular,
\begin{equation}\label{eq:e-3m}
  e=\kappa-1\le3m-6.
\end{equation}
\end{lemma}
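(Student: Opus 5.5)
The plan is to copy the counting argument used for two large spokes, now with the single large spoke $H$ and the core path $P=(y_1,\ldots,y_{m-3})$ on $s=m-3$ vertices. On the rim, $H$ is adjacent only to $y_0$ and $L$. So $H+y$ is a hub-type weight for every label $y$ of $P$, which gives $y\le N-H=Y$. The flanking spokes $y_0$ and $L$ are not large, so each is at most $N/2=Y+\kappa/2$.

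\textbf{Step 1: a crude bound.} Counting only the spoke labels of $P$ and the $(m-4)(m-5)/2$ non-defect sums internal to $P$, all of which lie in $[1,2Y]$, gives
\[
  2Y\ge(m-3)+\frac{(m-4)(m-5)}2 .
\]
Hence $\kappa=N-2Y$ is at most linear in $m$, roughly $\kappa\le5m-7$. For $m\ge40$ this gives $3\kappa\le N$, that is, $H\le2Y$. It also gives $Y+\kappa/2\le2Y$ and $4Y=2N-2\kappa>N$. So all $m$ spoke labels lie in $[1,2Y]$, and the occupancy terms below are smaller than $s$ by \eqref{eq:rho-closed}, so no truncation at zero is needed.

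\textbf{Step 2: the main count.} The following pairwise distinct weights are forced into $[1,2Y]$.
\begin{enumerate}[(i)]
\item All $m$ spoke labels.
\item The $(m-4)(m-5)/2$ non-defect sums internal to $P$.
\item At least $s-\mu(\kappa)$ sums $H+y$ with $y\in P$. Indeed $H+y\le2Y$ iff $y\le Y-\kappa$, and the excluded labels lie in the interval $(Y-\kappa,Y]$ of length $\kappa$.
\item At least $s-1-\mu(\floor{\kappa/2})$ sums $y_0+y$, taken over the labels of $P$ other than its neighbour $y_1$; and likewise at least $s-1-\mu(\floor{\kappa/2})$ sums $L+y$, over the labels of $P$ other than $y_{m-3}$. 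Since $y_0,L\le Y+\kappa/2$, the excluded tail has length at most $\floor{\kappa/2}$, using integrality.
\item At least $\ceil{m/2}$ rim labels, because two adjacent rim labels exceeding $2Y$ would sum to more than $4Y>N$.
\end{enumerate}
The total is
\[
  \frac{m^2}2-\frac m2-1+\ceil*{\frac m2}-\mu(\kappa)-2\mu(\floor{\kappa/2}).
\]
Comparing with $2Y=\tfrac{m^2}2+\tfrac{3m}2-\kappa$ gives exactly \eqref{eq:one-large-fixedpoint}.

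\textbf{Step 3: deducing \eqref{eq:e-3m}.} This is the main obstacle, since it is purely numerical. I would split into two ranges.
\begin{itemize}
\item For $m=40,41$: iterate \eqref{eq:one-large-fixedpoint} starting from the crude bound of Step 1, evaluating $\mu$ through the sharp form \eqref{eq:rho-sharp} (its hypothesis $w\ge10$ holds for both arguments). By monotonicity \eqref{eq:rho-monotone}, every iterate is an upper bound for $\kappa$, and the iterates should stabilize at or below $3m-5$. This is exactly the regime where the relaxed form is too weak.
\item For $m\ge42$: assume $\kappa\ge3m-4$. Applying \eqref{eq:rho-closed} to both occupancy terms gives
\[
  \kappa<\frac{3m}2+1+\frac{15}8\Bigl(1+\sqrt2\Bigr)\sqrt\kappa+\frac{36}5 ,
\]
whose square-root coefficient is below $4.53$. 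Since $\kappa-c\sqrt\kappa$ is increasing for $\kappa\ge c^2/4$, it suffices to substitute $\kappa=3m-4$. Squaring then leaves a quadratic in $m$ with positive leading coefficient, which should be positive at $m=42$ and have positive forward difference, giving a contradiction.
\end{itemize}
If the margin at $m=42$ turns out too thin, I would move $m=42$ into the finite iteration as well.
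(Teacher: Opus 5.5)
Your proposal is correct and follows the paper's proof essentially step for step. The same core and flanking-spoke count yields \eqref{eq:one-large-fixedpoint}. The cases $m=40,41$ are handled by iterating from $\kappa\le5m-7$ with \eqref{eq:rho-sharp}: the paper's iterates are $193,129,118,115$ and $198,130,119,116$, so at $m=40$ the bound $3m-5=115$ is reached with no slack. The range $m\ge42$ is handled by the same closed-form contradiction at $\kappa=3m-4$. Your coefficient bound $4.53$ suffices there, since $(50.8)^2-4.53^2\cdot122>0$ at $m=42$, so the fallback of moving $m=42$ into the iteration is not needed.
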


\begin{proof}
On the rim, $H$ is adjacent only to $y_0$ and $L$; we call these two spokes the
\emph{flanking spokes} and the path
\[
  (y_1,y_2,\ldots,y_{m-3})
\]
on the remaining $s=m-3$ rim vertices the \emph{core}. No label of the core is
one of the two flanking spokes, so $H+y$ is the weight of a hub-type geodesic and
therefore $H+y\le N$ for every core label $y$, that is, $y\le N-H=Y$. Since $H$
is the only spoke exceeding $N/2$, each flanking spoke is at most
$\floor{N/2}=Y+\floor{\kappa/2}$.

Counting only the core labels and the non-defect pair sums internal to the core
gives $2Y\ge(m-3)+(m-4)(m-5)/2$ exactly as before
Lemma~\ref{lem:two-large-position}, whence $\kappa\le5m-7$, $3\kappa\le N$ for
$m\ge26$, and $H=N-Y\le2Y$. Moreover $4Y=2N-2\kappa>N$, so that each flanking
spoke is at most $\floor{N/2}<2Y$; hence all $m$ spoke labels lie in $[1,2Y]$,
which is what is counted in (i) below, and, as before, for $m\ge40$ the
occupancy terms appearing below are all smaller than $s$, so the counts in
(iii)--(v) are nonnegative as written.

The following pairwise distinct geodesic weights are forced into $[1,2Y]$.
\begin{enumerate}[(i)]
\item all $m$ spoke labels;
\item the $(m-4)(m-5)/2$ non-defect pair sums internal to the core;
\item at least $s-\mu(\kappa)$ of the sums between $H$ and the core;
\item at least $s-1-\mu(\floor{\kappa/2})$ of the hub-type sums between $y_0$ and the core;
\item at least $s-1-\mu(\floor{\kappa/2})$ of the hub-type sums between $L$ and the core;
\item at least $\lceil m/2\rceil$ of the rim labels.
\end{enumerate}
In (iii), $H+x\le2Y$ is equivalent to $x\le3Y-N$, and the upper tail
$(3Y-N,\,Y]$ of the core contains exactly $N-2Y=\kappa$ integers. In (iv) and
(v), a flanking spoke $x'$ satisfies $x'+x\le2Y$ as soon as
$x\le Y-\floor{\kappa/2}$, and the corresponding tail
$(Y-\floor{\kappa/2},\,Y]$ contains $\floor{\kappa/2}$ integers; the further
subtraction of $1$ discards the one core vertex rim-adjacent to the flanking
spoke, namely $y_1$ for $y_0$ and $y_{m-3}$ for $L$, whose sum with it is not the
weight of a hub-type geodesic. Item (vi) holds because if two rim labels
exceeding $2Y$ were adjacent, their sum would exceed $4Y>N$.

Summing (i)--(vi) gives
\begin{align*}
  m&+\frac{(m-4)(m-5)}2+\bigl(m-3-\mu(\kappa)\bigr)
  +2\bigl(m-4-\mu(\floor{\kappa/2})\bigr)+\ceil*{\frac m2}\\
  &=\frac{m^2}2-\frac m2-1+\ceil*{\frac m2}-\mu(\kappa)-2\mu(\floor{\kappa/2}),
\end{align*}
and all these values are pairwise distinct elements of $[1,2Y]$, so the
right-hand side is at most $2Y=N-\kappa=\tfrac{m^2}2+\tfrac{3m}2-\kappa$.
Rearranging,
\[
  \kappa\le\frac{3m}2+1-\left(\ceil*{\frac m2}-\frac m2\right)
  +\mu(\kappa)+2\mu(\floor{\kappa/2})
  =2m-\ceil*{\frac m2}+1+\mu(\kappa)+2\mu(\floor{\kappa/2}),
\]
which is \eqref{eq:one-large-fixedpoint}, since $2m-\ceil{m/2}=\floor{3m/2}$ in
both parities, as noted before Lemma~\ref{lem:two-large-position}.

For $m=40,41$ we iterate \eqref{eq:one-large-fixedpoint} exactly as in the proof
of Lemma~\ref{lem:two-large-position}, starting from $\kappa\le5m-7$. Here $\mu$ is
evaluated from \eqref{eq:rho-sharp}, whose hypothesis is satisfied at every
argument occurring, the smallest being $\floor{115/2}=57$. The resulting
nonincreasing sequences are
\[
  193,\ 129,\ 118,\ 115
  \qquad\text{and}\qquad
  198,\ 130,\ 119,\ 116,
\]
so $\kappa\le115$ and $\kappa\le116$ respectively, both at most $3m-5$. The first
sequence stops at a fixed point of \eqref{eq:one-large-fixedpoint}, since
$\mu(115)\le22$ and $\mu(57)\le16$ give $60+1+22+2\cdot16=115$.

For $m\ge42$, assume $\kappa\ge3m-4$. Then \eqref{eq:rho-closed}, applied at
$\kappa$ and at $\floor{\kappa/2}\le\kappa/2$, gives
\[
  \kappa<\frac{3m}{2}
  +\frac{15}{8}\left(1+\frac2{\sqrt2}\right)\sqrt\kappa
  +\frac{41}{5}.
\]
Using $1/\sqrt2<99/140$, the coefficient of the square root is at most $507/112$,
so that $\kappa-\tfrac{507}{112}\sqrt\kappa<\tfrac{3m}2+\tfrac{41}5$. The map
$\kappa\mapsto\kappa-\tfrac{507}{112}\sqrt\kappa$ has derivative
$1-\tfrac{507}{224}\kappa^{-1/2}$ and is therefore increasing for
$\kappa\ge(507/224)^2=257049/50176$, which holds here since
$\kappa\ge3m-4\ge122$; it consequently suffices to substitute $\kappa=3m-4$,
which gives $\tfrac{3m}2-\tfrac{61}5<\tfrac{507}{112}\sqrt{3m-4}$, both sides
being positive for $m\ge42$. But
\[
  \left(\frac{3m}{2}-\frac{61}{5}\right)^2
  -\left(\frac{507}{112}\right)^2(3m-4)
  =\frac{705600m^2-30756435m+72381124}{313600}>0,
\]
which is positive at $m=42$, and the forward difference
\[
  \frac{3(470400m-10016945)}{313600}
\]
is also positive. This contradiction proves $\kappa\le3m-5$, that is,
$e\le3m-6$.
\end{proof}

\subsection{Two low frequencies of the adjacent-sum polynomial}

In the case of two large spokes, let $L$ be the spoke adjacent to $H$ other than $H-d$, in the notation introduced before Lemma~\ref{lem:two-large-position}. Using Lemma~\ref{lem:two-large-position}, the first two frequencies of the adjacent-sum polynomial can be controlled uniformly.

\begin{lemma}[Low-frequency adjacent-sum bounds]\label{lem:D12}
If $m\ge40$ and there are two large spokes, then
\begin{equation}\label{eq:D1}
  D_1=|C(\zeta)-1|\le m-\frac34,
\end{equation}
\begin{equation}\label{eq:D2}
  D_2=|C(\zeta^2)-1|\le m+\frac12.
\end{equation}
\end{lemma}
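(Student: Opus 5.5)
The plan is to use only the trivial bound $\abs{\zeta^{t(a_i+a_{i+1})}}=1$ on most terms of $C(\zeta^t)-1$, and to gain the required constant from the term belonging to the adjacent large pair $H,\,H-d$. Its exponent is $2H-d\equiv p\pmod M$ by \eqref{eq:phase-index-p}, and by \eqref{eq:p-Pm} the residue $p$ is small compared with $M=(m+1)(m+2)/2$. Isolating that term gives
\[
  D_t\le (m-1)+\abs{\zeta^{tp}-1}=(m-1)+2\sin\frac{\pi tp}{M}
  \qquad(t=1,2),
\]
provided $0<\pi t p/M\le\pi/2$. The key input is $p\le\Pi_m=\floor{5m/2}+1$ from Lemma~\ref{lem:two-large-position}. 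Also $p=\kappa-d-1\ge1$, since $d<\kappa/2$.

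For \eqref{eq:D2} this should already suffice. I would bound $2\pi\Pi_m/M$ from above by an explicit rational function of $m$ that decreases in $m$, using a rational upper bracket for $\pi$. Then I would check that $2\sin(2\pi\Pi_m/M)\le\tfrac32$ at $m=40$, the extreme case, where $\Pi_{40}=101$ and $M=861$. There the angle is about $0.74$ and $2\sin\approx1.35<\tfrac32$. Monotonicity in $m$ then handles all $m\ge40$. The comparison itself uses the alternating Taylor bound $\sin x\le x-x^3/6+x^5/120$ and squaring, as announced in Section~\ref{sec:strategy}.

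For \eqref{eq:D1} the single-term gain $2\sin(\pi p/M)$ is too weak. At $m=40$ it is about $0.72$, while $\tfrac54$ is needed. I would therefore group more terms: the two other adjacent sums at the large pair, $H+L$ and $(H-d)+y_0$. From the localization, $H+L\le N+d$ because $L\le Y+d$, and $(H-d)+y_0\le N$. Both sums also exceed $H>N/2$. Hence their exponents modulo $M$ lie in roughly $(M/2,\,M+d)$, so their phases lie in the lower half-plane, up to the small overshoot $2\pi d/M$. By contrast, $\zeta^p-1$ points in the direction $\tfrac\pi2+\tfrac{\pi p}M$, in the upper half-plane.

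The plan is to bound the four-term vector
\[
  \zeta^{2H-d}-1+\zeta^{H+L}+\zeta^{H-d+y_0}
\]
in absolute value by $\tfrac{13}4$, and to bound the remaining $m-3$ terms trivially. To do this I would maximize $\abs{w+u_1+u_2}$, where $w=\zeta^p-1$ is fixed and $u_1,u_2$ are unit vectors whose arguments lie in the allowed arc. The maximum is attained at an endpoint of the arc. I would then check at the extreme values of $p$ and $d$ that the resulting bound is below $\tfrac{13}4$. The values of $d$ are controlled by $d<\kappa/2\le 3m/2$ from \eqref{eq:z-3m}.

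The main obstacle is this last optimization. The arc allowed for $u_1,u_2$ may be too wide, because $d$ can be as large as about $3m/2$. If so, I would sharpen it using $L\le Y+d$ together with $y_0\le Y+d$. Failing that, I would split according to whether $d$ is small, where the arc is narrow, or large, where $p=\kappa-d-1$ is forced smaller by \eqref{eq:z-fixedpoint} and the single-term gain improves.
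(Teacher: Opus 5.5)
Your treatment of \eqref{eq:D2} is correct and is the paper's: $D_2\le m-1+\abs{\e^{2i\alpha}-1}=m-1+2\sin\alpha$ with $\alpha=2\pi p/M<\tfrac34$.

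Your argument for \eqref{eq:D1}, however, has a genuine gap.

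First, the bookkeeping is off by one. Grouping three adjacent sums with the $-1$ leaves $m-3$ unit terms, so the four-term vector must be at most $\tfrac94$, not $\tfrac{13}4$. Likewise, for the single isolated term the requirement is $\tfrac14$, not $\tfrac54$. The target $\tfrac{13}4$ only yields $D_1\le m+\tfrac14$. That is weaker than the single-term bound you rejected, and it holds trivially, since $\abs{\zeta^p-1}+2<\alpha+2<\tfrac{11}4$.

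Second, with the correct target your relaxed optimization fails. The failure is at the end of the arc you did not worry about. Let $u_1,u_2$ range over phases starting just above $\pi$, independently of $p$. Take $p=\Pi_{40}=101$, so $\alpha\approx0.737$, and $u_1=u_2\approx-1$. Then $\abs{\e^{i\alpha}-3}=\sqrt{10-6\cos\alpha}\approx2.36>\tfrac94$. Even $\abs{\e^{i\alpha}-2}\approx1.43>\tfrac54$. Your fallbacks ($L,y_0\le Y+d$, and a split in $d$) act on the wrap-around end. The bad configuration already occurs at $d=1$, so they cannot repair this.

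The missing idea is the coupling between $p$ and the phase of $H+L$. You observed that the sums exceed $H$, but then weakened this to $>N/2$; keep it.

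\emph{Case $H+L\le M$.} Since $2H-M=\kappa-1=p+d$, one has $2(H+L)-M\ge p+d+2>p$. Hence the phase of $\zeta^{H+L}$ exceeds $\pi+\alpha/2$. Meanwhile $\zeta^p-1=2i\sin(\alpha/2)\,\e^{i\alpha/2}$ points in direction $\pi/2+\alpha/2$. So the two vectors make an angle larger than $\pi/2$, and $\abs{\zeta^p-1+\zeta^{H+L}}^2\le1+4\sin^2(\alpha/2)\le1+\alpha^2<\tfrac{25}{16}$. This is exactly the paper's condition $\beta+\alpha/2<\pi$.

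\emph{Case $H+L\ge M$.} Put $\delta=H+L-M$. Then $p+\delta=\kappa-Y+L-d-2\le\kappa-2\le3m-2$, by $L\le Y+d$ and \eqref{eq:z-3m}. Both phases therefore lie in $[0,2\theta]$ with $\theta=\pi(p+\delta)/M<\tfrac12$. This gives $\abs{\e^{i\alpha}+\e^{i\gamma}-1}^2\le5-4\cos\theta<\tfrac{25}{16}$, and this is where a large $d$ is absorbed.

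In both cases $D_1\le(m-2)+\tfrac54$, grouping only $\zeta^{2H-d}$, $\zeta^{H+L}$ and the $-1$. The term $\zeta^{H-d+y_0}$ does not help: $2(H-d+y_0)-M=p-d+2y_0$ can fall below $p$. It can therefore only enter through the triangle inequality.
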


\begin{proof}
Write $\alpha=2\pi p/M$, where $p=\kappa-d-1=2H-d-M$ as in
\eqref{eq:phase-index-p}, so that $\zeta^{2H-d}=\e^{i\alpha}$. By
\eqref{eq:p-Pm} we have $p\le\Pi_m$, and $8\pi\Pi_m<3M$ for $m\ge40$, so
$0\le\alpha<\tfrac34$.

First,
\[
  D_1\le m-2+
  |\zeta^{2H-d}+\zeta^{H+L}-1|.
\]
If $H+L\le M$, write $\zeta^{H+L}=\e^{-i\beta}$ with
$\beta=2\pi(M-H-L)/M\ge0$. Since $p=2H-d-M$ and $d,L\ge1$, we obtain
$p<2H+2L-M$, which is exactly $\beta+\tfrac\alpha2<\pi$. Expanding,
\[
  |\e^{i\alpha}+\e^{-i\beta}-1|^2
  =3+2\cos(\alpha+\beta)-2\cos\alpha-2\cos\beta
  =3-2\cos\alpha-4\sin\frac\alpha2\,\sin\Bigl(\beta+\frac\alpha2\Bigr).
\]
Since $0\le\beta+\alpha/2<\pi$, the last sine is nonnegative, and
$\cos\alpha\ge1-\alpha^2/2$ gives
\[
  |\e^{i\alpha}+\e^{-i\beta}-1|^2
  \le3-2\cos\alpha\le1+\alpha^2\le\frac{25}{16}.
\]
If $H+L\ge M$, write $\zeta^{H+L}=\e^{i\gamma}$ with $\gamma=2\pi\delta/M$,
where $\delta=H+L-M\ge0$. Since $L\le Y+d$, we have
$p+\delta=\kappa-d+L-Y-2\le\kappa-2\le3m-2$, the last inequality by
\eqref{eq:z-3m}. Set $\theta=(\alpha+\gamma)/2=\pi(p+\delta)/M$ and
$u=(\alpha-\gamma)/2$. Expanding,
\[
  |\e^{i\alpha}+\e^{i\gamma}-1|^2
  =3+2\cos2u-4\cos\theta\cos u
  =1+4\cos^2u-4\cos\theta\cos u,
\]
which, as a quadratic in $\cos u\in[\cos\theta,1]$, is increasing there and
hence maximized at $\cos u=1$, that is, when the two phases coincide. Thus
\[
  |\e^{i\alpha}+\e^{i\gamma}-1|^2
  \le5-4\cos\theta
  \le1+2\theta^2
  <\frac{25}{16},
\]
where the last step uses $\theta\le\pi(3m-2)/M<\tfrac12$ for $m\ge40$. Hence
\eqref{eq:D1} holds.

Second,
\[
  D_2\le m-1+|\e^{2i\alpha}-1|
  =m-1+2\sin\alpha.
\]
Since $\sin\alpha<\alpha<\tfrac34$, the last term is less than $\tfrac32$.
This gives \eqref{eq:D2}.
\end{proof}

\section{Uniform Fourier inequalities and the elimination of the infinite range}\label{sec:uniform-fourier}

\subsection{A lower bound for the finite Fourier kernel}

In this section we set
\[
  R=\floor*{\frac{M-1}{2}},
  \qquad
  \rho_R=\sum_{r=1}^{R}c_r=1-\frac1{2R+1},
\]
with $c_r=2/(4r^2-1)$ and the finite kernel $K_R$ as in
Section~\ref{sec:fourier}. Lemma~\ref{lem:kernel} gives
\[
  K_R(x)\ge \rho_R\quad(0\le x\le\pi),
\]
\[
  K_R(x)\ge \rho_R+\pi\sin x\quad(\pi\le x\le2\pi).
\]
Hence, setting
\[
  \Lambda=
  \sum_{a\in\{0,a_0,\ldots,a_{m-1}\}}
  K_R\!\left(\frac{2\pi a}{M}\right),
\]
we obtain
\begin{equation}\label{eq:Lambda-lower-general}
  \Lambda\ge \rho_R(m+1)+
  \pi\sum_{a_i>N/2}\sin\frac{2\pi a_i}{M}.
\end{equation}
On the other hand, the Fourier expansion is
\begin{equation}\label{eq:Lambda-fourier}
  \Lambda=\frac\pi2\operatorname{Im}F_1+
  \sum_{r=1}^{R}c_r\operatorname{Re}F_{2r}.
\end{equation}
Moreover,
\begin{equation}\label{eq:coeff-tail-third}
  \rho_R\ge1-\frac1N,
  \qquad
  \sum_{r=2}^{R}c_r<\frac13.
\end{equation}

\subsection{Phase coupling of the first two Fourier terms}

\begin{lemma}[Phase-pair inequality]\label{lem:phase-pair}
Let $\alpha,\beta,\Gamma,V>0$ with $\Gamma>V$, and set
\[
  \tau=\frac{\alpha}{2\sqrt{\Gamma-V}}.
\]
Assume also
\[
  0<\tau<\beta,
  \qquad
  \Gamma(\beta-\tau)\ge V(2\beta-\tau).
\]
If complex numbers $z,w$ satisfy
\[
  |z^2-w|\le \Gamma,
  \qquad
  |w|\le V,
\]
then
\begin{equation}\label{eq:phase-pair}
  \alpha\operatorname{Im}z+
  \beta\operatorname{Re}w
  \le
  \alpha\sqrt{\Gamma-V}+\beta V.
\end{equation}
\end{lemma}

\begin{proof}
Let $q=z^2$. If $\operatorname{Im}z<0$ the left-hand side decreases, so we may use
\[
  \operatorname{Im}z
  \le\sqrt{\frac{|q|-\operatorname{Re}q}{2}}.
\]
Moreover,
\[
  |q|-\operatorname{Re}q
  =\max_{|\xi|\le1}\operatorname{Re}((\xi-1)q).
\]
Using Young's inequality
\[
  \alpha\sqrt u\le\tau u+\frac{\alpha^2}{4\tau}
  \qquad(u\ge0)
\]
and $q=w+\Delta$ with $|\Delta|\le \Gamma$, we obtain
\[
\begin{aligned}
 \alpha\operatorname{Im}z+\beta\operatorname{Re}w
 \le{}&\frac{\alpha^2}{4\tau}
 +\max_{|\xi|\le1}
 \left[
   \frac{\tau \Gamma}{2}|\xi-1|
   +V\abs*{\beta+\frac\tau2(\xi-1)}
 \right].
\end{aligned}
\]
Let $\varrho=|\xi-1|$. For $|\xi|\le1$ we have
$\operatorname{Re}(\xi-1)\le-\varrho^2/2$, so the bracket is at most
\[
  f(\varrho)=\frac{\tau \Gamma}{2}\varrho+
  V\sqrt{\beta^2-
  \frac{\tau(2\beta-\tau)}4\varrho^2}
  \qquad(0\le \varrho\le2).
\]
Write $k=\tau(2\beta-\tau)/4>0$; then
$f'(\varrho)=\tfrac{\tau \Gamma}2-Vk\varrho(\beta^2-k\varrho^2)^{-1/2}$ is decreasing on $[0,2]$,
because $k\varrho$ and $(\beta^2-k\varrho^2)^{-1/2}$ both increase there, so $f'$ attains
its minimum on $[0,2]$ at $\varrho=2$. Since $\beta^2-4k=(\beta-\tau)^2$ and
$\tau<\beta$, we have
$f'(2)=\tau\bigl(\Gamma(\beta-\tau)-V(2\beta-\tau)\bigr)/\bigl(2(\beta-\tau)\bigr)$,
which is nonnegative by hypothesis; hence $f'\ge0$ on $[0,2]$ and the maximum
is attained at $\varrho=2$, where $\sqrt{\beta^2-4k}=\beta-\tau$. Therefore
\[
  \alpha\operatorname{Im}z+\beta\operatorname{Re}w
  \le\frac{\alpha^2}{4\tau}+\tau \Gamma+V(\beta-\tau),
\]
and substituting the specified $\tau$ makes the right-hand side equal to the right-hand side of \eqref{eq:phase-pair}.
\end{proof}

From now on set
\[
  \pi_+=\frac{355}{113},
  \qquad
  u_m=\sqrt{7m+3}.
\]
From \eqref{eq:F-recursion-main}, \eqref{eq:q-basic}, $D_t\le m+1$, and $|F_{2t}|\le m+1$, we obtain, for every nontrivial frequency,
\begin{equation}\label{eq:u-bound}
  |F_t|\le u_m.
\end{equation}

\subsection{The case of no large spoke}\label{ssec:h0}

Let
\[
  v_m^{(0)}=\sqrt{6m+2+u_m}
\]
and define
\begin{equation}\label{eq:U0}
  \mathcal U_0(m)=
  \frac{\pi_+}{2}\sqrt{6m+2-v_m^{(0)}}
  +\frac23v_m^{(0)}+\frac13u_m.
\end{equation}
Applying Lemma~\ref{lem:phase-pair} to the expansion \eqref{eq:Lambda-fourier} with $z=F_1$, $w=F_2$, $\Gamma=6m+2$, $V=v_m^{(0)}$, $\alpha=\pi/2$, $\beta=2/3$, and bounding the tail $\sum_{r\ge2}$ of \eqref{eq:Lambda-fourier} by \eqref{eq:coeff-tail-third} and \eqref{eq:u-bound}, we obtain
\[
  \Lambda\le\mathcal U_0(m).
\]
Here $F_1^2-F_2=2Q(\zeta)+2\bigl(C(\zeta)-1\bigr)$ by the support identity \eqref{eq:support-identity}, so $|F_1^2-F_2|\le2q_1+2D_1\le6m+2=\Gamma$ by \eqref{eq:q-basic} and $D_t\le m+1$; and $|F_2|^2\le6m+2+u_m$ by \eqref{eq:F-recursion-main} and \eqref{eq:u-bound}, so the hypotheses $|z^2-w|\le \Gamma$ and $|w|\le V$ hold. The remaining two hypotheses of Lemma~\ref{lem:phase-pair}, namely $\tau<\beta$ and $\Gamma(\beta-\tau)\ge V(2\beta-\tau)$, hold uniformly in $m$, because $\Gamma$ grows linearly in $m$ while $V$ is of order $\sqrt \Gamma$. It suffices to record the following elementary implication, in which $\alpha=\pi/2$ and $\beta=2/3$ are the values fixed above:
\begin{equation}\label{eq:phase-pair-uniform}
  \Gamma\ge242,\quad V^2\le\tfrac32\Gamma
  \qquad\Longrightarrow\qquad
  \tau<\tfrac1{16}<\beta,\quad \Gamma(\beta-\tau)>V(2\beta-\tau).
\end{equation}
Indeed, $\Gamma\mapsto \Gamma-\sqrt{3\Gamma/2}$ is increasing, and $\sqrt{363}<20$, so
\[
  \Gamma-V\ge \Gamma-\sqrt{\tfrac32\Gamma}\ge242-\sqrt{363}>222>196 ;
\]
hence $\sqrt{\Gamma-V}>14$ and $\tau=\pi/(4\sqrt{\Gamma-V})<\pi/56<\tfrac1{16}$. Consequently $\beta-\tau>\tfrac{29}{48}$ and $2\beta-\tau<\tfrac43$, so the second conclusion follows from $\tfrac{29}{48}\Gamma\ge\tfrac43\sqrt{\tfrac32\Gamma}$, which upon squaring both sides reads $\Gamma\ge\tfrac{6144}{841}$.

Both upper bounds obtained in this section satisfy the hypotheses of \eqref{eq:phase-pair-uniform}. For $\mathcal U_0$, which is applied for $m\ge40$, we have $\Gamma=6m+2\ge242$ and $V^2=\Gamma+u_m$, and $u_m\le\tfrac\Gamma2=3m+1$ reads $9m^2-m-2\ge0$ after squaring. For $\mathcal U_2$ below, which is applied for $m\ge42$, we have $\Gamma=6m-\tfrac32\ge249$ and $V^2=\Gamma+\tfrac52+u_m$, and $u_m\le3m-\tfrac{13}4$ reads $9m^2-\tfrac{53}2m+\tfrac{121}{16}\ge0$ after squaring, a quadratic whose larger root is smaller than $3$. The applications in Section~\ref{sec:boundary}, where $\Gamma$ is of size $2D_1$ rather than $6m$, are verified separately there.

If there is no large spoke, then \eqref{eq:Lambda-lower-general} gives
\[
  \Lambda\ge\mathcal L_0(m):=
  m+1-\frac{2(m+1)}{m(m+3)}.
\]
We follow the conventions of Section~\ref{sec:strategy} for the decimals. At $m=40$,
\[
  u_{40}=16.8226\ldots,
  \qquad
  v_{40}^{(0)}=16.0879\ldots,
\]
\[
  \mathcal U_0(40)<39.943,
  \qquad
  \mathcal L_0(40)>40.9523,
\]
so
\[
  \mathcal L_0(40)-\mathcal U_0(40)>1.009.
\]
Moreover, for $m\ge40$,
\[
  u_m'<\frac7{32},
  \qquad
  \bigl(v_m^{(0)}\bigr)'<\frac15,
  \qquad
  \left(\sqrt{6m+2-v_m^{(0)}}\right)'<\frac15,
\]
so
\[
  \mathcal U_0'(m)<\frac{\pi_+}{10}+\frac2{15}+\frac7{96}<0.54,
\]
while the fraction subtracted in $\mathcal L_0$ is decreasing, so $\mathcal L_0'(m)>1$. Hence we obtain a contradiction for all $m\ge40$.

\subsection{The case of one large spoke}\label{ssec:h1}

Let $H$ be the unique large spoke and let $e=2H-M$ be its excess exponent, as in Section~\ref{sec:support-localization};
this is nonnegative, since $2H>N=M-1$ and $H$ is an integer. Hence
$2\pi H/M=\pi+\pi e/M$, so that
$\sin(2\pi H/M)=-\sin(\pi e/M)\ge-\pi e/M$, and inserting this into
\eqref{eq:Lambda-lower-general}, together with $e\le3m-6$ from
\eqref{eq:e-3m}, $1/M=2/\bigl((m+1)(m+2)\bigr)$, and $\pi<\pi_+$, gives
\begin{equation}\label{eq:L1}
  \Lambda\ge\mathcal L_1(m):=
  \mathcal L_0(m)-
  \frac{2\pi_+^2(3m-6)}{(m+1)(m+2)}.
\end{equation}
The upper bound \eqref{eq:U0} still applies. At $m=41$,
\[
  u_{41}=17.0293\ldots,
  \qquad
  v_{41}^{(0)}=16.2797\ldots,
\]
\[
  \mathcal U_0(41)<40.4409,
  \qquad
  \mathcal L_1(41)>40.6746,
\]
so
\[
  \mathcal L_1(41)-\mathcal U_0(41)>0.233.
\]
The two rational functions subtracted in \eqref{eq:L1} are decreasing for $m\ge41$, so $\mathcal L_1'(m)>1$, and hence we obtain a contradiction for all $m\ge41$.

\subsection{The case of two large spokes}\label{ssec:h2}

Let
\[
  v_m^{(2)}=\sqrt{6m+1+u_m}
\]
and define
\[
  \mathcal U_2(m)=
  \frac{\pi_+}{2}\sqrt{6m-\frac32-v_m^{(2)}}
  +\frac23v_m^{(2)}+\frac13u_m.
\]
From \eqref{eq:D1}, \eqref{eq:D2}, and Lemma~\ref{lem:phase-pair},
\[
  \Lambda\le\mathcal U_2(m).
\]
The excess exponents $e_1=2H-M$ and $e_2=2(H-d)-M$ of the two large spokes
are nonnegative, sum to $2p$ by \eqref{eq:phase-index-p}, and satisfy
$e_2\le e_1=\kappa-1\le3m-1<M$ by \eqref{eq:z-3m}. As in
Section~\ref{ssec:h1}, the two sines in \eqref{eq:Lambda-lower-general} equal
$-\sin(\pi e_1/M)$ and $-\sin(\pi e_2/M)$, and since $\pi e_1/M$ and
$\pi e_2/M$ lie in $[0,\pi]$, the concavity of the sine there gives
$\sin(\pi e_1/M)+\sin(\pi e_2/M)\le2\sin(\pi p/M)\le2\pi p/M$. With
$p\le\Pi_m\le(5m+2)/2$ from \eqref{eq:p-Pm} and
$1/M=2/\bigl((m+1)(m+2)\bigr)$, this yields
\begin{equation}\label{eq:L2}
  \Lambda\ge\mathcal L_2(m):=
  \mathcal L_0(m)-
  \frac{2\pi_+^2(5m+2)}{(m+1)(m+2)}.
\end{equation}
At $m=42$,
\[
  u_{42}=17.2336879\ldots,
  \qquad
  v_{42}^{(2)}=16.4387860\ldots,
\]
\[
  \mathcal U_2(42)<40.7355,
  \qquad
  \mathcal L_2(42)>40.7427.
\]
Hence $\mathcal L_2(42)-\mathcal U_2(42)>0.0072$.

Moreover, for $m\ge42$,
\[
  u_m'<\frac7{34},
  \qquad
  (v_m^{(2)})'<\frac15,
  \qquad
  \left(\sqrt{6m-\frac32-v_m^{(2)}}\right)'<\frac15.
\]
Therefore
\[
  \mathcal U_2'(m)
  <\frac{\pi_+}{10}+\frac{2}{15}+\frac7{102}<1.
\]
On the other hand, the two rational functions subtracted in \eqref{eq:L2} are decreasing, so $\mathcal L_2'(m)>1$. Hence we obtain a contradiction for all $m\ge42$.

The results of this section are summarized as follows; recall that $h$ is the
number of large spokes and that $m=n-1$.

\begin{proposition}\label{prop:uniform-ranges}
In a geodesic Leech wheel $W_{m+1}$ with $h$ large spokes, the following hold.
\begin{enumerate}[(i)]
\item If $h=0$, then $m\le39$, that is, $n\le40$.
\item If $h=1$, then $m\le40$, that is, $n\le41$.
\item If $h=2$, then $m\le41$, that is, $n\le42$.
\end{enumerate}
\end{proposition}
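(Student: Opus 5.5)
This proposition only assembles the three subsections of Section~\ref{sec:uniform-fourier}. The plan is to check that, for each $h\in\{0,1,2\}$, the comparison made there excludes the stated range of $m$. In every case the argument has the same shape. The lower bound \eqref{eq:Lambda-lower-general}, sharpened by the localization of the large spokes, gives $\Lambda\ge\mathcal L_h(m)$. The Fourier expansion \eqref{eq:Lambda-fourier}, bounded via Lemma~\ref{lem:phase-pair} and the tail estimate \eqref{eq:coeff-tail-third} with \eqref{eq:u-bound}, gives $\Lambda\le\mathcal U_h(m)$, where $\mathcal U_1=\mathcal U_0$. It then suffices to show $\mathcal L_h(m)>\mathcal U_h(m)$ on the relevant range, which contradicts the existence of the labeling.

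For (i), assume $h=0$ and $m\ge40$. The hypotheses of Lemma~\ref{lem:phase-pair} are supplied by \eqref{eq:phase-pair-uniform}, with $\Gamma=6m+2$ and $V=v_m^{(0)}$. The base comparison is $\mathcal L_0(40)-\mathcal U_0(40)>1.009$. I then propagate it by the derivative bounds $\mathcal U_0'<0.54<1<\mathcal L_0'$: the difference $\mathcal L_0-\mathcal U_0$ is increasing on $[40,\infty)$, so it stays positive. Hence $m\le39$.

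For (ii), the one new ingredient is the lower bound $\mathcal L_1$ of \eqref{eq:L1}, which uses $e\le 3m-6$ from Lemma~\ref{lem:one-large-position}; that lemma requires $m\ge40$, which holds here. The upper bound $\mathcal U_0$ is unchanged, because its derivation used only $D_t\le m+1$. The base comparison at $m=41$ is $\mathcal L_1(41)-\mathcal U_0(41)>0.233$. Monotonicity of the difference follows as before, since the subtracted rational terms decrease and $\mathcal U_0'<0.54$.

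For (iii), the improved upper bound $\mathcal U_2$ rests on Lemma~\ref{lem:D12}, which bounds $D_1$ and $D_2$. These feed $\Gamma=2\cdot2m+2D_1\le 6m-\tfrac32$ into the phase-pair inequality and, through \eqref{eq:F-recursion-main}, give $|F_2|^2\le 6m+1+u_m$. The lower bound $\mathcal L_2$ is \eqref{eq:L2}, which comes from the concavity argument combined with $p\le\Pi_m$ from Lemma~\ref{lem:two-large-position}. The base comparison at $m=42$ is the very thin margin $0.0072$. The derivative comparison $\mathcal U_2'<1<\mathcal L_2'$ then finishes the argument.

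The main obstacle is the base comparisons, above all the one at $m=42$ for $h=2$. The margin there is so small that the rational brackets for $\pi$ (via $\pi_+$), for the square roots, and for the truncated decimal values must each be checked rigorously by squaring, with no rounding slack. I would verify these by exact rational arithmetic first, and only afterwards handle the routine derivative bounds.
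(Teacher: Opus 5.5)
Your proposal is correct and takes the same route as the paper. There, the proposition is just the conclusion of Sections~\ref{ssec:h0}--\ref{ssec:h2}: each case compares $\mathcal L_h$ with $\mathcal U_0$ (for $h\le1$) or $\mathcal U_2$ at the base values $m=40,41,42$, and extends the comparison to all larger $m$ by the derivative bounds. The one step you leave implicit is that for $h=2$ the hypotheses of Lemma~\ref{lem:phase-pair} also need checking. They follow from \eqref{eq:phase-pair-uniform}, since $\Gamma=6m-\tfrac32\ge249$ and $V^2=\Gamma+\tfrac52+u_m\le\tfrac32\Gamma$.
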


\begin{proof}
Part~(i) is the conclusion of Section~\ref{ssec:h0}, part~(ii) that of
Section~\ref{ssec:h1}, and part~(iii) that of Section~\ref{ssec:h2}.
\end{proof}

\section{The six-variable Parseval argument for the last three boundary cases}\label{sec:boundary}

After Proposition~\ref{prop:uniform-ranges}, the only remaining cases are
\[
  (m,h)=(40,1),\quad(40,2),\quad(41,2),
\]
that is, the wheels $W_{41}$ and $W_{42}$. In this section we do not optimize all the Fourier coefficients individually; we keep only six coordinates. The six retained frequencies $1,2,4,6,8,12$ are exactly those at which $q_t$ occurs in the majorant \eqref{eq:Psi}: the frequencies $1$ and $2$ enter through Lemma~\ref{lem:phase-pair}, and the pairs $4,8$ and $6,12$ through the two-step bounds for $|F_4|$ and $|F_6|$. No further coordinate is produced by the tail, because the only $r\in\{4,\ldots,R\}$ whose frequencies $2r$ or $4r$ reduce, up to sign modulo $M$, into this list are the finitely many recorded after \eqref{eq:Y-tail}. Retaining further coordinates could be expected to widen the final margin, but this is not needed. The tangent points used below were located by numerically maximizing $\Psi$ over $\mathcal D$ and rounding the maximizer to two decimal places. By Lemma~\ref{lem:psi-concave} the tangent-plane bound \eqref{eq:tangent-master} is valid at every point of $\mathcal D$, so the argument does not depend on the maximizer having been found exactly: only the tabulated rational inequalities are used in the proof.

\subsection{The Parseval identity for the residual set}

For $m=40,41$ the modulus $M$ is odd. Recall from \eqref{eq:q-basic} that,
at the nontrivial frequencies,
\[
  q_t=\abs*{\sum_{y\in T}\zeta^{ty}}
\]
for the residual set $T$. Parseval's identity and conjugate symmetry give
\begin{equation}\label{eq:parseval-q}
  \sum_{t=1}^{(M-1)/2}q_t^2
  =E_m:=m(M-2m).
\end{equation}
Moreover, $t=1,2,4$ are coprime to $M$. The absolute value of a sum of $k$ distinct $M$-th roots of unity is largest
when the $k$ roots are consecutive. Indeed, for $J\subseteq\Z/M\Z$ with
$\abs J=k$ we have
$\abs*{\sum_{j\in J}\zeta^{j}}=\max_{\phi}\sum_{j\in J}\cos(2\pi j/M-\phi)$.
For each fixed $\phi$ the inner sum is maximal over the $k$-element
subsets when $J$ collects $k$ roots whose angles are nearest to $\phi$. Such
a set may be taken to consist of the roots lying in an arc centered at $\phi$,
and since the $M$-th roots are equally spaced, it is then a cyclically
consecutive block. Interchanging the two maxima, the largest value is therefore attained at
a consecutive block, for which the absolute value is
$\abs*{\sin(k\pi/M)}/\sin(\pi/M)$. Since $\abs T=2m$ and $0<2m<M$, this
gives
\begin{equation}\label{eq:primitive-char}
  q_1,q_2,q_4\le
  c_m^*:=\frac{\sin(2m\pi/M)}{\sin(\pi/M)}.
\end{equation}
Taylor inequalities and $103993/33102<\pi<355/113$ give
\begin{equation}\label{eq:cstar-bounds}
  c_{40}^*<78.87,
  \qquad
  c_{41}^*<80.90.
\end{equation}

Now set
\[
  x=(q_1,q_2,q_4,q_6,q_8,q_{12}),
  \qquad
  \eta(x)=\sqrt{E_m-\|x\|_2^2}.
\]
The elementary uniform bound obtained from \eqref{eq:F-recursion-main} is
\[
  U_m^*=\frac{1+\sqrt{24m+9}}2.
\]
Define the following two-step bound function.
\[
  \cV_m(y,y')=
  \sqrt{2y+2m+2+
  \sqrt{2y'+2m+2+U_m^*}}.
\]
Also set
\[
  R=\frac{M-1}{2},
  \qquad
  \omega_m=\sum_{r=4}^{R}c_r=\frac17-\frac1M.
\]
Let $\sigma_M$ be the unique integer with $1\le\sigma_M\le R$ and $4\sigma_M\equiv\pm1\pmod M$. For $M=861$ we have $\sigma_M=215$, and for $M=903$ we have $\sigma_M=226$.

Define
\begin{align}
 \cX_m(x)&=
 \frac{c_Rq_1+c_4q_8+c_6q_{12}+\frac{267}{10000}\eta(x)}{\omega_m},
 \label{eq:X-tail}\\
 \cY_m(x)&=
 \frac{c_{\sigma_M}q_1+c_Rq_2+c_{R-1}q_6+\frac{219}{5000}\eta(x)}{\omega_m}.
 \label{eq:Y-tail}
\end{align}
Here the remaining index sets are
\[
  \mathcal I_2=\{5\}\cup\{7,8,\ldots,R-1\},
  \qquad
  \mathcal I_4=\{4,5,\ldots,R\}\setminus\{\sigma_M,\,R-1,\,R\},
\]
namely those $r\in\{4,\ldots,R\}$ whose frequencies $2r$ (for $\cX_m$) and $4r$ (for $\cY_m$) do not reduce, up to sign modulo $M$, to one of the six retained frequencies $1,2,4,6,8,12$. A direct computation of the coefficient square sums gives, for both values of $M$,
\[
  \Bigl(\sum_{r\in\mathcal I_2}c_r^2\Bigr)^{1/2}<\frac{267}{10000},
  \qquad
  \Bigl(\sum_{r\in\mathcal I_4}c_r^2\Bigr)^{1/2}<\frac{219}{5000}.
\]
This is verified by adding the rational terms with $r\le20$ directly and bounding the tail via $c_r<2/(2r-1)^2$, which gives
\[
  \sum_{r\ge21}c_r^2
  <4\int_{20}^{\infty}\frac{du}{(2u-1)^4}
  =\frac{2}{3\cdot39^{3}}
  <\frac1{88000}.
\]
Therefore, by Cauchy--Schwarz and \eqref{eq:parseval-q}, the quantities \eqref{eq:X-tail} and \eqref{eq:Y-tail} are upper bounds for the $c_r$-weighted averages of the $q_{2r}$ and of the $q_{4r}$ appearing in the tails, respectively.

\subsection{The concave majorant}

According to the case, let $\Db_1,\Db_2$ be the following upper bounds for the
quantities $D_1,D_2$ of Section~\ref{sec:support-localization}:
\[
  (\Db_1,\Db_2)=
  \begin{cases}
  (m+1,m+1),&h=1,\\
  (m-\frac34,m+\frac12),&h=2.
  \end{cases}
\]
For $h=1$ these are the bounds $D_t\le m+1$ valid at every frequency; for $h=2$
they are Lemma~\ref{lem:D12}, whose hypotheses hold at $(40,2)$ and $(41,2)$.
Replacing $D_1,D_2$ by $\Db_1,\Db_2$ below is thus an inequality, and it is the
only place in this section where one is used implicitly.
Also set
\[
  v_2(x)=
  \sqrt{2q_2+2\Db_2+
  \sqrt{2q_4+2m+2+U_m^*}}.
\]
By \eqref{eq:F-recursion-main}, Lemma~\ref{lem:phase-pair}, Jensen's inequality, and \eqref{eq:X-tail}--\eqref{eq:Y-tail}, the entire Fourier right-hand side is bounded above by the following function.
\begin{align}
 \Psi_{m,\Db_1,\Db_2}(x)={}&
 \frac{\pi_+}{2}\sqrt{2q_1+2\Db_1-v_2(x)}
 +\frac23v_2(x)\notag\\
 &+c_2\cV_m(q_4,q_8)
 +c_3\cV_m(q_6,q_{12})
 +\omega_m\cV_m(\cX_m(x),\cY_m(x)).
 \label{eq:Psi}
\end{align}
Here $\Gamma$ is of size $2\Db_1$ rather than $6m$, so \eqref{eq:phase-pair-uniform} does not apply and the hypotheses of Lemma~\ref{lem:phase-pair} are verified directly instead: uniformly over $0\le q_1,q_2,q_4\le c_m^*$ one has $\tau<0.1<\beta$ and $\Gamma(\beta-\tau)>44>21>V(2\beta-\tau)$ in each of the three cases. (The extreme values are attained at $q_1=0$, $q_2=q_4=c_m^*$ for $\tau$ and for $\Gamma(\beta-\tau)$, and at $q_1=q_2=q_4=c_m^*$ for $V(2\beta-\tau)$.)

We work on the convex domain
\[
  \mathcal D=\set{x\in\R^6:\ x\ge0,\ \|x\|_2^2\le E_m},
\]
which contains the true vector of moduli and every tangent point used below;
on the ball alone the radicands of \eqref{eq:Psi} need not be positive.

\begin{lemma}[Concavity of the majorant]\label{lem:psi-concave}
The function $\Psi_{m,\Db_1,\Db_2}$ is concave on $\mathcal D$.
\end{lemma}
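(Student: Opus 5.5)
The plan is to build $\Psi$ from a few standard concavity-preserving operations and check each one on $\mathcal D$. The basic facts are these:
\begin{itemize}
\item[(a)] If $g$ is concave and positive, then $\sqrt g$ is concave, since $\sqrt{\cdot}$ is concave and nondecreasing.
\item[(b)] If $g$ is concave and $\phi$ is concave and nondecreasing, then $\phi\circ g$ is concave.
\item[(c)] Sums of concave functions with nonnegative weights are concave.
\item[(d)] The map $x\mapsto\eta(x)=\sqrt{E_m-\|x\|_2^2}$ is concave on the ball, as the composition of $\sqrt{\cdot}$ with a concave quadratic.
\end{itemize}

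First I would treat $\cV_m(y,y')$ as a function of two variables. The inner map $y'\mapsto\sqrt{2y'+2m+2+U_m^*}$ is concave and nondecreasing. Adding $2y+2m+2$ keeps the result jointly concave and nondecreasing in each argument, and taking the outer square root preserves both properties. So $\cV_m$ is jointly concave and nondecreasing in each argument wherever it is defined, and it is defined on $y,y'\ge0$. Consequently $c_2\cV_m(q_4,q_8)$ and $c_3\cV_m(q_6,q_{12})$ are concave, being concave functions of linear coordinate maps.

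Next I would note that $\cX_m$ and $\cY_m$ are concave and nonnegative on $\mathcal D$. Each is a nonnegative linear combination of coordinates plus a positive multiple of $\eta$, which is concave by (d). By the monotone composition rule (b), applied to the nondecreasing concave $\cV_m$, the term $\omega_m\cV_m(\cX_m,\cY_m)$ is concave. The same reasoning shows that $v_2(x)$ is concave, with the nonnegative constants $\Db_2$, $m+2+U_m^*$ inside. The term $\tfrac23 v_2$ is therefore concave.

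The delicate term is $\tfrac{\pi_+}{2}\sqrt{2q_1+2\Db_1-v_2(x)}$, because $v_2$ enters with a minus sign. Since $v_2$ is concave, $-v_2$ is convex, so the radicand $2q_1+2\Db_1-v_2$ is convex, and (a) does not apply. I would handle this by combining it with $\tfrac23v_2$ and viewing the sum as $g(q_1,v_2(x))$, where
\[
  g(a,v)=\tfrac{\pi_+}{2}\sqrt{2a+2\Db_1-v}+\tfrac23 v.
\]
The function $g$ is jointly concave, being a square root of an affine function plus a linear term. For the composition to be concave we need $g$ nondecreasing in $v$ on the relevant range, that is,
\[
  \tfrac23\ge\frac{\pi_+}{4\sqrt{2q_1+2\Db_1-v_2}},
\]
which amounts to $2q_1+2\Db_1-v_2\ge(3\pi_+/8)^2\approx1.39$. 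Given this, $g(q_1,v_2(x))$ is concave because $g$ is concave, nondecreasing in its second argument, and $v_2$ is concave.

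The main obstacle is verifying this positivity margin uniformly on $\mathcal D$. I would bound $v_2$ from above using $q_2,q_4\le\sqrt{E_m}$ on $\mathcal D$, or more sharply by maximizing $2q_2+\sqrt{2q_4+\cdots}$ on the ball. This gives $v_2=O(m^{3/4})$, roughly at most $\sqrt{2\cdot 29\cdot 40}\approx 50$ in the crude form. On the other side, $2\Db_1\ge 2m-\tfrac32\ge78$ and $q_1\ge0$. It then remains to check the rational inequality $2\Db_1-\max_{\mathcal D}v_2>1.4$ in each of the three cases $(m,h)$. If the crude bound $q_2\le\sqrt{E_m}$ turns out to be too weak, I would use the constraint $q_2^2+q_4^2\le E_m$ jointly and maximize explicitly. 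The same estimate also confirms that all radicands are positive on $\mathcal D$. Summing the concave pieces by (c) then gives the concavity of $\Psi_{m,\Db_1,\Db_2}$.
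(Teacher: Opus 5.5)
Your proposal is correct and is essentially the paper's proof. The last three summands are handled by the monotone concave composition rule. The first two are grouped as $\psi(q_1,v_2(x))$, with $\psi$ jointly concave and nondecreasing in its second argument once $\Theta\ge(3\pi_+/8)^2$, and this is checked on $\mathcal D$ via $q_2,q_4\le\sqrt{E_m}$. One small correction: that crude bound actually gives $v_2<21.8$, not roughly $50$, so $\Theta>2\Db_1-21.8>56$, an even wider margin than your estimate suggests.
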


\begin{proof}
The last three summands are covered by the elementary composition rule: $\cV_m$ is
concave and nondecreasing in each argument, $\eta(x)=\sqrt{E_m-\|x\|_2^2}$ is
concave on $\mathcal D$, and $\cX_m,\cY_m$ are affine functions of $x$ plus a positive
multiple of $\eta$, hence concave; so $\cV_m(\cX_m,\cY_m)$, $\cV_m(q_4,q_8)$, and
$\cV_m(q_6,q_{12})$ are concave.

The first two summands must be treated together, because the radicand of the
first is an affine function \emph{minus} the concave function $v_2$, hence convex,
and the first summand is decreasing in $q_2$ and $q_4$. Introduce
\[
  \psi(q_1,s)=\frac{\pi_+}2\sqrt{2q_1+2\Db_1-s}+\frac23s ,
\]
so that the two summands equal $\psi\bigl(q_1,v_2(x)\bigr)$. Now
$(q_1,s)\mapsto2q_1+2\Db_1-s$ is affine, so its square root is jointly concave, and
adding the linear term $\tfrac23s$ leaves $\psi$ jointly concave in $(q_1,s)$.
Moreover, writing $\Theta=2q_1+2\Db_1-s$,
\[
  \frac{\partial\psi}{\partial s}=\frac23-\frac{\pi_+}{4\sqrt\Theta}\ge0
  \qquad\text{as soon as}\qquad
  \Theta\ge\Bigl(\frac{3\pi_+}8\Bigr)^2=1.387\ldots,
\]
so $\psi$ is nondecreasing in $s$ on the relevant range. This requirement is satisfied on all of $\mathcal D$:
on $\mathcal D$ one has $q_2,q_4\le\sqrt{E_m}$, whence
\[
  v_2(x)\le\sqrt{2\sqrt{E_m}+2\Db_2+\sqrt{2\sqrt{E_m}+2m+2+U_m^*}}<21.8
\]
and therefore $\Theta\ge2\Db_1-21.8>56$ in each of the three cases $(m,h)$, the smallest value being $\Theta>56.7$ at $(m,h)=(40,2)$, where $\Db_1=\tfrac{157}4$. Since $q_1$ is
an affine function of $x$ and $v_2$ is concave on $\mathcal D$, the composition of the
jointly concave, $s$-nondecreasing $\psi$ with $(q_1,v_2(x))$ is concave. Adding the
three concave summands above gives the claim.
\end{proof}

For any $x_0$ in the interior of $\mathcal D$, let $g=\nabla\Psi(x_0)$ and $A_0=\Psi(x_0)-g\cdot x_0$. By Lemma~\ref{lem:psi-concave},
\[
  \Psi(x)\le A_0+g\cdot x
  \qquad(x\in\mathcal D).
\]
Using \eqref{eq:primitive-char}, Cauchy--Schwarz, and the fact that the three
tangent points below all have $g_1,g_2,g_3>0$ (see Table~\ref{tab:app-gradients}), so that $q_i\le c_m^*$ may be substituted in the
first three coordinates,
\begin{equation}\label{eq:tangent-master}
  \Psi(x)
  \le A_0+c_m^*(g_1+g_2+g_3)
  +\sqrt{E_m}\,\|(g_4,g_5,g_6)\|_2.
\end{equation}

\subsection{Three explicit tangent planes}

The results of this section are collected in the following statement, which
is cited in the proof of Theorem~\ref{thm:main}.

\begin{proposition}[The three boundary cases]\label{lem:boundary}
For $(m,h)\in\{(40,1),(40,2),(41,2)\}$, no geodesic Leech labeling of $W_{m+1}$
has exactly $h$ large spokes; that is, the three boundary cases left open by
Proposition~\ref{prop:uniform-ranges} are impossible.
\end{proposition}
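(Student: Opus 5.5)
In each of the three cases I would argue by contradiction: I would play the lower bound \eqref{eq:Lambda-lower-general} for $\Lambda$ off against the concave majorant $\Psi$. On the lower side I take $\mathcal L_1(40)$ in the case $(40,1)$, and $\mathcal L_2(40)$, $\mathcal L_2(41)$ in the cases $(40,2)$, $(41,2)$. Since $\mathcal L_2$ uses $p\le\Pi_m$ from \eqref{eq:p-Pm}, and at $m=40$ this holds with no slack (Remark~\ref{rem:p-tight}), I use exactly $p\le101$ there, and $p\le102$ at $m=41$ from Table~\ref{tab:kappa-maxima}. For $h=1$ I use $e\le115$ from the iteration in Lemma~\ref{lem:one-large-position}, rather than $3m-6$, wherever the margin is tight. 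I would also keep $\sin(\pi p/M)$ itself instead of the linearization $\pi p/M$ if that helps. This gives an explicit rational lower bound for $\Lambda$, a little above $40$, in each case.

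For the upper bound, I first write the Fourier expansion \eqref{eq:Lambda-fourier}. I then check that the whole right-hand side is at most $\Psi_{m,\Db_1,\Db_2}(x)$, with $x=(q_1,q_2,q_4,q_6,q_8,q_{12})$ the true moduli vector, which lies in $\mathcal D$ by \eqref{eq:parseval-q}. The terms split as follows.
\begin{itemize}
\item The frequencies $1,2$ go through Lemma~\ref{lem:phase-pair}, with $\Gamma=2q_1+2\Db_1$ and $V=v_2(x)$.
\item The terms $r=2,3$ go through the two-step bounds $|F_4|\le\cV_m(q_4,q_8)$ and $|F_6|\le\cV_m(q_6,q_{12})$, each obtained by applying \eqref{eq:F-recursion-main} twice and then the uniform bound $U_m^*$.
\item The tail $r\ge4$ goes through Jensen applied to the concave $\cV_m$ with weights $c_r/\omega_m$. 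The averaged $q_{2r}$ and $q_{4r}$ are then bounded by $\cX_m$ and $\cY_m$, using Cauchy--Schwarz against $\eta(x)$ and the coefficient square-sum bounds.
\end{itemize}

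By Lemma~\ref{lem:psi-concave}, I then pick a rational tangent point $x_0$ for each case, near the numerical maximizer of $\Psi$ over $\mathcal D$, and compute $A_0$ and $g=\nabla\Psi(x_0)$ with rigorous rational brackets. I verify $g_1,g_2,g_3>0$ and apply \eqref{eq:tangent-master}, using $c_m^*$ from \eqref{eq:cstar-bounds} and $E_m=m(M-2m)$. This yields $\Lambda\le\Psi(x)\le B_{m,h}$ for an explicit number $B_{m,h}$. I expect the margin $\mathcal L_h(m)-B_{m,h}$ to be small but positive. The contradiction finishes each case, and together with Proposition~\ref{prop:uniform-ranges} this closes the proof of Theorem~\ref{thm:main}.

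The main obstacle is the certified numerics. Every square root, sine and $\pi$ appearing in $\Psi(x_0)$ and in its six partial derivatives must be replaced by rational upper or lower bounds in the correct direction, and the resulting slack must stay below the margin. This is most delicate at $(40,2)$, where the localization bound has no slack. If the margin fails there, I would first try moving the tangent point, since any point of $\mathcal D$ gives a valid bound. Failing that, I would recover the slack in $\mathcal L_2$ by keeping the exact concavity bound $2\sin(\pi p/M)$ instead of $2\pi p/M$.
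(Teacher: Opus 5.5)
Your plan is the paper's proof: the same majorant $\Psi$, the same tangent-plane bound \eqref{eq:tangent-master}, and a lower bound for $\Lambda$ from \eqref{eq:Lambda-lower-general} via the concavity of the sine. One point needs correcting, because in the order you give, your plan would first fail at $(40,2)$ and your first fallback cannot rescue it.

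\textbf{The sine must be kept at $(40,2)$.} The linearized bound there is
\[
  \mathcal L_2(40)=40.9523\ldots-\frac{2\pi_+^2\cdot202}{1722}\approx38.637 .
\]
The tangent point $x_{40,2}=(78.86,78.86,78.86,75.64,60.64,27.20)$ satisfies $q_1,q_2,q_4\le c_{40}^*\approx78.869$ and lies in $\mathcal D$, and $\Psi(x_{40,2})\approx38.678$ there. Any upper bound extracted from $\Psi$ over the admissible set, whatever tangent point you choose, is therefore at least about $38.678$. So moving the tangent point cannot beat $\mathcal L_2(40)$.

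Keeping $\sin(\pi p/M)$ is thus not optional but necessary. The paper does this with the Taylor upper bound $\sin u\le S_5(u)=u-u^3/6+u^5/120$ and gets
\[
  \tfrac{860}{861}\cdot41-2\pi_+S_5(101\pi_+/861)>38.6889
\]
against $\Psi<38.682$. The linearization costs about $0.052$ against a margin of only about $0.007$. This is also why $p\le\Pi_{40}=101$ must be used at full strength there.

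\textbf{Two smaller slips.} First, the iteration in Lemma~\ref{lem:one-large-position} gives $\kappa\le115$, that is, $e\le114=3m-6$. Your $e\le115$ is therefore weaker than \eqref{eq:e-3m}, not stronger. This is harmless, since $(40,1)$ has a margin of roughly $0.75$ even with linearization.

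Second, the lower bounds are about $38.69$, $39.68$ and $39.75$, not ``a little above $40$''. What matters is only that they exceed the tangent-plane values $38.682$, $38.886$ and $39.196$.

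Your use of $p\le102$ at $m=41$, taken from Table~\ref{tab:kappa-maxima}, is valid but not needed; the paper uses $103$.
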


\begin{proof}
We use the following rational tangent points.
\[
\begin{aligned}
 x_{40,2}&=(78.86,78.86,78.86,75.64,60.64,27.20),\\
 x_{40,1}&=(78.86,78.86,78.86,75.61,60.66,27.24),\\
 x_{41,2}&=(80.89,80.89,80.89,79.75,64.12,28.80).
\end{aligned}
\]
Differentiating \eqref{eq:Psi} directly, and verifying upper and lower bounds for each square root by squaring both sides, yields the rational inequalities of Table~\ref{tab:tangent-bounds}.

\begin{table}[ht]
\centering
\caption{Rational bounds on the tangent-plane data at the three boundary
cases.}\label{tab:tangent-bounds}
\begin{tabular}{c@{\qquad}c@{\qquad}c@{\qquad}c@{\qquad}c}
\toprule
$(m,h)$ & $A_0<$ & $g_1+g_2+g_3<$ & $\|(g_4,g_5,g_6)\|_2<$ & $\Psi<$\\
\midrule
$(40,2)$ & $27.444$ & $0.14247$ & $0.00000359$ & $38.682$\\
$(40,1)$ & $27.717$ & $0.14159$ & $0.00000503$ & $38.886$\\
$(41,2)$ & $27.782$ & $0.14107$ & $0.00000399$ & $39.196$\\
\bottomrule
\end{tabular}
\end{table}

For example, the first row follows from \eqref{eq:tangent-master}, \eqref{eq:cstar-bounds}, and $\sqrt{31240}<177$ via
\[
  \Psi<27.444+78.87(0.14247)+177(0.00000359)<38.682.
\]
The other two rows use $\sqrt{31240}<177$ and $\sqrt{33661}<184$, respectively. All decimals in Table~\ref{tab:tangent-bounds} are terminating, hence exact rational numbers. Appendix~\ref{app:tangent} records the closed-form partial derivatives of $\Psi$ and the intermediate values at the three tangent points, so that each entry of Table~\ref{tab:tangent-bounds} can be checked without repeating the differentiation.

On the other hand, let $S_5(u)=u-u^3/6+u^5/120$; then $\sin u\le S_5(u)$ for $0\le u\le1$. Using \eqref{eq:p-Pm}, \eqref{eq:e-3m}, $\pi<\pi_+$, and the concavity of the sine,
with $p\le\Pi_{40}=101$ and $p\le\Pi_{41}=103$ in the two cases with $h=2$ and
$e\le3\cdot40-6=114$ in the case $h=1$, we obtain the lower bounds for $\Lambda$ recorded in Table~\ref{tab:lambda-lower}.
\begin{table}[ht]
\centering
\caption{Lower bounds for $\Lambda$ at the three boundary
cases.}\label{tab:lambda-lower}
\begin{tabular}{c@{\qquad}c}
\toprule
$(m,h)$ & lower bound for $\Lambda$\\
\midrule
$(40,2)$ &
$\displaystyle
\frac{860}{861}\,41
-2\pi_+S_5\!\left(\frac{101\pi_+}{861}\right)
>38.6889$\\[3mm]
$(40,1)$ &
$\displaystyle
\frac{860}{861}\,41
-\pi_+S_5\!\left(\frac{114\pi_+}{861}\right)
>39.6829$\\[3mm]
$(41,2)$ &
$\displaystyle
\frac{902}{903}\,42
-2\pi_+S_5\!\left(\frac{103\pi_+}{903}\right)
>39.7498$\\
\bottomrule
\end{tabular}
\end{table}
In each of the three cases, $\Lambda$ is bounded above by the tangent-plane value and below by the corresponding entry of Table~\ref{tab:lambda-lower}, and the lower bound exceeds the upper bound. This contradiction proves the proposition.
\end{proof}

We can now prove the main theorem, which states, equivalently, that no wheel with $n\ge41$ is geodesic Leech.

\begin{proof}[Proof of Theorem~\ref{thm:main}]
Let $m=n-1$. The number $h$ of large spokes is $0$, $1$, or $2$. Proposition~\ref{prop:uniform-ranges} excludes all $m\ge40$ with $h=0$, all $m\ge41$ with $h=1$, and all $m\ge42$ with $h=2$. Proposition~\ref{lem:boundary} excludes the remaining cases $(m,h)=(40,1),(40,2),(41,2)$. Hence no $m\ge40$ occurs, and $n=m+1\le40$.
\end{proof}

\section{\texorpdfstring{Explicit constructions for $W_7$ through $W_{13}$}{Explicit constructions for W7 through W13}}\label{sec:certificates}

This section proves the left inclusion of Corollary~\ref{cor:range}. In Table~\ref{tab:certificates}, each sequence is written in cyclic order along the rim. Here $A$ gives the spoke labels and $B$ the rim labels. The rows $W_7,\ldots,W_{13}$ are new; the rows $W_5$ and $W_6$ transcribe the two labelings of~\cite[Figure~3]{LakshmananManattu2025} into the same format, so that all the labelings underlying Corollary~\ref{cor:range} are exhibited in one place.

The new rows were found by a computer search organized by
Proposition~\ref{thm:frame-completion}, which splits the problem exactly into the
choice of an admissible spoke frame $A$ and the choice of a residual cyclic
completion of $A$. The two halves are of unequal difficulty. Completing a given
frame is the easier step: the completion is found by backtracking, and
$\min T(A)$ must be a rim label, after which each admissible $b_k$ is
constrained by the requirement that $b_{k-1}+b_k$ lie again in $T(A)$, so
the search tree is small. Choosing the frame is the harder step: it is a
dense Sidon-type packing in which the $m(m-3)/2$ pair sums must fit without
repetition into the $m(m+1)/2$ values not occupied by the labels; the arithmetic
constraints of Appendix~\ref{app:identities}, in particular the congruence
$\sum_{t\in T(A)}t\equiv0\pmod 3$ of~\eqref{eq:residual-sum}, eliminate candidate
frames before any completion is attempted. The cost of the frame search grows
rapidly with $m$. The reimplementation in the companion repository anneals over
the underlying \emph{set} of spoke labels, with a cost that vanishes exactly
when certain necessary conditions hold for the set to admit a cyclic order
making it an admissible frame, among them the congruence above, and then
attempts to assemble each set of cost zero into such an order. With its default
seed, on one core of a desktop processor (Intel Core i5-14400F), it finds
geodesic Leech labelings of $W_5,\ldots,W_9$ in under a second each, of
$W_{10}$ in $9$ seconds, of $W_{11}$ in $47$ seconds, and of $W_{12}$ in $4$
minutes; for $W_{13}$, eight seeds run in parallel produced a labeling after
$4.6$ minutes, about $37$ processor-minutes in all. For $W_{14}$ we ran a
compiled port of the same search, whose agreement with the reference
implementation was checked on identical inputs, on eight seeds for $18$ hours
each, $144$ processor-hours in all. Its $1.18\times10^{12}$ annealing steps
produced $6.0\times10^{6}$ sets of cost zero, and not one of them assembled
into an admissible spoke frame, so no rim completion was ever attempted: at
$m=13$ these necessary conditions are far from sufficient. The labelings found
for $W_5,\ldots,W_{13}$ differ from those of Table~\ref{tab:certificates},
which are far from unique. Deciding $W_{14}$ remains open; see Section~\ref{sec:conclusion}.

The companion repository \url{https://github.com/junyeobe0315/geodesic-leech-wheels}
provides the labelings in machine-readable form, together with code that
verifies them directly from the weighted graph and an independent
reimplementation of the search (not the program that produced
Table~\ref{tab:certificates}) for the cases left open in
Section~\ref{sec:conclusion}. No result of this paper depends on the search: the
existence claims rest on the explicit labelings of Table~\ref{tab:certificates},
whose geodesic weight classes are listed in Appendix~\ref{app:partitions}.

\begin{theorem}\label{thm:explicit}
For every $5\le n\le13$, the sequences of Table~\ref{tab:certificates} form a geodesic Leech labeling of $W_n$. In particular $\set{5,6,\ldots,13}\subseteq\calE$.
\end{theorem}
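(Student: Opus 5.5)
The plan is to reduce the statement, for each $n$ separately, to the finite check \eqref{eq:wheel-master}. Proposition~\ref{prop:wheel-geodesics} shows that the geodesic weights are exactly the four listed classes, so nothing about geodesics needs to be verified graph-theoretically. Equivalently, by Proposition~\ref{thm:frame-completion}, I would check two things for each row $(A,B)$ of Table~\ref{tab:certificates} with $m=n-1$:
\begin{enumerate}[(a)]
\item $A$ is an admissible spoke frame. That is, the $m$ labels $a_i$ and the $m(m-3)/2$ sums $a_i+a_j$ over rim-nonadjacent pairs (those with $j-i\not\equiv\pm1 \pmod m$) are pairwise distinct and lie in $[N]$, where $N=m(m+3)/2$.
\item $B$ is a residual cyclic completion of $A$. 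That is, the multiset of the $b_i$ together with the cyclic sums $b_i+b_{i+1}$ equals $T(A)=[N]\setminus\Phi(A)$.
\end{enumerate}

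In practice the cleanest certificate is the one recorded in Appendix~\ref{app:partitions}. For each $n$ it lists three weight classes: the spoke labels together with the hub-type sums, the rim labels, and the consecutive rim sums. I would confirm three facts about these lists:
\begin{enumerate}[(a)]
\item Each listed value is correctly computed from the table.
\item The lists have sizes $m(m-1)/2$, $m$ and $m$, which add up to $N$.
\item Their union contains every integer from $1$ to $N$.
\end{enumerate}
Because the total size is $N$ and every integer in $[N]$ is covered, the values are automatically pairwise distinct. This avoids a separate distinctness argument.

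For $n=5,6$ the rows transcribe the labelings of \cite[Figure~3]{LakshmananManattu2025}, so one only has to check that the transcription is faithful to the same cyclic order. The same finite verification covers these two rows too.

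There is no conceptual obstacle here. The only real risk is bookkeeping, namely using the correct cyclic adjacency, including the wrap-around pair $\{0,m-1\}$ and the sum $b_{m-1}+b_0$. I would guard against this by rerunning the direct check with the verification code in the companion repository, which reads the weighted graph as input.

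Finally, the ``in particular'' clause follows at once from the definition of $\calE$. Combined with Theorem~\ref{thm:main}, it yields Corollary~\ref{cor:range}.
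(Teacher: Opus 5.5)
Your proposal is correct and is the paper's own proof: for each row, compute the four classes of \eqref{eq:wheel-master} and check that together they are exactly $[N]$, using Appendix~\ref{app:partitions} as the certificate. One correction: the appendix does not group the weights as $\Phi(A)$, $\{b_i\}$, $\{b_i+b_{i+1}\}$ with sizes $m(m-1)/2,\,m,\,m$, but as the one-edge weights (spoke and rim labels together, $2m$ values), the consecutive rim sums ($m$ values) and the hub-type sums ($m(m-3)/2$ values); your pigeonhole argument is unaffected because the total is still $N$, provided you count the computed values with multiplicity, since the appendix prints sets and would hide a repeated value.
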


\noindent
The cases $n=5$ and $n=6$ are due to Lakshmanan S. and
Manattu~\cite{LakshmananManattu2025}; the cases $7\le n\le13$ are new.

\begin{proof}
For each row we compute the four classes of \eqref{eq:wheel-master}. It suffices to check that the result is exactly $[N]$, with no value repeated. Appendix~\ref{app:partitions} records the three geodesic weight classes for each $n$.

In the smallest new case $W_7$ we have $N=27$, and the weights of the one-edge geodesics, of the two-rim-edge geodesics, and of the hub-type two-edge geodesics are, respectively,
\begin{align*}
&\set{1,2,3,4,5,7,8,10,14,15,17,22},\\
&\set{11,13,16,20,23,25},\\
&\set{6,9,12,18,19,21,24,26,27}.
\end{align*}
These three sets are pairwise disjoint and their union is $[27]$. The other rows are verified in the same way from the weight classes in Appendix~\ref{app:partitions}.
\end{proof}

\begin{table}[ht]
\centering
\caption{Geodesic Leech labelings of $W_5,\ldots,W_{13}$. The rows $W_5$ and
$W_6$ are those of~\cite[Figure~3]{LakshmananManattu2025}; the remaining rows are
new.}\label{tab:certificates}
\begin{tabular}{@{}lll@{}}
\toprule
Wheel & Spoke labels $A$ & Rim labels $B$\\
\midrule
$W_5$ & $(6,2,7,3)$ & $(1,10,4,8)$\\
$W_6$ & $(14,7,6,5,11)$ & $(1,3,13,2,8)$\\
$W_7$ & $(4,5,2,14,22,7)$ & $(15,8,17,3,10,1)$\\
$W_8$ & $(9,6,7,21,13,8,5)$ & $(10,24,4,31,1,2,23)$\\
$W_9$ & $(5,8,1,13,34,28,9,3)$ & $(12,20,24,2,25,15,23,7)$\\
$W_{10}$ & $(15,4,19,39,29,1,7,13,2)$ & $(8,10,35,12,25,24,27,23,30)$\\
$W_{11}$ & $(1,32,33,16,22,9,27,4,29,31)$ & $(2,44,21,18,6,8,3,12,7,50)$\\
$W_{12}$ & $(10,31,17,22,26,13,38,39,2,37,3)$ & $(1,8,66,11,7,14,58,4,67,6,45)$\\
$W_{13}$ & $(25,10,36,16,30,44,2,42,3,43,46,21)$ & $(4,7,15,74,1,34,50,20,9,8,6,77)$\\
\bottomrule
\end{tabular}
\end{table}

\begin{figure}[ht]
\centering
\begin{tikzpicture}[scale=1.45,
    vtx/.style={circle,fill,inner sep=1.6pt},
    el/.style={midway,fill=white,inner sep=1.3pt,font=\small}]
  \node[vtx,label={[label distance=3pt]0:{$u$}}] (u) at (0,0) {};
  \node[vtx,label={above:{$v_0$}}]     (v0) at (90:2)    {};
  \node[vtx,label={right:{$v_1$}}]     (v1) at (30:2)    {};
  \node[vtx,label={right:{$v_2$}}]     (v2) at (-30:2)   {};
  \node[vtx,label={below:{$v_3$}}]     (v3) at (-90:2)   {};
  \node[vtx,label={left:{$v_4$}}]      (v4) at (-150:2)  {};
  \node[vtx,label={left:{$v_5$}}]      (v5) at (150:2)   {};
  \foreach \i/\a in {0/4,1/5,2/2,3/14,4/22,5/7}
    \draw (u) -- (v\i) node[el] {$\a$};
  \foreach \i/\j/\b in {0/1/15,1/2/8,2/3/17,3/4/3,4/5/10,5/0/1}
    \draw (v\i) -- (v\j) node[el] {$\b$};
\end{tikzpicture}
\caption{The geodesic Leech labeling of $W_7$ in the first new row of
Table~\ref{tab:certificates}: the spoke $uv_i$ carries $a_i$ and the rim edge
$v_iv_{i+1}$ carries $b_i$, indices modulo $6$. The $27$ geodesic weights are
$1,2,\ldots,27$, each occurring once.}\label{fig:w7}
\end{figure}

Figure~\ref{fig:w7} shows the labeling of $W_7$, the case left open in~\cite[Figure~4]{LakshmananManattu2025}.

\begin{proof}[Proof of Corollary~\ref{cor:range}]
Theorem~\ref{thm:explicit} gives $\set{5,6,\ldots,13}\subseteq\calE$, and
Theorem~\ref{thm:main} gives $\calE\subseteq\set{5,6,\ldots,40}$.
\end{proof}

\section{Conclusion and open problems}\label{sec:conclusion}

The range obtained in this paper is
\[
  \set{5,6,\ldots,13}\subseteq\calE
  \subseteq\set{5,6,\ldots,40}.
\]
The left inclusion is proved by the explicit labelings of Theorem~\ref{thm:explicit}, and the right inclusion by the finite Fourier argument of Sections~\ref{sec:strategy}--\ref{sec:boundary}. Moreover, the frame-completion criterion (Proposition~\ref{thm:frame-completion}) decomposes the search over the remaining cases exactly into the choice of an admissible spoke frame and a cyclic completion by the $2m$ residual integers, and the arithmetic constraints of Appendix~\ref{app:identities} reduce that search space further.

\subsection*{What forces the bound \texorpdfstring{$40$}{40}}

The bound $40$ of Theorem~\ref{thm:main} is the threshold of a single
comparison, and we record which one, since it determines what a sharpening
would have to improve. In the case $h=0$ of Section~\ref{ssec:h0} the kernel sum $\Lambda$ is
bounded below by $\mathcal L_0(m)=m+1-2(m+1)/(m(m+3))$ and above by
$\mathcal U_0$ of \eqref{eq:U0}, and $\mathcal U_0(m)=c_{\mathcal U}\sqrt m+O(1)$ with
\[
  c_{\mathcal U}=\sqrt6\left(\frac{355}{226}+\frac23\right)+\frac{\sqrt7}3=6.3625\ldots
\]
(the same expression with $\pi$ in place of $355/113$ differs by less than
$10^{-6}$ and shares every digit displayed here). The two leading terms $m+1$
and $c_{\mathcal U}\sqrt m$ cross at
\[
  m=\left(\frac{c_{\mathcal U}+\sqrt{c_{\mathcal U}^2-4}}2\right)^{\!2}=38.456\ldots,
\]
and the exact margin $\mathcal L_0-\mathcal U_0$ is positive at $m=38$, though
only by $0.0230\ldots$. Proposition~\ref{prop:uniform-ranges}(i) is stated from
$m=40$ because that is where the hypotheses of
\eqref{eq:phase-pair-uniform}, and with them the upper bound \eqref{eq:U0}, are
verified: at $m=38$ one has $\Gamma=6m+2=230<242$.

Neither the interval-occupancy bound of Proposition~\ref{lem:rho-closed} nor the
localization of the large spokes in Section~\ref{sec:support-localization}
enters that comparison. What they control are the phase losses in the
cases of one and two large spokes, which amount to $1.27\ldots$ and
$2.26\ldots$ at $m=41$; against a margin growing by about $\tfrac12$ per unit of
$m$, such losses move the threshold, and the corresponding margins turn positive
only at $m=41$ and at $m=42$. The six-variable Parseval argument of
Section~\ref{sec:boundary} then removes the three cases $(m,h)=(40,1)$,
$(40,2)$, $(41,2)$ that are left over. Sharpening Proposition~\ref{lem:rho-closed} or
Section~\ref{sec:support-localization} alone would accordingly not lower the
bound of Theorem~\ref{thm:main}.

Sharpening $\mathcal L_0$ or $\mathcal U_0$ would improve
Proposition~\ref{prop:uniform-ranges}(i), but by itself it would not lower the
bound either, because the cases $(m,h)=(39,1),(39,2)$ would still have to be
excluded, and the argument of Section~\ref{sec:boundary} requires the modulus $M=(m+1)(m+2)/2$ to be odd, which happens exactly
for $m\equiv0,1\pmod4$. The three boundary cases lie at $m=40,41$, that is, at
$W_{41}$ and $W_{42}$, where $M=861$ and $M=903$; but $m=39$ gives $M=820$, for
which neither the summation to $(M-1)/2$ in \eqref{eq:parseval-q} nor the index
$\sigma_M$ is available as written. Lowering the bound by the present method
would therefore require both a sharper kernel comparison and a boundary argument
valid at even moduli. We have no evidence bearing on the true value of
$\max\calE$.

The cases $W_{14},\ldots,W_{40}$ remain undecided. The search of
Section~\ref{sec:certificates} found no labeling of $W_{14}$, and we have no
argument excluding one; since nonexistence at one order is not known to imply
nonexistence at larger orders, the remaining cases would have to be settled one
at a time, unless $\calE$ is an interval, which we cannot prove either. Whether
the bound $40$ can be lowered is discussed above; we have no evidence as to the
true value of $\max\calE$.

\section*{Data and code availability}
\phantomsection\addcontentsline{toc}{section}{Data and code availability}

The following are openly available in the companion repository at
\url{https://github.com/junyeobe0315/geodesic-leech-wheels}, whose tagged
releases are archived on Zenodo under the concept identifier
\href{https://doi.org/10.5281/zenodo.22254583}{doi:10.5281/zenodo.22254583};
the version corresponding to this article is \texttt{v1.0.4}.
\begin{itemize}[leftmargin=*]
\item The labelings of Table~\ref{tab:certificates} in machine-readable form.
\item Programs that verify them directly from the weighted graph, by enumerating
every shortest path of the weighted wheel and comparing the resulting weight
multiset against $[N]$, without appeal to
Proposition~\ref{prop:wheel-geodesics}.
\item A program that checks the weight classes of Appendix~\ref{app:partitions}
elementwise against the source of this article, so that a transcription error
would be detected.
\item A program that recomputes, in exact rational interval arithmetic with
outward rounding, every numerical comparison of
Sections~\ref{sec:fourier}--\ref{sec:boundary},
Appendix~\ref{app:kappa-iteration}, and Appendix~\ref{app:tangent}, including
the rational brackets for $\pi$, which it certifies from Machin's formula rather
than assuming.
\item An independent reimplementation of the frame search of
Section~\ref{sec:certificates} for the cases $W_{14},\ldots,W_{40}$, in
Python, together with a C++ port of the same search that was used for the
$W_{14}$ computation reported there, and a self-test mode in which the two
implementations can be compared on identical inputs.
\end{itemize}

\appendix
\section{Geodesic weight classes of the explicit labelings}\label{app:partitions}

For each row of Table~\ref{tab:certificates}, this appendix lists the values of \eqref{eq:wheel-master} grouped into three classes: $\mathcal S_1$ collects all one-edge geodesic weights, $\mathcal S_2$ the weights of two consecutive rim edges, and $\mathcal S_3$ the hub-type two-edge geodesic weights. In each case the three sets are pairwise disjoint and their union is $[N]$.

\subsection*{$W_5$ ($N=14$)}
\begin{align*}
\mathcal S_1={}&\{1,2,3,4,6,7,8,10\},\\
\mathcal S_2={}&\{9,11,12,14\},\\
\mathcal S_3={}&\{5,13\}.
\end{align*}

\subsection*{$W_6$ ($N=20$)}
\begin{align*}
\mathcal S_1={}&\{1,2,3,5,6,7,8,11,13,14\},\\
\mathcal S_2={}&\{4,9,10,15,16\},\\
\mathcal S_3={}&\{12,17,18,19,20\}.
\end{align*}

\subsection*{$W_7$ ($N=27$)}
\begin{align*}
\mathcal S_1={}&\{1,2,3,4,5,7,8,10,14,15,17,22\},\\
\mathcal S_2={}&\{11,13,16,20,23,25\},\\
\mathcal S_3={}&\{6,9,12,18,19,21,24,26,27\}.
\end{align*}

\subsection*{$W_8$ ($N=35$)}
\begin{align*}
\mathcal S_1={}&\{1,2,4,5,6,7,8,9,10,13,21,23,24,31\},\\
\mathcal S_2={}&\{3,25,28,32,33,34,35\},\\
\mathcal S_3={}&\{11,12,14,15,16,17,18,19,20,22,26,27,29,30\}.
\end{align*}

\subsection*{$W_9$ ($N=44$)}
\begin{align*}
\mathcal S_1={}&\{1,2,3,5,7,8,9,12,13,15,20,23,24,25,28,34\},\\
\mathcal S_2={}&\{19,26,27,30,32,38,40,44\},\\
\mathcal S_3={}&\{4,6,10,11,14,16,17,18,21,22,29,31,33,35,36,37,\\
&\qquad 39,41,42,43\}.
\end{align*}

\subsection*{$W_{10}$ ($N=54$)}
\begin{align*}
\mathcal S_1={}&\{1,2,4,7,8,10,12,13,15,19,23,24,25,27,29,30,35,39\},\\
\mathcal S_2={}&\{18,37,38,45,47,49,50,51,53\},\\
\mathcal S_3={}&\{3,5,6,9,11,14,16,17,20,21,22,26,28,31,32,33,34,36,\\
&\qquad40,41,42,43,44,46,48,52,54\}.
\end{align*}

\subsection*{$W_{11}$ ($N=65$)}
\begin{align*}
\mathcal S_1={}&\{1,2,3,4,6,7,8,9,12,16,18,21,22,27,29,31,32,33,44,50\},\\
\mathcal S_2={}&\{11,14,15,19,24,39,46,52,57,65\},\\
\mathcal S_3={}&\{5,10,13,17,20,23,25,26,28,30,34,35,36,37,38,40,41,42,\\
&\qquad43,45,47,48,49,51,53,54,55,56,58,59,60,61,62,63,64\}.
\end{align*}

\subsection*{$W_{12}$ ($N=77$)}
\begin{align*}
\mathcal S_1={}&\{1,2,3,4,6,7,8,10,11,13,14,17,22,26,31,37,38,39,45,58,66,67\},\\
\mathcal S_2={}&\{9,18,21,46,51,62,71,72,73,74,77\},\\
\mathcal S_3={}&\{5,12,15,16,19,20,23,24,25,27,28,29,30,32,33,34,35,36,40,41,\\
&\qquad42,43,44,47,48,49,50,52,53,54,55,56,57,59,60,61,63,64,65,68,\\
&\qquad69,70,75,76\}.
\end{align*}

\subsection*{$W_{13}$ ($N=90$)}
\begin{align*}
\mathcal S_1={}&\{1,2,3,4,6,7,8,9,10,15,16,20,21,25,30,34,36,42,43,44,46,50,74,77\},\\
\mathcal S_2={}&\{11,14,17,22,29,35,70,75,81,83,84,89\},\\
\mathcal S_3={}&\{5,12,13,18,19,23,24,26,27,28,31,32,33,37,38,39,40,41,45,47,\\
&\qquad48,49,51,52,53,54,55,56,57,58,59,60,61,62,63,64,65,66,67,68,\\
&\qquad69,71,72,73,76,78,79,80,82,85,86,87,88,90\}.
\end{align*}

\section{Arithmetic constraints for the remaining cases}\label{app:identities}

This appendix collects necessary conditions satisfied by any geodesic Leech labeling. They are independent of the upper-bound proof, and they are the constraints that underlie the search of Section~\ref{sec:certificates}: since $\sum_{t\in T(A)}t$ and $\sum_{x\in\Phi(A)}x$ depend only on the underlying set of spoke labels, the congruence $\sum_{t\in T(A)}t\equiv0\pmod3$ of~\eqref{eq:residual-sum} eliminates candidate frames before any completion is attempted, while the parity relation~\eqref{eq:parity}, once the frame is fixed, prescribes the value of $\nu_B+\chi_B$ for every completion. They are recorded here because they are the principal reduction of the search space available for the cases $W_{14},\ldots,W_{40}$ left open in Section~\ref{sec:conclusion}. For the spoke labels $A=(a_i)$, the rim labels $B=(b_i)$, the quantity $N=m(m+3)/2$, and the residual set $T(A)$ of Section~\ref{sec:frame}, the notation is as in the main text; the polynomials and quantities defined below are local to this appendix.

\subsection{Generating function identities}

Define the following polynomials.
\[
  P_A(z)=\sum_i z^{a_i},\qquad P_B(z)=\sum_i z^{b_i},
\]
\[
  C_A(z)=\sum_i z^{a_i+a_{i+1}},\qquad
  C_B(z)=\sum_i z^{b_i+b_{i+1}}.
\]

\begin{proposition}[Basic generating-function identity]\label{prop:master-generating}
A geodesic Leech labeling satisfies the following identity.
\begin{equation}\label{eq:master-generating}
  \sum_{r=1}^{N}z^r
  =P_A(z)+P_B(z)+C_B(z)
  +\frac{P_A(z)^2-P_A(z^2)}2-C_A(z).
\end{equation}
\end{proposition}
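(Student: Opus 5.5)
The identity \eqref{eq:master-generating} is the multiset statement \eqref{eq:wheel-master} rewritten as an equality of generating polynomials. The plan is to take the generating polynomial of each of the four geodesic classes of Proposition~\ref{prop:wheel-geodesics} and sum them. Since \eqref{eq:wheel-master} says their multiset union is exactly $[N]$, each value $1,\ldots,N$ occurring once, the sum of the four polynomials must equal $\sum_{r=1}^N z^r$. Here the multiset union corresponds to adding polynomials, with multiplicities recorded as coefficients.

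The four classes are handled in turn. The spokes contribute $\sum_i z^{a_i}=P_A(z)$. The rim edges contribute $P_B(z)$. The two-rim-edge geodesics $v_iv_{i+1}v_{i+2}$ contribute $\sum_i z^{b_i+b_{i+1}}=C_B(z)$; since $m\ge4$, these $m$ geodesics are distinct and indexed by $i\in\Z/m\Z$. For the hub-type class, I would expand as in the proof of Lemma~\ref{lem:support-identity}:
\[
  P_A(z)^2=\sum_{i,j}z^{a_i+a_j}=P_A(z^2)+2\sum_{i<j}z^{a_i+a_j}.
\]
Hence $\bigl(P_A(z)^2-P_A(z^2)\bigr)/2$ is the sum over all unordered pairs $\{i,j\}$ with $i\ne j$. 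From this I subtract the rim-adjacent pairs. These are exactly the $m$ edges $\{i,i+1\}$ of $C_m$, which are distinct pairs because $m\ge4$ (indeed $m\ge3$ suffices), and their total contribution is $C_A(z)$. What remains is $\sum_{\{i,j\}\notin E(C_m)}z^{a_i+a_j}$, the hub-type term.

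Adding the four contributions and invoking \eqref{eq:wheel-master} gives \eqref{eq:master-generating}. The only point needing care is the subtraction step: each adjacent pair must be counted exactly once in both $\sum_{i<j}$ and $C_A$, which holds because the cycle has no repeated edges when $m\ge3$. Nothing else is an obstacle; the argument is bookkeeping, and it does not need the distinctness of the $a_i$, only the indexing by vertices.
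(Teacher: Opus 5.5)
Your proposal is correct and follows the paper's proof essentially verbatim: you take the generating polynomial of each of the four geodesic classes, obtain the hub-type term as $\bigl(P_A(z)^2-P_A(z^2)\bigr)/2-C_A(z)$ by removing the rim-adjacent pairs, and conclude from \eqref{eq:wheel-master}. Your remarks that the $m$ cycle edges are distinct pairs and that distinctness of the $a_i$ is not needed are accurate details the paper leaves implicit.
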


\begin{proof}
The polynomials $P_A(z)$ and $P_B(z)$ represent the one-edge spoke and rim geodesics, respectively, and $C_B(z)$ represents the sums of two consecutive rim edges. On the other hand,
\[
  \frac{P_A(z)^2-P_A(z^2)}2
\]
counts the unordered sums of all distinct spoke pairs; among these, the pairs adjacent on the rim are not hub-type geodesics, so $C_A(z)$ is subtracted. By \eqref{eq:wheel-master}, the remaining exponents form exactly $1,\ldots,N$, each occurring once.
\end{proof}

Applying differentiation, sums of squares, or substitution of roots of unity to \eqref{eq:master-generating} recovers several of the conditions below.

\subsection{Sums and sums of squares}

\begin{proposition}\label{prop:moment-identities}
Let $A=(a_i)$ and $B=(b_i)$ be the spoke and rim labels of a geodesic Leech labeling. Then
\begin{equation}\label{eq:first-moment}
  (m-2)\sum_i a_i+3\sum_i b_i=\frac{N(N+1)}2,
\end{equation}
\begin{equation}\label{eq:residual-sum}
  \sum_{t\in T(A)}t=3\sum_i b_i,
\end{equation}
and
\begin{equation}\label{eq:second-moment}
\begin{aligned}
  \frac{N(N+1)(2N+1)}6
  ={}&\left(\sum_i a_i\right)^2
  +(m-3)\sum_i a_i^2
  -2\sum_i a_i a_{i+1}\\
  &+3\sum_i b_i^2
  +2\sum_i b_i b_{i+1}
\end{aligned}
\end{equation}
hold. In particular,
\[
  \sum_{t\in T(A)}t\equiv0\pmod3.
\]
\end{proposition}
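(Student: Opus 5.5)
The plan is to read all three identities off the multiset equation \eqref{eq:wheel-master}, equivalently off the generating-function identity \eqref{eq:master-generating} of Proposition~\ref{prop:master-generating}. For \eqref{eq:first-moment} we sum the values on both sides of \eqref{eq:wheel-master}. On the right this gives $N(N+1)/2$. On the left, the spokes contribute $\sum_i a_i$ and the rim edges $\sum_i b_i$. The consecutive rim pairs $b_i+b_{i+1}$ contribute $2\sum_i b_i$, since each $b_i$ occurs in exactly two cyclic pairs. For the hub-type sums we note that each index $i$ is paired with the $m-3$ indices $j$ that are neither $i$ nor rim-adjacent to $i$, so they contribute $(m-3)\sum_i a_i$. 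Adding up gives $(m-2)\sum a_i+3\sum b_i$. The same computation can be done by differentiating \eqref{eq:master-generating} at $z=1$, using $P_A(1)=m$, $P_A'(1)=\sum a_i$, $C_A'(1)=2\sum a_i$, and $\tfrac{d}{dz}\tfrac12(P_A(z)^2-P_A(z^2))|_{z=1}=(m-1)\sum a_i$.

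For \eqref{eq:residual-sum} we use Proposition~\ref{thm:frame-completion}. Since $T(A)$ is the multiset union \eqref{eq:completion}, its element sum is $\sum_i b_i+\sum_i(b_i+b_{i+1})=3\sum_i b_i$. The congruence $\sum_{t\in T(A)}t\equiv0\pmod3$ is then immediate.

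For \eqref{eq:second-moment} we sum the squares of both sides of \eqref{eq:wheel-master}. On the right this gives $N(N+1)(2N+1)/6$. The rim contributes
\[
\sum_i b_i^2+\sum_i(b_i+b_{i+1})^2=3\sum_i b_i^2+2\sum_i b_ib_{i+1}.
\]
The spoke part equals $\sum_i a_i^2+\sum_{i<j}(a_i+a_j)^2-\sum_i(a_i+a_{i+1})^2$, where the last sum removes the rim-adjacent pairs. Next, $\sum_{i<j}(a_i+a_j)^2=(m-1)\sum a_i^2+2\sum_{i<j}a_ia_j=(m-2)\sum a_i^2+(\sum a_i)^2$, and $\sum_i(a_i+a_{i+1})^2=2\sum a_i^2+2\sum a_ia_{i+1}$. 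Combining these gives $(\sum a_i)^2+(m-3)\sum a_i^2-2\sum a_ia_{i+1}$, which is the stated spoke part.

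None of the steps is a genuine obstacle. The only points needing care are the coefficient counts. Each $a_i$ has exactly $m-3$ admissible partners, because $m\ge4$ makes the two rim neighbours of $i$ distinct from each other. Each cyclic rim pair is counted once, so $\sum_i a_ia_{i+1}$ and $\sum_i b_ib_{i+1}$ run over the $m$ edges of $C_m$.
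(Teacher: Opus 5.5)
Your proposal is correct and follows the paper's proof essentially step by step. You obtain \eqref{eq:first-moment} and \eqref{eq:second-moment} by summing the values and their squares over both sides of \eqref{eq:wheel-master}, with the same coefficient counts ($m-3$ nonadjacent partners per spoke, the difference of the all-pairs and rim-adjacent square sums), and you read \eqref{eq:residual-sum} off the completion identity \eqref{eq:completion} supplied by Proposition~\ref{thm:frame-completion}.
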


\begin{proof}
The sum of all geodesic weights is $N(N+1)/2$. A fixed spoke label $a_i$ appears once by itself and $m-3$ times in sums with nonadjacent spokes, for a total coefficient of $m-2$. A fixed rim label $b_i$ appears once by itself and twice in the two consecutive rim sums, for a total coefficient of $3$. This gives \eqref{eq:first-moment}.

From Proposition~\ref{thm:frame-completion},
\[
  \sum_{t\in T(A)}t
  =\sum_i b_i+\sum_i(b_i+b_{i+1})
  =3\sum_i b_i,
\]
so \eqref{eq:residual-sum} and the congruence follow.

The left-hand side of the square-sum identity is $\sum_{r=1}^N r^2$. Expand the spoke part
\[
  \sum_i a_i^2+
  \sum_{\{i,j\}\notin E(C_m)}(a_i+a_j)^2
\]
and the rim part
\[
  \sum_i b_i^2+
  \sum_i(b_i+b_{i+1})^2.
\]
For the spoke part,
\[
  \sum_{i<j}(a_i+a_j)^2=(m-2)\sum_ia_i^2+\Bigl(\sum_ia_i\Bigr)^2,
  \qquad
  \sum_i(a_i+a_{i+1})^2=2\sum_ia_i^2+2\sum_ia_ia_{i+1},
\]
and the sum over the pairs $\{i,j\}\notin E(C_m)$ is the difference of the two;
adding the singleton term $\sum_ia_i^2$ turns the coefficient $m-4$ of
$\sum_ia_i^2$ into $m-3$, so that
\[
  \sum_ia_i^2+\sum_{\{i,j\}\notin E(C_m)}(a_i+a_j)^2
  =\Bigl(\sum_ia_i\Bigr)^2+(m-3)\sum_ia_i^2-2\sum_ia_ia_{i+1},
\]
which is the first line of \eqref{eq:second-moment}. The rim expansion
$\sum_ib_i^2+\sum_i(b_i+b_{i+1})^2=3\sum_ib_i^2+2\sum_ib_ib_{i+1}$ yields the
second.
\end{proof}

\subsection{Parity conditions}

Let $\nu_A$ be the number of odd spoke labels and $\nu_B$ the number of odd rim labels. Let $\chi_A$ and $\chi_B$ denote the numbers of adjacent pairs at which the parity changes in the two cyclic sequences, respectively.

\begin{proposition}\label{prop:parity}
A geodesic Leech labeling satisfies
\begin{equation}\label{eq:parity}
  \nu_A+\nu_B+\nu_A(m-\nu_A)-\chi_A+\chi_B=\ceil{N/2}.
\end{equation}
Moreover, $\chi_A$ and $\chi_B$ are even.
\end{proposition}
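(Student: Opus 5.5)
The plan is to reduce \eqref{eq:wheel-master} modulo $2$ and count odd elements on both sides. The right-hand side $[N]$ has exactly $\ceil{N/2}$ odd elements. Because \eqref{eq:wheel-master} is an identity of multisets, the left-hand side must contain the same number of odd values. It therefore suffices to count the odd values in each of the four classes of Proposition~\ref{prop:wheel-geodesics} separately and add the four counts.

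The four counts go as follows.
\begin{enumerate}[(i)]
\item The class $\set{a_i}$ contributes $\nu_A$ odd values by definition.
\item The class $\set{b_i}$ contributes $\nu_B$.
\item A consecutive rim sum $b_i+b_{i+1}$ is odd exactly when $b_i$ and $b_{i+1}$ have different parities. Hence the class $\set{b_i+b_{i+1}}$ contributes exactly $\chi_B$ odd values, since $\chi_B$ counts the indices $i\in\Z/m\Z$ at which the parity changes.
\item For the hub-type class, first count all unordered pairs $\{i,j\}$ with $a_i+a_j$ odd. These are the pairs with one odd and one even label, so there are $\nu_A(m-\nu_A)$ of them. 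Among these, the rim-adjacent pairs $\{i,i+1\}$ are exactly the $\chi_A$ parity changes of the cyclic sequence $A$. Since $m\ge4$, the $m$ rim edges are distinct pairs, so each parity change is counted once. Removing them leaves $\nu_A(m-\nu_A)-\chi_A$ odd hub-type weights.
\end{enumerate}
Summing (i)--(iv) and equating the total with $\ceil{N/2}$ gives \eqref{eq:parity}.

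For the evenness of $\chi_A$ and $\chi_B$, consider walking once around the cycle $v_0,v_1,\ldots,v_{m-1},v_0$. The parity returns to its starting value, so the number of changes is even. Formally, $\chi_A\equiv\sum_i(a_i+a_{i+1})=2\sum_i a_i\equiv0\pmod 2$, because each changing index contributes an odd summand and each other index an even one. The same computation applies to $B$.

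There is no real obstacle here. The only point needing care is in (iv): one must check that the adjacent pairs subtracted are exactly the parity-changing edges of the cycle, with no double counting. This is where $m\ge4$ enters, since it makes the $m$ rim edges distinct unordered pairs.
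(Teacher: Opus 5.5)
Your proposal is correct and is essentially the paper's own proof: count the odd values in each of the four geodesic classes ($\nu_A$, $\nu_B$, $\chi_B$, and $\nu_A(m-\nu_A)-\chi_A$) and equate the total with the $\ceil{N/2}$ odd elements of $[N]$. The evenness argument is the same observation as the paper's, that a cyclic parity sequence returns to its start; your congruence $\chi_A\equiv 2\sum_i a_i\equiv 0 \pmod 2$ just writes it out explicitly.
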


\begin{proof}
Among the one-edge geodesics there are $\nu_A+\nu_B$ odd weights. Among all distinct spoke pairs, $\nu_A(m-\nu_A)$ produce an odd sum, but $\chi_A$ of them are adjacent spoke pairs of different parity and hence not hub-type geodesics, so they are subtracted. Among the consecutive rim sums, $\chi_B$ are odd. Since $[N]$ contains $\ceil{N/2}$ odd numbers, \eqref{eq:parity} holds. A cyclic binary sequence must return to its initial value, so the number of value changes is even.
\end{proof}

\section{Fixed-point iteration at the boundary values}\label{app:kappa-iteration}

This appendix exhibits the iteration that produces
Table~\ref{tab:kappa-maxima}, so that its entries can be read off at the
critical values of $d$ without repeating the full computation. Throughout,
$m\in\set{40,41,42}$ and the notation is that of
Section~\ref{sec:support-localization}.

Fix $m$ and fix the difference $d\ge1$. Put $\kappa_0=5m-7$, the a priori bound
established before Lemma~\ref{lem:two-large-position}, and define
\begin{equation}\label{eq:kappa-iterate}
  \kappa_{j+1}=\floor*{\frac{3m}{2}}+\mu(d)+2\mu(\kappa_j).
\end{equation}
Since $\kappa\le\kappa_0$, inequality \eqref{eq:z-fixedpoint} together with the
monotonicity \eqref{eq:rho-monotone} of $\mu$ gives $\kappa\le\kappa_j$ for every
$j$ by induction, and only these inequalities are used. In every case computed
the sequence is nonincreasing and becomes constant after at most four steps; the
first value that repeats is the quantity denoted $\kappa(d)$ in
Lemma~\ref{lem:two-large-position}. Here $\mu(\kappa_j)$
is evaluated from \eqref{eq:rho-sharp}, that is, $\mu(w)$ is the largest integer
$\ell$ with $(1+\lambda_w^2)\ell^2-(5+\lambda_w)\ell+6-4w\le0$ at $w=\kappa_j$, every
argument occurring below being at least $103$; the value $\mu(d)$ is taken from
\eqref{eq:rho-small} for $d\le9$ and is the smaller of \eqref{eq:rho-elementary}
and \eqref{eq:rho-sharp} for $d\ge10$, the two agreeing at the values of $d$
displayed. Finally, $d$ is retained only while the accompanying constraint
$d<\kappa/2$ of \eqref{eq:z-fixedpoint} is still satisfiable, that is, only while
$2d<\kappa(d)$.

\begin{table}[ht]
\centering
\caption{The iteration \eqref{eq:kappa-iterate} at the critical values
of $d$. The last two columns are the quantities maximized in
Table~\ref{tab:kappa-maxima}. A dagger marks a value of $d$ that is discarded
because $2d\ge\kappa(d)$.}\label{tab:kappa-iterates}
\begin{tabular}{ccccccc}
\toprule
$m$ & $d$ & $\mu(d)$ & $\kappa_0,\kappa_1,\ldots,\kappa(d)$
& $\mu(\kappa(d))$ & $\kappa(d)$ & $\kappa(d)-d-1$\\
\midrule
$40$ & $1$ & $1$ & $193,\,117,\,105,\,103$ & $21$ & $103$ & $101$\\
$40$ & $5$ & $5$ & $193,\,121,\,109,\,107$ & $21$ & $107$ & $101$\\
$40$ & $54$ & $16$ & $193,\,132,\,122,\,120$ & $22$ & $120$ & $65$\\
$40$ & $62^{\dagger}$ & $17$ & $193,\,133,\,123$ & $23$ & $123$ & $60$\\
\midrule
$41$ & $1$ & $1$ & $198,\,118,\,106,\,104$ & $21$ & $104$ & $102$\\
$41$ & $5$ & $5$ & $198,\,122,\,110,\,108$ & $21$ & $108$ & $102$\\
$41$ & $54$ & $16$ & $198,\,133,\,123$ & $23$ & $123$ & $68$\\
$41$ & $62^{\dagger}$ & $17$ & $198,\,134,\,124$ & $23$ & $124$ & $61$\\
\midrule
$42$ & $1$ & $1$ & $203,\,120,\,108,\,106$ & $21$ & $106$ & $104$\\
$42$ & $5$ & $5$ & $203,\,124,\,114,\,112$ & $22$ & $112$ & $106$\\
$42$ & $54$ & $16$ & $203,\,135,\,127,\,125$ & $23$ & $125$ & $70$\\
$42$ & $62$ & $17$ & $203,\,136,\,128,\,126$ & $23$ & $126$ & $63$\\
\bottomrule
\end{tabular}
\end{table}

The final entry $\kappa(d)$ of each row of Table~\ref{tab:kappa-iterates} is a
fixed point of \eqref{eq:kappa-iterate}: for instance the third row
reads $60+16+2\cdot22=120$, and the last reads $63+17+2\cdot23=126$. The two rows
marked with a dagger are discarded, since there $2d\ge\kappa(d)$, namely
$124>123$ at $m=40$ and $124=124$ at $m=41$; this excludes the large values of
$d$. The same $d=62$ survives at $m=42$, where $124<126$, and it is there that
the first maximum is attained.

Carrying the iteration over all $d$ leaves the admissible ranges $1\le d\le59$
at $m=40$, $1\le d\le61$ at $m=41$, and $1\le d\le62$ at $m=42$. On these
ranges, $\max_d\kappa(d)$ equals $120$, attained exactly for $54\le d\le59$;
$123$, attained exactly for $54\le d\le61$; and $126$, attained only at $d=62$.
Likewise $\max_d\bigl(\kappa(d)-d-1\bigr)$ equals $101$, attained exactly for
$1\le d\le5$; $102$, attained exactly for $d\in\set{1,2,3,4,5,9}$; and $106$,
attained only at $d=5$. These six maxima are the entries of
Table~\ref{tab:kappa-maxima}, and the four values of $d$ displayed above were
chosen so that each of them is attained in a displayed row. The entries of both
tables were computed by a program, in integer arithmetic throughout, for every
admissible $d$ and not only for the twelve rows above.

At $m=40$ the second maximum equals $\Pi_{40}=101$ and at $m=42$ it equals
$\Pi_{42}=106$, so \eqref{eq:p-Pm} is attained with equality at two of the three
boundary values; only at $m=41$ is there slack, the maximum $102$ being one less
than $\Pi_{41}=103$. The equality at $m=40$ is used without slack. In the case
$(m,h)=(40,2)$ of Proposition~\ref{lem:boundary} the
lower-bound expression of Table~\ref{tab:lambda-lower} equals
$38.68891\ldots$, exceeding the upper bound $38.682$ of
Table~\ref{tab:tangent-bounds} by less than $0.007$, whereas replacing $101$ by
$102$ in that expression lowers it to $38.66754\ldots$, below $38.682$. The
comparison of Proposition~\ref{lem:boundary} would therefore fail at $(40,2)$ if
\eqref{eq:p-Pm} were weakened by one at $m=40$.

\section{Supporting values for the tangent planes}\label{app:tangent}

This appendix records the data behind the tangent-plane table of
Section~\ref{sec:boundary}: the closed-form partial derivatives of $\Psi$
and the intermediate values at the three tangent points. With these, every
entry of the table reduces to finitely many arithmetic operations and square
roots, each certifiable by squaring both sides as in the main text. The
intermediate values below are displayed rounded to five decimal places, hence
to within $10^{-5}$; they are given to identify the quantities entering the
derivative formulas, not as the inputs of a calculation. The gradient entries
that follow are obtained by evaluating those formulas at the exact rational
tangent point, so the small components $g_4,g_5,g_6$ are not limited by the
display precision of Table~\ref{tab:app-values}. Nothing in the proof of
Proposition~\ref{lem:boundary} rests on the displayed digits: the statements on which
the proof rests are the rational inequalities tabulated in
Section~\ref{sec:boundary}, each verified by squaring both sides. As in
Appendix~\ref{app:identities}, the abbreviations introduced here are local
to this appendix.

\subsection*{Partial derivatives of \texorpdfstring{$\Psi$}{Psi}}

Fix $(m,h)$ and abbreviate, at a point $x=(q_1,q_2,q_4,q_6,q_8,q_{12})$,
\[
  w_4=\sqrt{2q_4+2m+2+U_m^*},
  \qquad
  v_2=v_2(x),
  \qquad
  \Theta=2q_1+2\Db_1-v_2,
\]
\[
  \cV^{(1)}=\cV_m(q_4,q_8),\quad
  \cV^{(2)}=\cV_m(q_6,q_{12}),\quad
  \cV^{(3)}=\cV_m(\cX_m(x),\cY_m(x)),
\]
\[
  w_8=\sqrt{2q_8+2m+2+U_m^*},\qquad
  w_{12}=\sqrt{2q_{12}+2m+2+U_m^*},
\]
\[
  w_{\cY}=\sqrt{2\cY_m(x)+2m+2+U_m^*},
\]
and write $\eta=\eta(x)$. From \eqref{eq:X-tail}--\eqref{eq:Y-tail},
\[
  \frac{\partial \cX_m}{\partial q_j}
  =\frac{\hat c_j^{(2)}-\frac{267}{10000}\,q_j/\eta}{\omega_m},
  \qquad
  \frac{\partial \cY_m}{\partial q_j}
  =\frac{\hat c_j^{(4)}-\frac{219}{5000}\,q_j/\eta}{\omega_m},
\]
where $\hat c_j^{(2)}$ is the coefficient of $q_j$ in the numerator of $\cX_m$ (namely
$c_R$, $c_4$, $c_6$ for $q_1$, $q_8$, $q_{12}$, and $0$ otherwise) and
$\hat c_j^{(4)}$ is the analogous coefficient of $\cY_m$ (namely $c_{\sigma_M}$, $c_R$,
$c_{R-1}$ for $q_1$, $q_2$, $q_6$, and $0$ otherwise). Differentiating \eqref{eq:Psi} by the
chain rule then gives
\begin{align*}
  \frac{\partial\Psi}{\partial q_1}
  &=\frac{\pi_+}{2\sqrt\Theta}
    +\omega_m\left[
      \frac1{\cV^{(3)}}\frac{\partial \cX_m}{\partial q_1}
      +\frac1{2\cV^{(3)}w_{\cY}}\frac{\partial \cY_m}{\partial q_1}
    \right],\\
  \frac{\partial\Psi}{\partial q_2}
  &=\left(\frac23-\frac{\pi_+}{4\sqrt\Theta}\right)\frac1{v_2}
    +\omega_m\left[
      \frac1{\cV^{(3)}}\frac{\partial \cX_m}{\partial q_2}
      +\frac1{2\cV^{(3)}w_{\cY}}\frac{\partial \cY_m}{\partial q_2}
    \right],\\
  \frac{\partial\Psi}{\partial q_4}
  &=\left(\frac23-\frac{\pi_+}{4\sqrt\Theta}\right)\frac1{2v_2w_4}
    +\frac{c_2}{\cV^{(1)}}
    +\omega_m\left[
      \frac1{\cV^{(3)}}\frac{\partial \cX_m}{\partial q_4}
      +\frac1{2\cV^{(3)}w_{\cY}}\frac{\partial \cY_m}{\partial q_4}
    \right],\\
  \frac{\partial\Psi}{\partial q_6}
  &=\frac{c_3}{\cV^{(2)}}
    +\omega_m\left[
      \frac1{\cV^{(3)}}\frac{\partial \cX_m}{\partial q_6}
      +\frac1{2\cV^{(3)}w_{\cY}}\frac{\partial \cY_m}{\partial q_6}
    \right],\\
  \frac{\partial\Psi}{\partial q_8}
  &=\frac{c_2}{2\cV^{(1)}w_8}
    +\omega_m\left[
      \frac1{\cV^{(3)}}\frac{\partial \cX_m}{\partial q_8}
      +\frac1{2\cV^{(3)}w_{\cY}}\frac{\partial \cY_m}{\partial q_8}
    \right],\\
  \frac{\partial\Psi}{\partial q_{12}}
  &=\frac{c_3}{2\cV^{(2)}w_{12}}
    +\omega_m\left[
      \frac1{\cV^{(3)}}\frac{\partial \cX_m}{\partial q_{12}}
      +\frac1{2\cV^{(3)}w_{\cY}}\frac{\partial \cY_m}{\partial q_{12}}
    \right].
\end{align*}

\subsection*{Constants}

For $m=40$: $M=861$, $R=430$, $\sigma_M=215$, $E_{40}=31240$,
$U_{40}^*=\frac{1+\sqrt{969}}2=16.06438\ldots$,
$\omega_{40}=\frac{122}{861}=0.141695\ldots$, and
\[
  c_{430}=\frac{2}{739599},\qquad
  c_{429}=\frac{2}{736163},\qquad
  c_{215}=\frac{2}{184899}.
\]
For $m=41$: $M=903$, $R=451$, $\sigma_M=226$, $E_{41}=33661$,
$U_{41}^*=\frac{1+\sqrt{993}}2=16.25595\ldots$,
$\omega_{41}=\frac{128}{903}=0.141749\ldots$, and
\[
  c_{451}=\frac{2}{813603},\qquad
  c_{450}=\frac{2}{809999},\qquad
  c_{226}=\frac{2}{204303}.
\]
The pairs $(\Db_1,\Db_2)$ are $(\frac{157}4,\frac{81}2)$ for $(40,2)$,
$(41,41)$ for $(40,1)$, and $(\frac{161}4,\frac{83}2)$ for $(41,2)$.

\subsection*{Intermediate values at the tangent points}

\begin{table}[ht]
\centering
\caption{Intermediate values at the three tangent points, rounded to five
decimal places.}\label{tab:app-values}
\begin{tabular}{lccc}
\toprule
$(m,h)$ & $(40,2)$ & $(40,1)$ & $(41,2)$\\
\midrule
$\eta$        & $49.44534$ & $49.44468$ & $52.25463$\\
$\cX_m$         & $25.58936$ & $25.59767$ & $27.04591$\\
$\cY_m$         & $15.29318$ & $15.29297$ & $16.15482$\\
$w_4$         & $15.99326$ & $15.99326$ & $16.18752$\\
$v_2$         & $15.95974$ & $15.99104$ & $16.15449$\\
$\sqrt\Theta$ & $14.84117$ & $14.95757$ & $15.03747$\\
$\cV^{(1)}$         & $15.95401$ & $15.95405$ & $16.15228$\\
$\cV^{(2)}$         & $15.67251$ & $15.67070$ & $16.00200$\\
$w_{\cY}$         & $11.34243$ & $11.34241$ & $11.51371$\\
$\cV^{(3)}$           & $12.02170$ & $12.02239$ & $12.23133$\\
$\Psi(x_0)$   & $38.67848$ & $38.88219$ & $39.19229$\\
\bottomrule
\end{tabular}
\end{table}

Substituting these values into the derivative formulas above yields the gradients
recorded in Table~\ref{tab:app-gradients}.
\begin{table}[ht]
\centering
\caption{The gradient $g=\nabla\Psi$ at the three tangent
points.}\label{tab:app-gradients}
\begin{tabular}{lccc}
\toprule
$(m,h)$ & $(40,2)$ & $(40,1)$ & $(41,2)$\\
\midrule
$g_1$ & $0.102042$ & $0.101219$ & $0.100839$\\
$g_2$ & $0.034658$ & $0.034608$ & $0.034415$\\
$g_3$ & $0.005761$ & $0.005760$ & $0.005810$\\
$g_4$ & $2.77\cdot10^{-6}$ & $4.80\cdot10^{-6}$ & $2.13\cdot10^{-6}$\\
$g_5$ & $2.08\cdot10^{-6}$ & $1.06\cdot10^{-6}$ & $-0.90\cdot10^{-6}$\\
$g_6$ & $0.92\cdot10^{-6}$ & $-1.04\cdot10^{-6}$ & $-3.25\cdot10^{-6}$\\
\bottomrule
\end{tabular}
\end{table}
From these one obtains, for the three cases in the order $(40,2)$, $(40,1)$,
$(41,2)$,
\[
  A_0=\Psi(x_0)-g\cdot x_0<27.44363,\ 27.71625,\ 27.78160,
\]
\[
  g_1+g_2+g_3<0.142462,\ 0.141587,\ 0.141065,
\]
\[
  \|(g_4,g_5,g_6)\|_2<3.59\cdot10^{-6},\ 5.03\cdot10^{-6},\ 3.99\cdot10^{-6},
\]
in agreement with the rational bounds of Table~\ref{tab:tangent-bounds}. Each of
these nine decimals is an exact rational upper bound, obtained by rounding the
computed value up in the last digit.

\end{document}